\definecolor{dkgreen}{rgb}{0,0.6,0}
\definecolor{gray}{rgb}{0.5,0.5,0.5}
\definecolor{mauve}{rgb}{0.58,0,0.82}
\tiny\color{gray},
\newtheorem{theorem}{Theorem}[section]
\newtheorem{lemma}[theorem]{Lemma}
\newtheorem{proposition}[theorem]{Proposition}
\newtheorem{corollary}[theorem]{Corollary}
\newtheorem{conjecture}[theorem]{Conjecture}
\theoremstyle{definition}
\newtheorem{algorithm}[theorem]{Algorithm}
\newtheorem{definition}[theorem]{Definition}
\newtheorem{construction}[theorem]{Construction}
\newtheorem{example}[theorem]{Example}
\theoremstyle{remark}
\newtheorem{remark}[theorem]{Remark}
\numberwithin{equation}{section}
\numberwithin{figure}{section}
\newcommand{\ZZ} {\mathbb{Z}}
\newcommand{\RR} {\mathbb{R}}
\newcommand{\PP} {\mathbb{P}}
\renewcommand{\AA} {\mathbb{A}}
\newcommand {\shF}  {\mathcal{F}}
\newcommand {\shG}  {\mathcal{G}}
\newcommand {\shH}  {\mathcal{H}}
\newcommand {\shL}  {\mathcal{L}}
\newcommand {\shV}  {\mathcal{V}}
\newcommand {\shX}  {\mathcal{X}}
\newcommand {\Aff}  {\operatorname{Aff}}
\newcommand {\codim} {\operatorname{codim}}
\newcommand {\coker} {\operatorname{coker}}
\renewcommand{\emptyset}{\varnothing}
\newcommand {\Ext}  {\operatorname{Ext}}
\newcommand {\GL}  {\operatorname{GL}}
\newcommand {\Hom}  {\operatorname{Hom}}
\newcommand {\Int}  {\operatorname{Int}}
\newcommand {\lra}  {\longrightarrow}
\renewcommand{\O}  {\mathcal{O}}
\newcommand {\rank} {\operatorname{rank}}
\newcommand {\Square}  {\operatorname{Sq}}
\renewcommand {\tilde} {\widetilde}
\newcommand {\U} {\mathcal{U}}
\def\mydate{\ifcase\month \or January\or February\or March\or
April\or May\or June\or July\or August\or September\or October\or 
November\or December\fi \space\number\day,\space\number\year}
\newcommand\restr[2]{{
  \left.\kern-\nulldelimiterspace 
  #1 
  \vphantom{\big|} 
  \right|_{#2} 
  }}
\begin{document}

\title[Real Lagrangian]{On the cohomology groups of real Lagrangians in Calabi--Yau threefolds}

\author{H\"ulya Arg\"uz}
\address{Laboratoire de Math\'ematiques, Universit\'e de
Versailles St Quentin en Yvelines, France}
\email{nuromur-hulya.arguz@uvsq.fr}

\author{Thomas Prince}
\address{Mathematical Institute\\University of Oxford\\Woodstock Road\\OX$2$ $6$GG\\UK}
\email{thomas.prince@magd.ox.ac.uk}

\date{\today}

\begin{abstract}
The quintic threefold $X$ is the most studied Calabi-Yau $3$-fold in the mathematics literature. In this paper, using  \u{C}ech-to-derived spectral sequences, we investigate the mod $2$ and integral cohomology groups of a real Lagrangian $\breve{L}_\RR$, obtained as the fixed locus of an anti-symplectic involution in the mirror to $X$. We show that $\breve{L}_\RR$ is the disjoint union of a $3$-sphere and a rational homology sphere. Analysing the mod $2$ cohomology further, we deduce a correspondence between the mod $2$ Betti numbers of $\breve{L}_\RR$ and certain counts of integral points on the base of a singular torus fibration on $X$. By work of Batyrev, this identifies the mod $2$ Betti numbers of $\breve{L}_\RR$ with certain Hodge numbers of $X$. Furthermore, we show that the integral cohomology groups $H^j(\breve{L}_\RR,\ZZ)$ of $\breve{L}_\RR$ are $2$-primary for $j \neq 0,3$; we conjecture that this holds in much greater generality.

\end{abstract}
\maketitle

\tableofcontents

\section{Introduction}
\label{sec:intro}
Mirror symmetry proposes the existence of mirror pairs of
Calabi--Yau threefolds $(X,\breve{X})$, which fulfil the Hodge theoretic relationship
\begin{equation}
\label{Eq: Hodge theoretic}    
H^q(X,\Omega^p_X) \cong H^q(\breve{X},\Omega^{n-p}_{\breve{X}}). 
\end{equation}
The mirror correspondence of Batyrev--Borisov \cite{BB} constructs mirror pairs $(X,\breve{X})$ as anti-canonical hypersurfaces in four-dimensional toric varieties defined by dual lattice polytopes. In this paper, we investigate a relationship between (mod $2$) Betti numbers of certain real Lagrangians $\breve{L}_\RR \subset \breve{X}$ and integral points in the lattice polytope $P$ that defines~$X$. In view of Batyrev's result~\cite{B} relating the Hodge numbers of $X$ to integral points in $P$, this suggests an intriguing correspondence between the mod $2$ topology of real Lagrangians in Calabi--Yau threefolds and Hodge numbers of the mirror Calabi--Yau.

In our setting, the choice of an integral affine structure with simple singularities on the boundary  
$B := \partial P$ of the lattice polytope $P$ specifies dual singular Lagrangian torus fibrations $f \colon X \to B$ and $\breve{f} \colon \breve{X} \to B$.   
These fibrations were constructed by Gross \cite{GrossTopology,GrossBB} and Casta\~{n}o-Bernard--Matessi \cite{CBM3}; they give topological versions of the structures predicted by the Strominger--Yau--Zaslow Conjecture~\cite{SYZ}. 
Note that in a Batyrev-Borisov mirror pair $(X, \breve{X})$, the mirror $\breve{X}$ is only well-defined up to birational 
modifications, but that the choice of the integral affine structure on $B$ leads to the construction of a specific 
torus fibration $f \colon X \to B$, and then, by compactification of the dual torus fibration, of a specific topological model $\breve{X}$ of the mirror.
The real Lagrangian $\breve{L}_\RR \subset \breve{X}$ that we consider is the fixed locus of an anti-symplectic involution of $\breve{X}$ given on smooth fibres of $\breve{f}$ by $x \mapsto -x$. This anti-symplectic involution 
was introduced in \cite{CBMS} and studied in 
\cite{CBM} .

We now restrict attention to the case in which $X$ is the quintic threefold hypersurface in 
$\mathbb{P}^4$. We will consider a specific integral affine manifold with simple singularities on 
$B$, the corresponding Lagrangian torus fibration 
$f \colon X \to B$ and the corresponding topological model $\breve{X}$ of the mirror, coming with the Lagrangian torus fibration $\breve{f} \colon \breve{X} \to B$. In earlier work \cite{AP19}, we showed that the real Lagrangian $\breve{L}_\RR \subset \breve{X}$ is the disjoint union of a $3$-sphere and a multi-section $\breve{\pi} \colon \breve{\shL_\RR} \to B$ of $\breve{f}$, and computed the mod $2$ Betti numbers:
\begin{equation}
\label{eq:Betti_numbers}
h^0(\breve{\mathcal{L}}_{\RR},\ZZ_2)=h^3(\breve{\mathcal{L}}_{\RR},\ZZ_2)=1, \,\ \,\ \,\    h^1(\breve{\mathcal{L}}_{\RR},\ZZ_2)=h^2(\breve{\mathcal{L}}_{\RR},\ZZ_2)=101.
\end{equation}
In \S\ref{sec:mod2_cohomology} below we use a \u{C}ech-to-derived spectral sequence which relates the mod $2$ cohomology groups of $\breve{L}_\RR$ to the set of integral points in the reflexive polytope $P$ that defines $X$.

\begin{theorem}[See Theorem~\ref{thm:cech_to_derived_Z2} below]
    \label{thm:cech_to_derived_Z2_in_intro}
	The \u{C}ech-to-derived spectral sequence, relative to the open cover $\U$ defined in Construction~\ref{def:coarse_cover}, for the sheaf $\breve{\pi}_\star\ZZ_2$ has the $E_2$ page
	\[
	\xymatrix@R-2pc{
		\ZZ_2^{60} & & & \\
		\ZZ_2^{100} & \ZZ_2^{40} & & \\
		\ZZ_2 & \ZZ_2 & \ZZ_2 & \ZZ_2
	}
	\]
	and degenerates at the $E_2$ page.
\end{theorem}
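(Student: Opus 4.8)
The plan is to first determine the $E_2$ page explicitly from the combinatorics of $\U$ and the local topology of the multi-section $\breve{\pi}$, and then to deduce degeneration from a dimension count against the Betti numbers \eqref{eq:Betti_numbers}. Write $\U=\{U_i\}$ for the open cover of $B$ of Construction~\ref{def:coarse_cover}, and $\mathcal{H}^q := \underline{H}^q(\breve{\pi}_\star\ZZ_2)$ for the presheaf $V\mapsto H^q(V,\breve{\pi}_\star\ZZ_2)$ on $B$; the spectral sequence in question is the standard one
\[
E_2^{p,q} = \check{H}^p(\U,\mathcal{H}^q) \ \Longrightarrow\ H^{p+q}(B,\breve{\pi}_\star\ZZ_2),
\]
whose abutment is $H^{\bullet}(B,\breve{\pi}_\star\ZZ_2)$; since the higher direct images of $\breve{\pi}$ vanish this equals $H^{\bullet}(\shL_\RR,\ZZ_2)$, which by \eqref{eq:Betti_numbers} has total $\ZZ_2$-dimension $1+101+101+1 = 204$.

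For each nonempty $S$, with $U_S := \bigcap_{i\in S}U_i$, one has $\mathcal{H}^q(U_S) = H^q(\breve{\pi}^{-1}(U_S),\ZZ_2)$, and the cover of Construction~\ref{def:coarse_cover} is designed so that every slice $\breve{\pi}^{-1}(U_S)\subset\shL_\RR$ is topologically transparent. I would run through the (small) nerve of $\U$ and, using the local models for $\breve{f}$ and the induced branched local models for $\breve{\pi}$ along the discriminant locus --- following \cite{GrossTopology,CBM3} and the analysis of \S\ref{sec:mod2_cohomology} --- identify the homotopy type of each $\breve{\pi}^{-1}(U_S)$ as a union of standard pieces: finite covers of balls and of tori over the smooth part of the base, together with the explicit models over neighbourhoods of the codimension $1$, $2$ and $3$ strata of the discriminant. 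From this one reads off all the groups $\mathcal{H}^q(U_S)$; in particular $\mathcal{H}^q(U_S) = 0$ for $q\ge 3$, each $\breve{\pi}^{-1}(U_S)$ being a proper open subset of the connected $3$-dimensional $\shL_\RR$. This packages the data into three \u{C}ech cochain complexes $C^{\bullet}(\U,\mathcal{H}^0)$, $C^{\bullet}(\U,\mathcal{H}^1)$, $C^{\bullet}(\U,\mathcal{H}^2)$ of $\ZZ_2$-vector spaces, supported in cochain degrees $0$ through $3$.

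The core of the proof is then computing the cohomology of these three complexes, which requires tracking the restriction maps $\mathcal{H}^q(U_S)\to\mathcal{H}^q(U_{S'})$ for $S\subset S'$. The row $q=0$ computes $\check{H}^{\bullet}(\U,\breve{\pi}_\star\ZZ_2)$; since this sheaf has a one-dimensional space of sections and the cover reflects the structure of $B$ as a three-sphere carrying it, this row comes out as $(\ZZ_2,\ZZ_2,\ZZ_2,\ZZ_2)$. For $q=1$ and $q=2$ the cohomology concentrates near the discriminant, and the bookkeeping of the branched local models yields $\check{H}^0(\U,\mathcal{H}^1) = \ZZ_2^{100}$, $\check{H}^1(\U,\mathcal{H}^1) = \ZZ_2^{40}$ and $\check{H}^0(\U,\mathcal{H}^2) = \ZZ_2^{60}$, all other entries vanishing. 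As anticipated in the introduction, the ranks $100 = 60+40$, $40$ and $60$ will emerge as the numbers of interior lattice points of, respectively, the codimension $\ge 2$ faces, the edges, and the ridges of the reflexive polytope $P$. This step --- keeping careful account of the local topology and of the \u{C}ech restriction maps --- is where essentially all of the work lies, and is the main obstacle; the remainder is formal.

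For degeneration, observe that on a page supported in $0\le p\le 3$, $0\le q\le 2$ with the displayed entries, every higher differential $d_r$ ($r\ge 2$) has zero source or zero target except for $d_2\colon E_2^{0,1}\to E_2^{2,0}$, $d_2\colon E_2^{1,1}\to E_2^{3,0}$ and $d_3\colon E_3^{0,2}\to E_3^{3,0}$. Each of these, if non-zero, strictly decreases the total $\ZZ_2$-dimension of the page; since
\[
\sum_{p,q}\dim_{\ZZ_2}E_2^{p,q} = 4 + 140 + 60 = 204 = \sum_{n}\dim_{\ZZ_2}H^n(\shL_\RR,\ZZ_2),
\]
all three must vanish, so $E_2 = E_\infty$ and the spectral sequence degenerates. (Equivalently, comparing the abutment in degrees $1$ and $3$ with the page already forces $d_2$ and $d_3$ to be zero.)
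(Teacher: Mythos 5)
Your skeleton is right and your degeneration step is sound: once the whole $E_2$ page is known, comparing its total $\ZZ_2$-dimension $4+140+60=204$ with $\sum_n h^n(\breve{\shL}_\RR,\ZZ_2)=204$ from \eqref{eq:Betti_numbers} forces every higher differential to vanish; this is essentially equivalent to (and slightly cleaner than) the paper's argument, which notes that $E_2^{3,0}\cong\ZZ_2$ must survive to give $H^3$ and then invokes \cite[Corollary~1.2]{AP19}. The genuine gap is that you never actually produce the page. You assert that ``bookkeeping of the branched local models yields'' $\breve{H}^0(\U,\shH^1_{\ZZ_2})\cong\ZZ_2^{100}$, $\breve{H}^1(\U,\shH^1_{\ZZ_2})\cong\ZZ_2^{40}$, $\breve{H}^0(\U,\shH^2_{\ZZ_2})\cong\ZZ_2^{60}$ and the bottom row $(\ZZ_2,\ZZ_2,\ZZ_2,\ZZ_2)$, but that is exactly where the content of the theorem lies, and the one justification you do spell out --- that the $q=0$ row comes out as $(\ZZ_2,\ZZ_2,\ZZ_2,\ZZ_2)$ because the sheaf has a one-dimensional space of sections and the cover ``reflects the structure of $B$ as a three-sphere'' --- is not a valid argument: that heuristic would give $(\ZZ_2,0,0,\ZZ_2)$. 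The row $\breve{H}^p(\U,\shH^0_{\ZZ_2})$ is the \u{C}ech cohomology of the pushforward of $\ZZ_2$ under a $7$-to-$1$ branched cover; its nontrivial entries in degrees $1$ and $2$ encode the branching/monodromy combinatorics, and in the paper they are obtained from the integral \u{C}ech complex $0\to\ZZ^{115}\to\ZZ^{710}\to\ZZ^{1190}\to\ZZ^{595}\to0$ computed by machine in the proof of Theorem~\ref{thm:cech_to_derived} (with the mod $2$ answer read off from the Smith normal form), and cross-checked only later via the Heegaard splitting of \S\ref{sec:heegaard_splitting}.

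It is also worth noting that the paper never computes the $q=1$ entries directly, and its route suggests how to repair your plan without the ``main obstacle'' you flag. From Lemmas~\ref{lem:preimage_face}, \ref{lem:topology_for_edges} and \ref{lem:topology_for_intersections} one only records $\breve{C}^0(\U,\shH^1_{\ZZ_2})\cong\ZZ_2^{300}$ and $\breve{C}^1(\U,\shH^1_{\ZZ_2})\cong\ZZ_2^{240}$, so $\dim\ker\delta-\dim\coker\delta=60$; combining this with $E_2^{0,2}\cong\ZZ_2^{60}$, the bottom row, the forced survival of $E^{3,0}$, and $h^1(\breve{\shL}_\RR,\ZZ_2)=h^2(\breve{\shL}_\RR,\ZZ_2)=101$ from \cite{AP19}, the paper \emph{solves} for $\dim\ker\delta=100$ and $\dim\coker\delta=40$. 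In other words, the paper uses the known Betti numbers to determine the page, whereas you use them only to force degeneration after a direct computation you do not carry out; as written, your proposal defers rather than proves the key entries, and executing your direct approach would require either the rank of a $240\times300$ matrix over $\ZZ_2$ (tracking all restriction maps) or the kind of explicit basis analysis that the paper performs only partially, and only later, in Theorem~\ref{Relating to integral points}.
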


Analysing the $E_2$ page of the spectral sequence in Theorem~\ref{thm:cech_to_derived_Z2_in_intro} more closely, we show in Theorem~\ref{Relating to integral points} that for each $p \in \{1,2\}$ there is a canonical choice of basis for $E_2^{2-p,p}$, which is in bijection with the set of integral points contained in relative interiors of $p$-dimensional faces of $P$. Using Batyrev's formula \eqref{Eq: Batyrev Hodge}, which expresses Hodge numbers in terms of such integral points, we deduce that
\begin {equation}
\label{eq: Hodge numbers and spectral sequences}
h^{2,1}(X) = \dim E_2^{0,2} + \dim E_2^{1,1} + 1=h^1(\breve{L}_\RR,\ZZ_2).
\end{equation}
We expect this correspondence to extend to all anti-canonical hypersurfaces in smooth toric Fano fourfolds: see Conjecture~\ref{conj:generalised_example}. 

In \S\ref{sec:betti_numbers} we study also the integral cohomology of $\breve{\shL}_{\RR}$, using a \u{C}ech-to-derived spectral sequence for the sheaf $\breve{\pi}_\star\ZZ$.
\begin{theorem}[See Theorem~\ref{thm:cech_to_derived} below]
    \label{thm:cech_to_derived_in_intro}
	The \u{C}ech-to-derived spectral sequence, relative to the open cover $\U$ defined in Construction~\ref{def:coarse_cover}, for the sheaf $\breve{\pi}_\star\ZZ$ has the $E_2$ page
	\[
	\xymatrix@R-2pc{
		\ZZ_2^{60} & & & \\
		0 & \ZZ_2^{36} \oplus \ZZ_4^6 \oplus \ZZ_8^4 \oplus \ZZ^2_{32} & & \\
		\ZZ & 0 & \ZZ_2 & \ZZ
	}
	\]
	and degenerates at the $E_2$ page.
\end{theorem}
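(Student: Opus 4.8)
The plan is to rerun, with $\ZZ$-coefficients, the computation that produces the mod $2$ page of Theorem~\ref{thm:cech_to_derived_Z2_in_intro}. Recall from \S\ref{sec:mod2_cohomology} that the Čech-to-derived spectral sequence attached to the cover $\U$ of $B$ and the sheaf $\breve{\pi}_\star\ZZ$ has
\[
E_1^{p,q} \;=\; \bigoplus_{|I|=p+1} H^q\!\bigl(\breve{\pi}^{-1}(U_I),\,\ZZ\bigr),
\qquad
E_2^{p,q} \;=\; \check{H}^p\!\bigl(\U;\, U_I\mapsto H^q(\breve{\pi}^{-1}(U_I),\ZZ)\bigr),
\]
abutting to $H^{p+q}(\breve{\shL}_\RR,\ZZ)$. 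Accordingly the argument splits into (i) computing the integral cohomology of $\breve{\pi}^{-1}(U_I)$ for every nonempty intersection $U_I$ of members of $\U$, refining the mod $2$ data; (ii) identifying the Čech differentials $d_1$ and taking cohomology, the novelty relative to the $\ZZ_2$-case being the $2$-power torsion produced by their elementary divisors; and (iii) noting that degeneration is forced by the shape of $E_2$.

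For step (i) I would combine the explicit description of $\breve{\pi}\colon\breve{\shL}_\RR\to B$ with the combinatorics of the cover from Construction~\ref{def:coarse_cover}. Over the part of $U_I$ disjoint from the discriminant locus $\Delta\subset B$, the map $\breve{\pi}$ is a finite covering whose fibre is a monodromy-invariant subset of the $2$-torsion of a smooth SYZ torus, so there $\breve{\pi}^{-1}(U_I)$ is a disjoint union of copies of $U_I$; over the pieces of $U_I$ meeting $\Delta$ one inserts the local models of the real Lagrangian along the edges and the positive and negative vertices of $\Delta$ recalled in \S\ref{sec:mod2_cohomology}. From this, $H^q(\breve{\pi}^{-1}(U_I),\ZZ)$ vanishes for $q\geq 3$ (each $\breve{\pi}^{-1}(U_I)$ being a proper open subset of the connected $3$-dimensional space $\breve{\shL}_\RR$), is free for $q\in\{0,1\}$ (indeed $H^1$ is always torsion-free), and $H^2$ is $2$-torsion, supported on those $U_I$ containing vertices of $\Delta$. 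In particular $E_1^{\bullet,2}$ is annihilated by $2$, its $\check{H}^0$ is exactly the group $\ZZ_2^{60}$ already seen mod $2$, and $\check{H}^p=0$ for $p\geq 1$, giving $E_2^{0,2}=\ZZ_2^{60}$ and $E_2^{p,2}=0$ for $p\geq 1$.

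The substantive rows are $q=0$ and $q=1$. The bottom row $E_1^{\bullet,0}$ is the Čech complex of the constructible sheaf $\breve{\pi}_\star\ZZ$ on $B\cong S^3$; connectedness of $\breve{\shL}_\RR$ gives $E_2^{0,0}=\ZZ$, and evaluating this Čech cohomology from the structure of $\breve{\pi}_\star\ZZ$ as a constructible sheaf gives $E_2^{1,0}=0$, $E_2^{2,0}=\ZZ_2$, $E_2^{3,0}=\ZZ$ (the $\ZZ$ in top degree recording that $\breve{\shL}_\RR$ is closed, orientable and $3$-dimensional, the $\ZZ_2$ a global monodromy class). For $q=1$ the groups $E_1^{0,1},E_1^{1,1},E_1^{2,1},E_1^{3,1}$ are explicit finitely generated free abelian groups and the $d_1$ are the alternating-sum restriction maps, written in bases adapted to the stratification of $\Delta$. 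Computing the Smith normal forms of these maps — organised stratum by stratum, so that the integer matrices are block-structured — yields $E_2^{0,1}=E_2^{2,1}=0$ and $E_2^{1,1}\cong\ZZ_2^{36}\oplus\ZZ_4^6\oplus\ZZ_8^4\oplus\ZZ_{32}^2$. As a check one reduces these integral Čech complexes modulo $2$ and matches them, via the Bockstein long exact sequences relating the presheaves $U\mapsto H^\bullet(\breve{\pi}^{-1}(U),\ZZ)$ and $U\mapsto H^\bullet(\breve{\pi}^{-1}(U),\ZZ_2)$, against the page of Theorem~\ref{thm:cech_to_derived_Z2_in_intro}. I expect step (ii) — and within it the exact determination of these elementary divisors, in particular the summands of order greater than $4$, namely $\ZZ_8^4$ and $\ZZ_{32}^2$, together with the vanishing of $E_2^{0,1}$ and $E_2^{2,1}$ — to be the main obstacle, since it is here that the integral (rather than merely mod $2$) structure of the monodromy around the vertices of $\Delta$ must be pinned down.

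Finally, the spectral sequence degenerates at $E_2$ for formal reasons once this page is known: the only higher differentials that could be nonzero leave $E_2^{1,1}$ or $E_2^{0,2}$, their targets being $E_2^{2,1}=0$ or subquotients of the torsion-free group $E_2^{3,0}=\ZZ$, so all of them vanish, and every other $d_r$ ($r\geq 2$) has zero source or target. Hence $E_2=E_\infty$, and as a consistency check the induced filtration on the abutment returns $H^0(\breve{\shL}_\RR,\ZZ)=\ZZ$, $H^1(\breve{\shL}_\RR,\ZZ)=0$, $H^3(\breve{\shL}_\RR,\ZZ)=\ZZ$ and an iterated extension of finite $2$-groups in degree $2$ — recovering that $\breve{\shL}_\RR$ is a rational homology $3$-sphere with $2$-primary integral cohomology.
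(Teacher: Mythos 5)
Your proposal takes essentially the same route as the paper's proof of Theorem~\ref{thm:cech_to_derived}: the $E_1$ page is assembled from the integral cohomology of the preimages of the cover elements and their pairwise intersections (Lemmas~\ref{lem:hexagonal_regions}--\ref{lem:topology_for_intersections}), the $q=1$ row and the bottom row are reduced to Smith normal forms of explicit integer matrices, and degeneration follows because every relevant differential maps a torsion group to a free group. What you defer as the main obstacle --- the actual elementary divisors giving $\ZZ_2^{36}\oplus\ZZ_4^6\oplus\ZZ_8^4\oplus\ZZ_{32}^2$, the $\ZZ_2$ in position $(2,0)$, and the vanishing of $E_2^{0,1}$ --- is exactly what the paper settles by explicit machine computation (Magma code in the supplementary material), with the bottom row independently verified via the Heegaard splitting of \S\ref{sec:heegaard_splitting}.
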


This implies that
\[
H^0(\breve{\shL}_\RR,\ZZ) \cong H^3(\breve{\shL}_\RR,\ZZ) \cong \ZZ, \,\ \,\ H^1(\breve{\shL}_\RR,\ZZ) \cong 0, \,\ \,\ H^2(\breve{\shL}_\RR,\ZZ) \cong T,
\]
where $T$ is a $2$-primary finite abelian group (that is, the order of each element of $T$ is a power of $2$) such that every element has order less than or equal to $2^7$. In particular, $\breve{\shL}_\RR$ is a rational homology sphere.

In \S\ref{sec:heegaard_splitting} we give an alternative, more topological, approach to some of these calculations, which applies in much greater generality. Consider once again the case where $X$ and $\breve{X}$ form a Batyrev--Borisov mirror pair. As in the quintic case, the real Lagrangian $\breve{L}_\RR \subset \breve{X}$ is the disjoint union of a $3$-sphere and a non-trivial component $\breve{\shL}_\RR$. We construct a Heegaard splitting of $\breve{\shL}_\RR$ and explain (in Algorithm~\ref{alg:fundamental_group}) how to use this splitting to compute $\pi_1(\breve{\shL}_\RR)$. In particular, since this determines $H^1(\breve{\shL}_\RR,\ZZ)$, this verifies parts of the \u{C}ech-to-derived calculations.

We view Theorem~\ref{thm:cech_to_derived_in_intro} as experimental evidence for the following conjecture on the structure of the integral cohomology groups of the real Lagrangians $\breve{L}_\RR$ in a more general setting.

\begin{conjecture}
\label{Conj: $2$-primary} Let $X$ be Calabi-Yau 3-fold obtained as a (crepant resolution of a) complete 
intersection in a toric Fano variety. Let $f \colon X \rightarrow B$ be a Lagrangian torus fibration constructed as in 
\cite{GrossTopology,GrossBB, CBM3} and let $L_{\RR}$ be the real Lagrangian in $X$ obtained as fixed point locus of the anti--symplectic involution constructed in \cite{CBMS} and given on smooth fibers of $f$ by $x \mapsto -x$.
Then, the cohomology groups $H^j(L_\RR, \ZZ)$ are $2$-primary for $0<j<3$.
\end{conjecture}
We verify Conjecture~\ref{Conj: $2$-primary} for the Schoen's Calabi--Yau in \cite{AP:Schoen}. Finally, we note that Theorem~\ref{thm:cech_to_derived_in_intro} has applications to mirror symmetry. Homological Mirror Symmetry predicts the existence of a rank seven sheaf $F$ on the quintic threefold $X$, associated to the Lagrangian $\breve{\shL}_\RR$. Moreover, assuming that $\breve{\shL}_\RR$ bounds no holomorphic discs, Theorem~\ref{thm:cech_to_derived_in_intro} implies that $F$ is a \emph{spherical object} in the derived category of $X$ which is orthogonal to the structure sheaf, that is, $\Ext^i(\O_X,F) = \Ext^i(F,\O_X) = 0$ for all $i \in \ZZ$. Investigating sheaves mirror to the real Lagrangians we study will be the focus of future work. 


\subsection*{Related work}
The mod $2$ cohomology of real Lagrangians has been extensively studied in the literature from several points of view, including equivariant cohomology~\cite{biss2004mod2}, real algebraic geometry~\cite{bihan2003asymptotic, finashin2019first} and tropical geometry~\cite{itenberg-topology,itenberg2006asymptotically}. Furthermore, the relation between Hodge numbers and the $\ZZ_2$ cohomology groups of real Lagrangians can also be studied using tropical homology which is introduced in \cite{itenberg2019tropical}, and whose real analog is studied in \cite{renaudineau2018bounding}. We also note that Lagrangian submanifolds in the mirror quintic have been constructed using tropical geometry in~\cite{MR19}.

In the present paper, we investigate following 
\cite{GrossTopology,GrossBB,CBMS, CBM3} a real Lagrangian defined as fixed point locus of an anti-symplectic involution acting on a Lagrangian torus fibration constructed from an integral affine manifold with simple singularities. An a priori different construction due to Gross and Siebert \cite{GS1,GS2}, starting with an integral affine manifold with simple singularities and additional ``gluing data", produces toric degenerations of algebraic Calabi-Yau varieties. When these ``gluing data" is real \cite[Cor 7.2]{AS}, we obtain families of algebraic Calabi-Yau varieties endowed with a real algebraic structure, i.e.\ the data of an anti-holomorphic involution.
The topology of the real loci of these families has been studied in  
\cite{AS} using Kato-Nakayama spaces of the special fiber endowed with log structure.

A comparison of the topological torus fibrations of \cite{GrossTopology,GrossBB,CBMS}
with the topological torus fibrations on the Kato-Nakayama spaces of toric degenerations has been announced in 
\cite{RZ3}. A natural question to ask is if it is also possible to compare the anti-symplectic involution of 
\cite{CBMS}, which is our object of study in the case of the mirror quintic, with the real structure on Kato-Nakayama spaces studied in \cite{AS}. Kato--Nakayama spaces admit a natural map to the unit circle $S^1$, and we expect that the real Lagrangian obtained as fixed locus of the anti-symplectic involution
should be homeomorphic to the real locus of the fibre of the Kato-Nakayama space over $1\in S^1$ defined using trivial gluing data (see \cite[Remark 4.3]{AS}). Nonetheless, so far this question, namely whether the fixed point locus of anti-symplectic involutions of \cite{CBM} would agree with the real locus in the phase of the Kato--Nakayama space, remains open. We note that the techniques recently introduced in \cite{RZ1, RZ3} should be useful for answering it.

\subsection*{Acknowledgements}
We thank Mark Gross and Tom Coates for many useful and inspiring conversations. 
We also thank the anonymous referees for their valuable comments 
which helped to improve the manuscript.
This project has received funding from the European Research Council (ERC) under the European Union’s Horizon 2020 research and innovation programme (grant agreement No. 682603). H.A. was supported by Fondation Math\'ematique Jacques Hadamard. TP was partially supported by a Fellowship by Examination at Magdalen College, Oxford.


\section{SYZ mirror symmetry from a topological perspective}
\label{sec:SYZ Mirrors}

Mirror symmetry is a phenomenon which relates the geometry of pairs of Calabi--Yau varieties, and was first discovered in the string theory literature by Greene--Plesser
\cite{GP90} and Candelas--Lynker--Schimmrigk \cite{CLS90}, confirming earlier suggestions of Dixon \cite{D88} and Lerche--Vafa--Warner
\cite{LVW89}. This phenomenon has famously been used to compute enumerative invariants of Calabi--Yau varieties, see the seminal work of Candelas--de la Ossa--Greene--Parkes~\cite{COGP}. In the last twenty-five years several techniques relating complex and symplectic geometries of Calabi-Yau's have
been developed, suggesting deep connections between the geometries of the underlying mirror manifolds. One of the central proposals in this context has been formulated by Strominger--Yau--Zaslow~\cite{SYZ}, the \emph{SYZ Conjecture}. This suggests that, roughly speaking, mirror Calabi-Yau's arise as dual special Lagrangian torus fibrations. Partial verifications of the SYZ conjecture have been established in various contexts \cite{HT,GrossSYZ, ChanSYZ}. 

A topological version of the SYZ conjecture, focusing on torus fibrations without a reference to the special Lagrangian condition, has been investigated by Zharkov \cite{Z00} and Gross~\cite{GrossTopology}. For Gross, a \emph{topological torus fibration} on a Calabi--Yau $X$ is a continuous proper map $f \colon X \to B$ with connected fibres between topological manifolds whose general fibres are tori. The base $B$ here is an integral affine manifold with singularities.



\begin{definition}
Given an $n$-dimensional manifold, an \emph{integral affine structure} is given by an open cover $\{ \mathcal{U}_i\}$ for it, along with coordinate charts $\psi_i\colon\mathcal{U}_i \to M_{\RR}$, with transition functions in
\[
\Aff(M)= M \rtimes GL_n(\ZZ),
\]
where $M$ denotes a free abelian group of rank $n$, and $M_{\RR}:= M \otimes_{\ZZ} \RR$ is the associated real vector space.
We call a topological manifold an \emph{integral affine manifold with singularities} if there exists a union of submanifolds $\Delta \subset B$, of codimension at least $2$, such that
\[
B_0 := B\setminus \Delta
\]
is an integral affine manifold. We refer to $\Delta$ as the \emph{discriminant locus of the affine structure}. 
\end{definition}

We recall that the smooth locus $B_0$ of an integral affine manifold is the base of a pair of torus fibrations, obtained as the quotients of $TB_0$ and $T^\star B_0$ by the lattice of integral tangent vectors $\Lambda$ and covectors $\breve{\Lambda}$ respectively. To investigate topological properties of mirror pairs, such as Hodge theoretic dualities, Gross \cite{GrossTopology,GrossSLagI} restricts to Calabi--Yau manifolds admitting topological torus fibrations over integral affine manifolds with \emph{simple} singularities. These fibrations can be obtained canonically as compactifications of the non-singular torus fibrations over $B_0$ described above once certain assumptions on the affine monodromy around the discriminant locus are imposed. We recall that the affine monodromy is defined as follows.

\begin{definition}
Fix a point $b\in B_0$. Let $\gamma\colon S^1\rightarrow B_0$ be a loop based at $b$ and let $U_1,\ldots,U_m$ be a finite cover of the image of $\gamma$ by charts of the integral affine structure on $B_0$. 
Denote by $A_{i,i+1}^{-t}$ the inverse transpose of the linear part of the change of coordinate function defined on $U_{i}\cap U_{i+1}$. The \emph{affine monodromy representation} $\psi\colon \pi_1(B_0,b) \to \GL_n(\ZZ)$ is defined as 
\[
\psi =
\left\{
	\begin{array}{ll}
		A_{1,m}^{-t} \cdots A_{2,1}^{-t}  & \mbox{if } m \geq 2 \\
		\mathrm{Id} & \mbox{otherwise} 
	\end{array}
\right.
\]
\end{definition}

Note that the definition of affine monodromy is independent of the representative $\gamma \in \pi_1(B_0,b)$, and thus it is a well-defined homomorphism, see \cite{A,AM}. The affine monodromy is, by definition, the inverse transpose of the linear part of the standard monodromy representation around a loop $\gamma$ in $B_0$, which we denote $T_\gamma$, see \cite[Definition~$1.4$]{GS1}. The following theorem, which establishes the existence of topological Calabi--Yau compactifications over integral affine manifolds with simple singularities, is one of the main results of \cite[\S$2$]{GrossTopology}.

\begin{theorem}
\label{Thm:Marks}
Let $B$ be a $3$-manifold and let $B_0 \subseteq B$ be a
dense open set such that $\Delta := B \setminus B_0$ is a trivalent graph, with a partition on its set of vertices of into positive and negative vertices. Suppose that there is a $T^3$-bundle
\begin{equation}
f_0 \colon X_0 \to B_0
\end{equation} 
such that its local monodromy is generated by the following matrices:
\begin{itemize}
    \item[1)] Around any edge of $\Delta$, the monodromy is given by 
\begin{align}
\label{Eqn: monodromy around edge}
T  =  \left( \begin{matrix} 1&0&0\\ 1&1&0\\0&0&1\\ \end{matrix}\right).
\end{align}
\item[2)] Around any negative vertex of $\Delta$ the monodromy is given by
\begin{align}
\label{Eqn: monodromy around negative}
T_1  =  \left( \begin{matrix} 1&1&0\\ 0&1&0\\0&0&1\\ \end{matrix}\right), \,\ T_2  =  \left( \begin{matrix} 1&0&-1\\ 0&1&0\\0&0&1\\ \end{matrix}\right), \,\ T_3  =  \left( \begin{matrix} 1&-1&1\\ 0&1&0\\0&0&1\\ \end{matrix}\right).
\end{align}
\item[3)] Around any positive vertex of $\Delta$ the monodromy is given by
\[
(T_1^t)^{-1},(T_2^t)^{-1},(T_3^t)^{-1}.
\]
\end{itemize}
Then, there is a compactification of $f_0$, to a topological torus fibration $f\colon X \to B$ such that the singular fibers are as follows.
\begin{itemize}
    \item[1)] For any point $p$ in the interior of an edge of the discriminant locus, $f^{-1}(p)$ is homeomorphic to the product of a nodal elliptic curve with $S^1$.
    \item[2)] For any negative vertex $v_- \subset \Delta$, $f^{-1}(v_-)$ admits a map to a two-dimensional torus with $S^1$ fibres away from a figure eight, over which the fibres are single points, as described in \cite[Chapter~$6.4$]{DBranes09}, and $\chi(f^{-1}(v_-)) = -1$.	
    \item[3)] For any positive vertex $v_+ \subset \Delta$, $f^{-1}(v_+)$ is homeomorphic to a three dimensional analogue of a nodal elliptic curve as described in \cite[Example~$2.6$(5)]{GrossTopology} and $\chi(f^{-1}(v_+)) = 1$. 
\end{itemize}
\end{theorem}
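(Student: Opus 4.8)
The plan is to establish the compactification by a local-to-global construction: build explicit model fibrations over a neighbourhood of each stratum of $\Delta$ — the interior of an edge, a negative vertex, a positive vertex — each arranged so that its restriction to the complement of $\Delta$ is a $T^3$-bundle realising exactly the prescribed monodromy, and then glue these models to the given bundle $f_0$ over $B_0$. Concretely, I would fix an open cover $\{B_0\}\cup\{N_e\}\cup\{N_v\}$ of $B$, where $N_e$ is a tubular neighbourhood of a compact subarc of an edge $e$ (homeomorphic to $D^2\times I$ with $e$ the core), and $N_v$ is a small ball around a vertex $v$ meeting the three incident edges in three short subarcs. The cover should be chosen so that every nonempty intersection $N_e\cap B_0$, $N_v\cap B_0$, $N_e\cap N_v$, $N_e\cap N_v\cap B_0$ is homotopy equivalent to a point, a circle, or a wedge of two circles; this is what makes the patching manageable, since a $T^3$-bundle over such a base is determined up to isomorphism by its monodromy representation into $\GL_3(\ZZ)$ (the higher obstruction classes vanish for dimension reasons).

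Over $N_e$ the model is a product. Identify the $T^3$-fibre with $T^2\times S^1$ so that the monodromy $T$ of \eqref{Eqn: monodromy around edge} acts as a Dehn-twist shear on the $T^2$ factor and trivially on the $S^1$ factor, and set $X|_{N_e}:=(Y\times S^1)\times I$, where $Y\to D^2$ is the standard Kodaira $I_1$ (nodal) elliptic fibration over the disc. Its monodromy around $e$ is conjugate to $T$ by construction, the fibre over an interior point of $e$ is a nodal cubic times $S^1$, and the Euler characteristic of that fibre is $0$, yielding item (1). The genuinely substantial models are those over the vertices. For a negative vertex I would use the local model of \cite[Chapter~6.4]{DBranes09} (equivalently \cite[Example~2.6]{GrossTopology}): a fibration over a ball in $\RR^3$ whose discriminant is a tripod and whose central fibre maps onto $T^2$ with generic fibre $S^1$, degenerating over a figure-eight to single points. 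Two things must be checked: first, that its restriction to the complement of the tripod is a $T^3$-bundle whose monodromies around the three rays are simultaneously conjugate to $T_1,T_2,T_3$ — note that $T_1T_2T_3=\mathrm{Id}$, which is forced by $\pi_1$ of a thrice-punctured sphere and which the model must reproduce; second, that $\chi$ of the central fibre is $-1$ (the figure-eight contributes $-1$ and the complementary, $S^1$-fibred, region contributes $0$). The positive-vertex model is the fibrewise dual: replacing each $T^3$ by its dual torus sends monodromy $M$ to $(M^t)^{-1}$, so the three edge monodromies become $(T_i^t)^{-1}$ as required, the central fibre becomes the three-dimensional analogue of a nodal elliptic curve of \cite[Example~2.6(5)]{GrossTopology}, and duality flips the Euler characteristic to $+1$, giving items (2) and (3).

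It remains to glue. On each pairwise overlap the monodromy of the relevant model bundle agrees by construction with that of $f_0$ (and of the neighbouring model), so over an overlap homotopy equivalent to a wedge of circles the restrictions are isomorphic; choose such isomorphisms $\phi_{0e}$, $\phi_{0v}$, $\phi_{ev}$. On the triple overlaps $N_e\cap N_v\cap B_0$ the composite $\phi_{ev}\circ\phi_{0e}\circ\phi_{0v}^{-1}$ is a bundle automorphism, i.e. a section of the sheaf of automorphisms of a $T^3$-bundle; since this sheaf is locally constant and the nerve of the cover is one-dimensional up to homotopy, the resulting nonabelian $1$-cocycle is trivial, so the $\phi$'s can be adjusted to satisfy the cocycle condition. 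Descent then produces the total space $X$ — a topological manifold, being locally modelled on the (manifold) models and on $X_0$ — together with the torus fibration $f\colon X\to B$ extending $f_0$, whose singular fibres over $\Delta$ are precisely those listed. The crux of the argument, and the step I expect to cost the most effort, is the construction of the negative-vertex model and the verification that its monodromy around the three incident edges is simultaneously conjugate to $(T_1,T_2,T_3)$ with central-fibre Euler characteristic $-1$; by comparison the edge model is a product and the gluing, once the cover is chosen well, is essentially formal.
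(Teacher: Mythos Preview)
The paper does not supply its own proof of this theorem: it is stated as a quotation of one of the main results of \cite[\S2]{GrossTopology} (Gross's topological SYZ paper), and the surrounding text makes clear it is being invoked, not reproved. So there is nothing in the paper to compare your proposal against.

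That said, your outline is broadly the strategy Gross uses --- build local models over tubular neighbourhoods of edges and balls around vertices, verify their monodromies match the prescribed ones, then patch --- so as a reconstruction it is on target. Two places in your write-up are under-justified. First, the gluing step: you assert that the nonabelian $1$-cocycle measuring the failure of $\phi_{ev}\circ\phi_{0e}\circ\phi_{0v}^{-1}$ to be the identity is trivial ``since\ldots the nerve of the cover is one-dimensional up to homotopy''. That is not a valid reason; nonabelian $\breve{H}^1$ is typically nontrivial over graphs. What is actually used is that the sheaf of fibre-preserving automorphisms of a $T^3$-bundle has identity component a $T^3$-torsor (translations), and the relevant Čech class with values in that torus can be killed by adjusting the $\phi$'s --- this is the content of \cite[Theorem~2.1]{GrossTopology} and requires an explicit argument, not just a dimension count. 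Second, ``duality flips the Euler characteristic to $+1$'' is correct here but is not an abstract principle: fibrewise dualisation is only defined over $B_0$, and the positive-vertex compactification uses a separate local model (that of \cite[Example~2.6(5)]{GrossTopology}) whose central fibre one computes directly to have $\chi=+1$. In short: your plan is the right one, but the patching and the positive-vertex Euler-characteristic claim both need the concrete input from Gross's paper rather than the abstract reasons you give.
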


Among the topological Calabi--Yau compactifications $f\colon X \to B$ obtained by Theorem \ref{Thm:Marks} is one of the most extensively studied examples in the literature: the quintic threefold. This example is described in detail in \cite[Theorem~$0.2$]{GrossTopology}. Such topological compactifications exist for a wide range of Calabi--Yau threefolds, for instance for Calabi--Yau complete intersections in toric varieties, as described in \cite{HZci, GrossBB}.

It is shown in \cite{CBM3} that these topological compactifications can be carried out in the symplectic category. 
More precisely, it is shown in \cite{CBM3} that there exists a smooth symplectic structure on $X$ such that the a small perturbation of the topological fibration $f \colon X \to B$ which appears in Theorem~\ref{Thm:Marks} becomes piecewise-smooth Lagrangian. The small perturbation replaces the discriminant locus $\Delta$ by a small thickening around its negative vertices. These Lagrangian torus fibrations admit a Lagrangian section and it is shown in \cite{CBMS} that there exists a unique anti-symplectic involution of $X$, preserving the Lagrangian fibration and fixing 
the Lagrangian section. The fixed point locus of this anti-symplectic involution is a real Lagrangian 
in $X$. The main topic of the present paper is the study of the topology of this real Lagrangian for $X$ a 
specific topological model of the mirror quintic. From the topological point of view, the small deformation of $f$
leading to a small thickening of the discriminant locus is irrelevant, and so we will allow ourselves the abuse of 
language to call $f$ a Lagrangian fibration and to study the topology of $f$ rather of a small
perturbation of $f$.



\section{The mirror to the quintic threefold $X$}
\label{sec:integral_affine_manifolds}

In this section we describe topological Calabi--Yau compactifications 
\[
f \colon X \lra B \,\ \,\ \mathrm{and} \,\ \,\ \breve{f} \colon \breve{X} \lra B
\]
on the quintic threefold $X$ and its mirror $\breve{X}$, where $B$ is an integral affine manifold with simple singularities homeomorphic to $S^3$.
Denoting by $B_0 \subset B$ the smooth locus of the integral affine structure, the spaces  $X$ and $\breve{X}$
are compactifications
respectively of $T^{*}B_0/\breve{\Lambda}$ 
and $TB_0/\Lambda$, where $\Lambda$ is the local system of integral tangent vectors on $B_0$.
For further details we refer to \cite[\S$19.3$]{GHJ}, and \cite[Example~$6$]{CBM}.

Let $\Delta_{\PP^4}$ be the moment polytope of the toric variety $\PP^4$, given by the $4$-simplex obtained as the convex hull of the set
\begin{align}
\label{eq: points}    
\shV := \{ (-1,-1,&-1,-1), (4,-1,-1,-1), (-1,4,-1,-1), \\ \nonumber &(-1,-1,4,-1), (-1,-1,-1,4) \}.
\end{align}
We set $P_i$ denote the $i$th member of $\shV$ for each $i \in \{1,\ldots,5\}$. Moreover, we set $B := \partial \Delta_{\PP^4}$, the boundary of $\Delta_{\PP^4}$, which is homeomorphic to the $3$-sphere. Thus $B$ is the union of five tetrahedra, glued pairwise to each other along a common triangular face, as illustrated in Figure~\ref{Base}, together with some positive and negative vertices of the discriminant locus. Note that $B$ contains ten triangular faces, ten edges, and five vertices.

Let $\sigma_{ijk}$ denote the triangular face of $\Delta_{\PP^4}$ spanned by the vertices $P_i,P_j,P_k \in \shV$. Fix a regular triangulation of $\sigma_{ijk}$, displayed in black in Figure~\ref{fig:the_discriminant_locus}, such that the vertices of this triangulation are the integral points of $\sigma_{ijk}$. Let $\Delta_{ijk}$ be the union of the one dimensional cells in the first barycentric subdivision of this triangulation which do not contain an integral point of $\sigma_{ijk}$. We illustrate $\Delta_{ijk}$ in Figure~\ref{fig:the_discriminant_locus} in red. 
\begin{figure}
\resizebox{0.3\textwidth}{!}{
\includegraphics{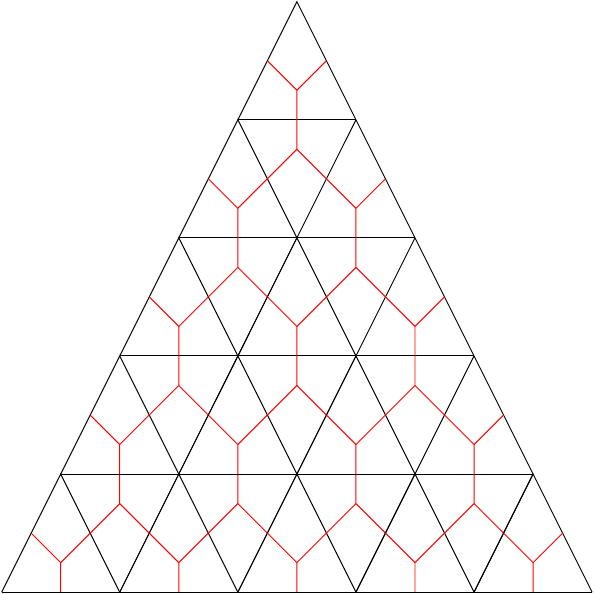}
}
\caption{The discriminant locus on a triangular face.}
\label{fig:the_discriminant_locus}
\end{figure}
Finally we set the discriminant locus
\begin{equation}
\label{TheDiscriminantLocus}
\Delta:= \bigcup_{i,j,k} \Delta_{ijk}
\nonumber
\end{equation}
The affine structure on $B_0 := B\setminus \Delta$, for the fibration on the quintic threefold
\[
f \colon X \lra B
\]
is described in \cite[p.~157]{GHJ}. Note that there are two sorts of vertices of $\Delta$: the vertices that are in the interior of a triangular face, which are \emph{negative vertices}; and the vertices that are in the interiors of edges, which are \emph{positive vertices}, following the convention used in \cite[p.~$241$]{CBM}. The local monodromy of the affine structure around each type of vertex is described as in Theorem \ref{Thm:Marks}, and in more detail in \cite[Appendix~A, Example~A.$1$]{AP19}.


\section{The real Lagrangian $\breve{L}_{\RR}$ in $\breve{X}$}
\label{sec:real_locus}

In this section we describe a real Lagrangian $\breve{L}_{\RR} \subset \breve{X}$ in the mirror to the quintic threefold, 
\[
\breve{f} \colon \breve{X} \lra B
\]
as described in \S\ref{sec:integral_affine_manifolds}.
For details we refer to \cite[Section $2$]{CBM} and \cite{AP19}. We investigate the topology of $\breve{L}_\RR$ further, and show that each of its connected components is orientable in Proposition \ref{pro:orientable}.

\begin{figure}
\resizebox{0.8\textwidth}{!}{
\includegraphics{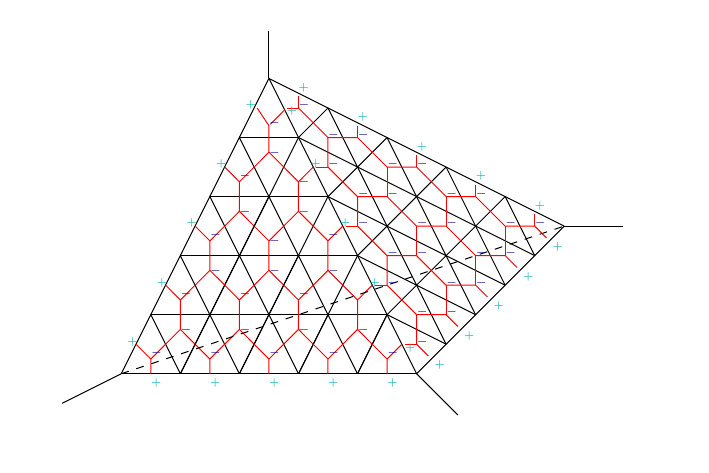}
}
\caption{Positive and negative vertices of $\Delta$, displayed on one of the five tetrahedra forming the base $B=\partial \Delta_{\PP^4}$ of the fibration on the quintic threefold. The edges emanating from the four vertices are parts of the edges of the other four tetrahedra.}
\label{Base}
\end{figure}

The space $\breve{L}_\RR \subset \breve{X}$ is the fixed point set of an anti-symplectic involution which acts on each fibre of $\breve{f}_0 \colon \breve{X}_0 \to {B_0}$ by taking $x \mapsto -x$. Note that this involution extends over fibres $\breve{f}^{-1}(p)$ for $p \in \Delta$, see \cite{CBMS} for a more detailed discussion. The fixed point set of this involution intersects each smooth fiber of $f$ in $8$ points. Identifying this smooth fibre with a quotient of the unit cube with opposite faces identified, this fixed point set coincides with the set of half integral points in the unit cube. 

Note that, in an affine neighbourhood of a vertex $v$, the strata of $\partial \Delta_{\PP^4}$ containing $v$ form a fan isomorphic to the toric fan of $\PP^3$. We identify this neighbourhood of $v$ with a domain in $\RR^3$ (with its standard integral affine structure) and let $\{e_i : i \in \{1,2,3\}\}$ denote the standard basis. We orient $\RR^3$ so that $(e_1,e_2,e_3)$ is a positively oriented basis. We identify the ray generators of the edges of $\partial \Delta_{\PP^4}$ which meet this neighbourhood with the vectors
\[
\{ e_1,e_2,e_3,-e_1-e_2-e_3\}.
\]
Writing $d_i := e_i$ for $i \in \{1,2,3\}$ and $d_4 := -e_1-e_2-e_3$, let $\tau_i$ denote the edge of $\Delta_{\PP^4}$ which contains $v$ and has tangent direction $d_i$ at $v$ for all $i \in \{1,\ldots,4\}$. Moreover, let $\sigma_{ij}$ denote the face of $\Delta_{\PP^4}$ containing the edges $\tau_i$ and $\tau_j$, for all pairs $i$,~$j \in \{1,\ldots,4\}$ such that $i \neq j$.

We now define loops $\gamma_{ij,k}$, for $i$,~$j \in \{1,\ldots,4\}$, and $k \in \{i,j\}$. These loops are based at $v$ and trace around a segment of the discriminant locus $\Delta$, as shown in Figure~\ref{fig:loops}, which is contained in $\sigma_{ij}$ and intersects edge $\tau_k$. We denote by $n_{ij}$ the primitive integral normal vector to $\sigma_{ij}$, such that the tuple $( d_i,d_j,n_{ij} )$ forms an ordered positive basis for $\RR^3$. We orient the loops $\gamma_{ij,k}$ by requiring that the tangent vector of $\gamma_{ij,k}$ at the unique point (other than $v$) at which the image of $\gamma_{ij,k}$ intersects $\sigma_{ij}$ pairs positively with $n_{ij}$. 

As described above, the eight real points on each fiber $b \in B_0$ which are invariant under the involution are precisely the $2$-torsion points of the torus $\Hom(\Lambda_b,U(1))$, recalling that $\Lambda$ denotes the sheaf of integral tangent vectors in $TB_0$. Identifying $\Hom(\Lambda_b,U(1))$ with $\RR^3/\ZZ^3$, these eight points are identified with the following eight vectors:
\begin{align*}
u_0 & = (0, 0, 0),\quad 
u_1  = \frac{1}{2}(0, 0, 1),\quad
u_2  = \frac{1}{2}(1, 0, 1 ),\quad
u_3  = \frac{1}{2}(1, 0, 0),\\
u_4 & = \frac{1}{2}(1, 1, 0),\quad
u_5  = \frac{1}{2}(0, 1, 0),\quad
u_6  = \frac{1}{2}(0, 1, 1),\quad
u_7  =\frac{1}{2}(1, 1, 1).
\end{align*}
The monodromy action $T'_{ij,k}$ around each loop $\gamma_{ij,k}$
is analysed in detail for the fibration on the quintic threefold $f\colon X \to B$ in \cite[Appendix~A]{AP19}.
\begin{figure}
	\includegraphics{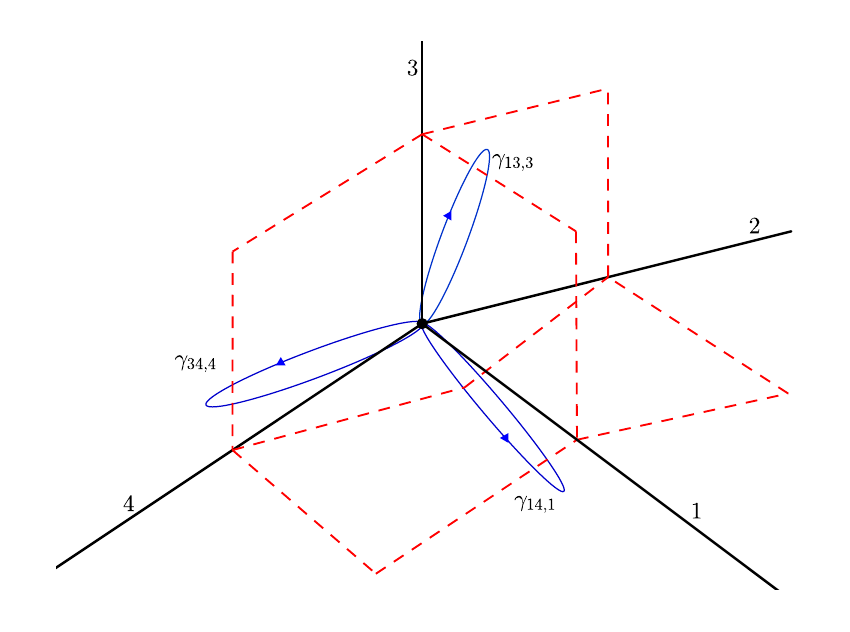}
	\caption{Examples of loops $\gamma_{ij,k}$.}
	\label{fig:loops}
\end{figure}
Recall that the mirror to the quintic admits a fibration $\breve{f} \colon \breve{X} \to B$, where the affine monodromy around the discriminant locus on $B$ is given by taking the inverse transpose of the affine monodromy on the base of the fibration for $f \colon X \to B$. Considering the action of $T_{ij,k}=({T'_{ij,k}})^{-t}$ on $\breve{L}_{\RR} \subset \breve{X}$ which permutes the torsion points $u_i$, for $i=\{1,\ldots,7\}$, we obtain the following set of double transpositions, similar to those described in \cite[Appendix~A]{AP19}:
\begin{align}
\label{Eq:monodromy for the mirror quintic}
T_{12,1}:(12)(67),  \quad & T_{12,2}:(16)(27) \quad T_{13,1}:(45)(67), \quad T_{13,3}:(47)(56) \\ 
\nonumber
T_{14,1}:(12)(45),  \quad & T_{14,4}:(14)(25) \quad T_{23,2}:(34)(27), \quad T_{23,3}:(23)(47) \\ 
\nonumber
T_{24,2}:(16)(34),  \quad & T_{24,4}:(14)(36) \quad T_{34,3}:(23)(56), \quad T_{34,4}:(25)(36).
\end{align}
The $2$-torsion points exchanged under these permutations are illustrated in Figure~\ref{fig:monodromy_cube}, in which the torsion point $u_i$ corresponds to the vertex with label $i$. Figure~\ref{fig:monodromy_cube} displays, from left to right:
\begin{enumerate}
	\item An example of the orbits of the $\ZZ_2$ action induced by monodromy around a single edge of $\Delta$.
	\item An example of the orbits of the $\ZZ^3_2$ action induced by monodromy around three different edges of $\Delta$ adjacent to a negative vertex.
	\item An example of the orbits of the $\ZZ^3_2$ action induced by monodromy around three different edges of $\Delta$ adjacent to a positive vertex.
\end{enumerate} 
Recall that the monodromy around negative vertices can be deduced from the monodromy around positive vertices by the relations in Theorem \ref{Thm:Marks}. Observe that $u_0$ remains invariant under the action of all of the monodromy matrices $T_{ij,k}$. That is, $u_0$ defines a section of $\breve{f} \colon X \to B$ and hence there is a connected component of the real Lagrangian $\breve{L}_\RR \subset \breve{X}$ homeomorphic to $S^3$. 
\begin{lemma}[{cf.\@\cite[Corollary~$1$]{CBM}}]
\label{lem:connected_cmpts}
The real locus $\breve{L}_\RR$ in the mirror to the quintic threefold consists of two connected components:
\[
\breve{L}_\RR = \breve{\shL}_\RR \textstyle \coprod S^3,
\]
where $\breve{\shL}_\RR$ is a $7$-to-$1$ cover of $B$ branched along the discriminant locus $\Delta \subset B$.
\end{lemma}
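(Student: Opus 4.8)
The plan is to study the restriction $\breve{f}|_{\breve{L}_\RR}\colon\breve{L}_\RR\to B$ as a (branched) covering, first over the smooth locus $B_0$ and then over the discriminant $\Delta$. Over $B_0$ the map $\breve{f}$ is a $T^3$-bundle, and as recalled above the fixed locus of $x\mapsto -x$ on each fibre is exactly the set of $2$-torsion points $u_0,\dots,u_7\in\Hom(\Lambda_b,U(1))\cong\RR^3/\ZZ^3$; hence $\breve{L}_\RR|_{B_0}\to B_0$ is an $8$-sheeted covering whose monodromy is the permutation representation on $\{u_0,\dots,u_7\}$ generated by the double transpositions $T_{ij,k}$ listed in \eqref{Eq:monodromy for the mirror quintic}. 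Since $u_0=(0,0,0)$ is fixed by every $T_{ij,k}$, the sheet through $u_0$ is a section of $\breve{f}|_{B_0}$; I would then observe that $u_0$ persists as the distinguished identity point of every degenerate fibre $\breve{f}^{-1}(p)$, $p\in\Delta$, so that this section extends to a section $s\colon B\to\breve{X}$ whose image is a connected component homeomorphic to $B\cong S^3$. This recovers the observation made just before the lemma and agrees with \cite[Corollary~$1$]{CBM}.

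Next I would set $\breve{\shL}_\RR:=\breve{L}_\RR\setminus s(B)$ and show it is connected. Over $B_0$ it is the $7$-sheeted cover with sheets $u_1,\dots,u_7$, so connectedness reduces to transitivity of the subgroup of $S_7$ generated by the transpositions in \eqref{Eq:monodromy for the mirror quintic} on $\{1,\dots,7\}$, which is an elementary finite check (for instance $T_{12,1}$, $T_{14,4}$, $T_{13,1}$, $T_{12,2}$, $T_{23,3}$ already link all seven indices). Since $B_0$ is dense in $B$ and $\breve{f}$ is proper, $\breve{\shL}_\RR|_{B_0}$ is dense in $\breve{\shL}_\RR$, so connectedness over $B_0$ forces $\breve{\shL}_\RR$ to be connected; moreover $s(B)$ and $\breve{\shL}_\RR$ are disjoint, giving the asserted decomposition into exactly two components.

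Finally, to identify $\breve{\shL}_\RR\to B$ as a $7$-to-$1$ cover branched along $\Delta$, I would invoke the explicit descriptions of the singular fibres in Theorem~\ref{Thm:Marks}: over the interior of an edge of $\Delta$ the governing monodromy is a single double transposition, and in the local model (a nodal elliptic curve times $S^1$) precisely the paired $2$-torsion points collide at the node, so that the map remains a genuine covering away from the edges; over the positive and negative vertices one checks compatibility of the three colliding patterns using the $\ZZ_2^3$-actions displayed in Figure~\ref{fig:monodromy_cube}. Assembling these local pictures shows that $\breve{f}|_{\breve{\shL}_\RR}$ is finite, surjective, open, and unramified precisely over $B_0$, i.e.\ a $7$-fold branched cover with branch locus $\Delta$.

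The step I expect to be the main obstacle is this last local analysis over $\Delta$: one must carefully track how the $2$-torsion points degenerate in the singular fibres, especially at the positive and negative vertices where the fibres are not simply products with $S^1$, and one must confirm that $u_0$ never collides with any $u_i$ for $i\ge 1$, so that the $S^3$-component really is disjoint from $\breve{\shL}_\RR$. For this I would rely on the local models of the singular fibres used in \cite{CBMS} and \cite{DBranes09} together with the monodromy computations already carried out in \cite{AP19}.
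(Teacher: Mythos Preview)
Your proposal is correct and follows the same line as the paper. The paper does not give a formal proof of this lemma; it is stated immediately after the observation that $u_0$ is fixed by all the $T_{ij,k}$ (yielding the $S^3$-section) and with a reference to \cite[Corollary~$1$]{CBM}, leaving the connectedness of the remaining seven sheets and the branched-cover description to the explicit monodromy data in \eqref{Eq:monodromy for the mirror quintic} and the local models from \cite{CBMS,AP19}. Your argument supplies exactly these missing details---the transitivity check on $\{1,\dots,7\}$ and the fibrewise analysis over $\Delta$---so it is a faithful expansion of what the paper sketches rather than a different route.
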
 

\begin{figure}
	\makebox[\textwidth][c]{\includegraphics[scale=2]{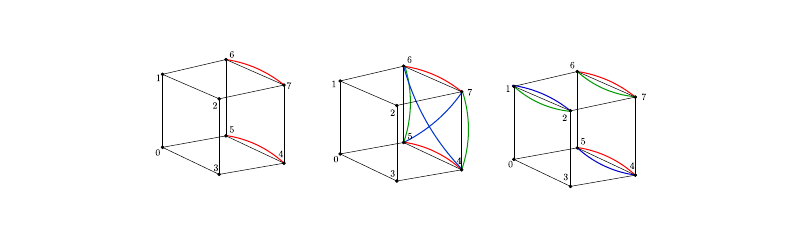}}
	\caption{The action of monodromy around a single branch, and the branches adjacent to a negative and positive vertex of $\Delta$ respectively}
	\label{fig:monodromy_cube}
\end{figure}
In the remaining part of this article we let
\[
\breve{f}_{\RR}\colon \breve{L}_{\RR} \to B \,\ \,\ \mathrm{and} \,\ \,\  \breve{\pi}\colon \breve{\mathcal{L}}_{\RR} \to B
\]
denote the restriction of $f \colon \breve{X} \to B$ to the real Lagrangian $\breve{L}_{\RR}$, and to the connected component $\breve{\mathcal{L}}_{\RR} \subset \breve{L}_{\RR}$, respectively.

\begin{proposition}
\label{pro:orientable}
	The real Lagrangian $\breve{L}_\RR \subset \breve{X}$ is the disjoint union of two orientable $3$-manifolds.
\end{proposition}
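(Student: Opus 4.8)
The plan is to show that $\breve{L}_\RR$ is orientable by exhibiting a consistent choice of orientation on the fibres of $\breve{f}_\RR$ that is preserved by the monodromy action, and then to patch these fibre orientations together with a local orientation of the base $B$ (which is a $3$-sphere, hence orientable) to orient the total space. By Lemma~\ref{lem:connected_cmpts} it suffices to treat the non-trivial component $\breve{\shL}_\RR$, since the other component is an $S^3$ and hence orientable; equivalently, one checks orientability of each connected component directly.

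First I would recall the local structure of $\breve{f}_\RR$ over the open locus $B_0$: here $\breve{f}_\RR$ restricts to a $7$-sheeted covering map, so over a small contractible chart $U \subset B_0$ the preimage is $U \times \{u_1,\ldots,u_7\}$, and an orientation of the total space over $U$ is determined by an orientation of $U$ alone. Since $B$ is orientable, these local orientations glue over $B_0$ provided the deck transformations respect them — but the deck transformations are exactly the permutations $T_{ij,k}$ of \eqref{Eq:monodromy for the mirror quintic}, which permute the sheets and therefore automatically carry an orientation of one sheet-times-base-chart to the corresponding orientation of another. The only subtlety is whether going around a loop in $B_0$ can reverse the base orientation; since $B$ is an orientable $3$-manifold, it cannot. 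This shows $\breve{\shL}_\RR$ restricted over $B_0$ is orientable, i.e. $\breve{\pi}^{-1}(B_0)$ is an orientable $3$-manifold.

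The main obstacle is extending the orientation across the discriminant locus $\Delta$, where the fibres degenerate and $\breve{\pi}^{-1}(\Delta)$ is no longer a covering. Here I would argue that $\Delta$ has codimension at least $2$ in $B$ (it is a trivalent graph inside the $3$-manifold $B$), so $\breve{\pi}^{-1}(\Delta)$ is a closed subset of $\breve{\shL}_\RR$ whose complement is connected, dense and orientable; the obstruction to orientability of $\breve{\shL}_\RR$ is then a class in $H^1(\breve{\shL}_\RR,\ZZ_2)$, and by excision/general position this class is detected on arbitrarily small punctured neighbourhoods of points of $\breve{\pi}^{-1}(\Delta)$. So it is enough to check, for each local model of a singular fibre listed in Theorem~\ref{Thm:Marks} (interior of an edge, negative vertex, positive vertex), that the corresponding piece of $\breve{\shL}_\RR$ — namely the quotient of the model $T^3$-fibration by the relevant $\ZZ_2$ or $\ZZ_2^3$ action on the $2$-torsion points — admits an orientation compatible with the one already chosen on the smooth part. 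For the edge case one has $\breve{\pi}^{-1}(p) \cong (\text{nodal elliptic curve})\times S^1$ modulo a fibrewise involution, and one checks directly that a neighbourhood is orientable; for the two vertex types one uses the explicit orbit pictures in Figure~\ref{fig:monodromy_cube} together with the fact that the $3$-dimensional local models (the figure-eight model and the three-dimensional analogue of a nodal curve) are orientable manifolds and the identifications are by piecewise-linear homeomorphisms preserving the affine, hence orientation, data.

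Concretely, I expect the cleanest route is: (i) fix an orientation of $B$; (ii) over $B_0$, orient $\breve{\pi}^{-1}(B_0)$ by pulling back, using that the covering maps locally split; (iii) observe that the monodromy permutations $T_{ij,k}$ act by permuting sheets and hence preserve this orientation, so it is globally well-defined on $\breve{\pi}^{-1}(B_0)$; (iv) for each of the finitely many local singular models, verify by the explicit descriptions that the orientation extends across $\breve{\pi}^{-1}(\Delta)$ — this is a finite check since $\Delta$ is a finite trivalent graph with two vertex types and one edge type. Combining (i)--(iv) gives an orientation on all of $\breve{\shL}_\RR$; together with the $S^3$ component this proves the proposition. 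The genuinely nontrivial step is (iv), the local analysis at the positive and negative vertices, where the branched-cover structure of $\breve{\pi}$ is most degenerate; everything else is formal.
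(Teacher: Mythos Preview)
Your approach is sound but takes a genuinely different route from the paper. The paper constructs a Heegaard splitting $\breve{\shL}_\RR = \breve{\pi}^{-1}(W_1) \cup \breve{\pi}^{-1}(W_2)$, where $W_1$ is a tubular thickening of $\Delta$ and $W_2$ its complement, argues that both preimages are handlebodies with common boundary an orientable surface, and reads off $H_3(\breve{\shL}_\RR) \cong \ZZ$ from Mayer--Vietoris. This global homological argument has the side benefit of setting up the Heegaard splitting that is reused in \S\ref{sec:heegaard_splitting} to compute $\pi_1(\breve{\shL}_\RR)$ and to cross-check Theorem~\ref{thm:cech_to_derived}. Your covering-space argument is more elementary and more portable, but it does not produce that extra structure.

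Your step~(iv), however, is both unnecessary and slightly confused. Once you have (a)~$\breve{\pi}^{-1}(B_0)$ orientable, being a finite unramified cover of an open subset of $S^3$, and (b)~$\breve{\pi}^{-1}(\Delta)$ of codimension $\geq 2$ in the $3$-manifold $\breve{\shL}_\RR$ (the fibres of $\breve{\pi}$ are finite even over $\Delta$, by Lemma~\ref{lem:connected_cmpts}), the orientation extends for free: the inclusion $\breve{\pi}^{-1}(B_0) \hookrightarrow \breve{\shL}_\RR$ induces a surjection on $\pi_1$, hence an injection $H^1(\breve{\shL}_\RR,\ZZ_2) \hookrightarrow H^1(\breve{\pi}^{-1}(B_0),\ZZ_2)$, and $w_1(\breve{\shL}_\RR)$ restricts to $w_1(\breve{\pi}^{-1}(B_0)) = 0$. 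No case-by-case local-model analysis is required. Note also that in your description of~(iv) you have conflated the fibres of $\breve{f}$ (the singular tori of Theorem~\ref{Thm:Marks}) with the fibres of $\breve{\pi}$ (finite sets of $2$-torsion points inside those tori); this confusion is harmless once~(iv) is dropped. Likewise, step~(iii) is redundant: orientability of a finite cover of an orientable base is automatic and does not need the explicit monodromy permutations.
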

\begin{proof}
	Since one component of $\breve{L}_\RR$ is homeomorphic to $S^3$, it suffices to show that $\breve{\shL}_\RR$ is orientable. Consider a thickening of $\Delta \subset B = \partial \Delta_{\PP^4}$ obtained as follows. 
	\begin{enumerate}
		\item For each vertex $v$ of $\Delta$ fix a closed neighbourhood $W_v$ of $v$, homeomorphic to a $3$-ball.
		\item For each edge $e$ of $\Delta$, with vertices $v_1$ and $v_2$, we fix a small cylinder $W_e$ (homeomorphic to a $3$-ball) containing the intersection of $e$ with the complement of $W_{v_1} \cup W_{v_2}$. We assume that $W_e \cap W_{v_i}$ is a disc intersecting $\Delta$ in a single point for each $i \in \{1,2\}$.
	\end{enumerate}
	Define $W_1$ to be the the $3$-manifold (with boundary) given by the union of $\bigcup_v W_v$ and $\bigcup_e W_e$,
	where the unions are taken over the set of vertices and edges of $\Delta$ respectively. Note that, by construction, $W_1$ is a boundary connect sum of a disjoint set of $3$-balls (we refer to \cite[Remark~$1.3.3$]{GS99} for the definition of the boundary connect sum). Hence, by definition, it is a handlebody. Since $W_1$ as explained in the proof of Proposition \ref{pro:orientable} is obtained as a thickening of the discriminant locus $\Delta$, which is unknotted, the closure of the complement of $W_1$ in $B=S^3$, which we will denote by $W_2$, is also a handle body.
	Choosing an orientation of $B=S^3$ defines an orientation of $W_1$ and $W_2$. Moreover, by the analysis made in \cite[Appendix~A]{AP19} we deduce that $\breve{\pi}^{-1}(W_1)$ is a handlebody as well. Since $W_2$ has non-empty intersection with the discriminant locus in $B$, $\breve{\pi}^{-1}(W_2)$ is a disjoint union of homeomorphic copies of $W_2$, and hence is also a (disconnected) handlebody. In particular we note that
	\[
    \breve{\pi}^{-1}(W_1) \cap \breve{\pi}^{-1}(W_2)
    \]
    is an orientable surface. 
	Applying the Mayer--Vietoris sequence to the decomposition $\breve{\shL}_\RR \subset \breve{\pi}^{-1}(W_1) \cup \breve{\pi}^{-1}(W_2)$, we observe that
	\[
	H_3(\breve{\shL}_\RR) \cong H_2(\breve{\pi}^{-1}(W_1) \cap \breve{\pi}^{-1}(W_2)) \cong \ZZ,
	\]
	and hence $\breve{L}_\RR$ is an orientable manifold.
\end{proof}

It is a well-known fact in topology that every manifold admits a \emph{handle decomposition} and that in the three-dimensional case this is nothing but a \emph{Heegaard splitting}, see \cite[Chapter~$4.3$]{GS99}. By definition, a Heegaard splitting is a decomposition of a $3$-manifold into a pair of handlebodies glued along their boundaries. In particular, writing $\breve{\shL}_\RR = \breve{\pi}^{-1}(W_1) \cup \breve{\pi}^{-1}(W_2)$ determines a Heegaard splitting of $\breve{\shL}_\RR$. This provides a general approach to determining topological invariants of $\breve{\shL}_\RR$, which we exploit in \S\ref{sec:heegaard_splitting} to verify computations made in the proof of Theorem~\ref{thm:cech_to_derived}.

\begin{remark}
\label{rem:assumption}
    Note that Proposition~\ref{pro:orientable} applies in a much more general setting, to any integral affine structure on $S^3$ with simple singularities such that the complement of a thickening of $\Delta$ is a handlebody. In particular, this holds for every integral affine structure constructed in \cite{HZci,GrossBB} associated to a toric Calabi--Yau hypersurface.  
\end{remark}
\section{A \u{C}ech cover for $\breve{L}_{\RR}$}
\label{sec:topology_real_locus}
We describe an open cover 
\[
\U=\{\U_{\sigma_j^d}\}_{j \in J_d}
\] 
of $B$, where $J_d$ indexes $d$-dimensional faces of $\Delta_{\PP^4}$ and $\sigma_j^d$ denotes the $j$th $d$-dimensional face of $\Delta_{\PP^4}$. We will discuss the intersections of open sets in $\U$, and use this data in the following sections to compute the \u{C}ech cohomology groups of the real Lagrangian $\breve{L}_{\RR}$ described in \S\ref{sec:topology_real_locus}. Though we focus our attention to the mirror for the quintic threefold, the analysis in this section can be carried out in more general contexts, for instance when studying fibrations of Calabi--Yau hypersurfaces in smooth toric varieties, as well as more general Calabi--Yau hypersurfaces, as in Haase--Zharkov~\cite{HZci}.

\begin{construction}
	\label{def:coarse_cover}
	Let $\breve{f}\colon \breve{X} \to B$ be the torus fibration on the mirror to the quintic, as in \S\ref{sec:integral_affine_manifolds}.
	Recall that the polyhedral complex $B=\partial \Delta_{\PP^4}$ consists of five $3$-dimensional cells, given by five tetrahedra, which intersects each of the other four along a common two-dimensional face, ten $2$-cells given by the two-faces of the tetrahedra, ten $1$-cells given by edges, and $5$-vertices. We construct the open cover $\U$ as follows.
	\begin{itemize}
		\item[1)] First construct an open cover $\tilde{\U}=\{  \tilde{\U}_{\sigma_j^d} \}_{j\in J_d}$ from $\partial \Delta_{\PP^4}$, indexed by $d$-dimensional faces $\sigma_j^d$ of $\partial \Delta_{\PP^4}$ for $j\in J_d$, by fixing a neighbourhood $\tilde{\U}_{\sigma_j^d}$ of $\sigma_j^d$ homeomorphic to a $3$-ball which retracts to $\sigma_j^d$ for each $d \in \{0,1,2,3\}$ and $j \in J_d$.
		\item[2)] Replace each $\tilde{\U}_{\sigma_j^1} \in \tilde{U}$ by 
		\[
		\tilde{\U}_{\sigma_{j_1}^0} \cup \tilde{\U}_{\sigma_j^1} \cup \tilde{\U}_{\sigma_{j_2}^0}
		\] 
		where by $\sigma_{j_2}^0$ and $\sigma_{j_1}^0$ we denote the vertices adjacent to $\sigma_j^1$.
		\item[3)] Remove each open set $\tilde{\U}_{\sigma^0}$ from $\tilde{\U}$.
	\end{itemize}
	We denote the resulting open cover, obtained from $\tilde{\U}$ by the above steps, as $\U$. Shrinking the open sets $\tilde{\U}_{\sigma^d_j}$ as necessary, we assume that
	\[
	\U_{\sigma_i^2} \cap  \U_{\sigma_j^2} = \emptyset \,\ \mathrm{and} \,\ \U_{\sigma_i^3} \cap  \U_{\sigma_j^3} = \emptyset
	\]
	for any $i \neq j$. Moreover, we assume that
	\[
	\U_{\sigma_i^2} \cap \U_{\sigma_j^1} \neq \emptyset
	\]
	if and only if $\sigma_j^1$ is an edge of $\sigma_i^2$.
\end{construction}

In the remaining part of this section we will study the topology of the open sets $\breve{\pi}^{-1}(U) \subset \breve{\shL}_\RR$ for elements $U \in \U$. 

\begin{lemma}
\label{lem:3dfaces}
Let $\sigma^3$ be one of the $3$-cells of $\Delta_{\PP^4}$. Then 
$\breve{\pi}^{-1}(\U_{\sigma^3})$ is homeomorphic to the disjoint union of seven three-dimensional balls, and so in particular 
\[ H^0(\breve{\pi}^{-1}(\U_{\sigma^3}),\ZZ)=\ZZ^7 \]
and $H^j(\breve{\pi}^{-1}(\U_{\sigma^3}),\ZZ)=0$ for $j \geq 1$.
\end{lemma}

\begin{proof}
The map $\breve{\pi}$ is a $7$-to-$1$ branched cover over $B=\partial \Delta_{\PP^4}$, branched along the discriminant locus, which is contained in the $2$-cells of $\partial \Delta_{\PP^4}$.
As the discriminant locus does not intersect 
$\U_{\sigma^3}$, the restriction of $\breve{\pi}$ over $\U_{\sigma^3}$ is a trivial $7$-to-$1$ cover and so $\breve{\pi}^{-1}(\U_{\sigma^3})$ is homeomorphic to the disjoint union of seven three-dimensional balls. 
\end{proof}

\begin{figure}
	\includegraphics*{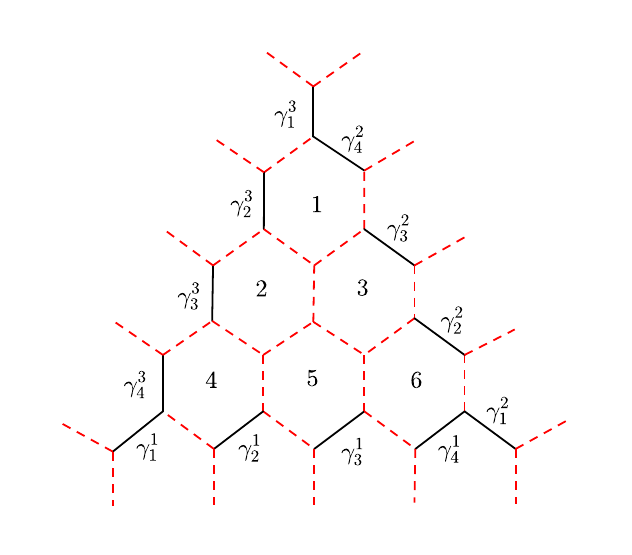}
	\caption{The hexagonal regions $H_i$ contained in a two-dimensional face of $\partial \Delta_{\PP^4}$ labelled by $i$, for $i \in \{1,\ldots , 6\}$.}
	\label{fig:generating_set}
\end{figure}

\begin{lemma}
	\label{lem:hexagonal_regions}
	Let $\sigma^2 \subset \partial \Delta_{\PP^4}$ be one of the $2$-cells of $\Delta_{\PP^4}$. These contain $6$ closed hexagonal regions $H_1,\ldots,H_6$, see Figure \ref{fig:generating_set}. For all $i \in \{1,\ldots,6\}$, the space $\breve{\pi}^{-1}(H_i)$ is the disjoint union of three discs and a connected CW complex $Y_i$. Moreover, we have that
	\[ H_0(Y_i,\ZZ)=\ZZ^4 \,,\,\,\,\,
	H_1(Y_i,\ZZ) \cong \ZZ^3 \oplus \ZZ_2 \,\ \mathrm{and} \,\ H_2(Y_i,\ZZ)= \{0\}
	\]
	for $i\in \{ 1,\ldots,6 \}$.
\end{lemma}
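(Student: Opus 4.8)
The plan is to analyse the $7$-to-$1$ branched cover $\breve\pi$ directly over each hexagonal region, feeding in the monodromy computations of \S\ref{sec:real_locus}. First I would fix the geometry. Since $\sigma^2$ is a lattice triangle of edge length $5$, it has exactly $\binom{4}{2}=6$ interior integral points $p_1,\dots,p_6$, and the regions $H_1,\dots,H_6$ of Figure~\ref{fig:generating_set} are (neighbourhoods of) the closed stars of the $p_i$ in the triangulation of $\sigma^2$, each the union of the six triangles incident to $p_i$ and hence a topological disc. From the description of $\Delta$ in \S\ref{sec:integral_affine_manifolds}, $\Delta\cap H_i$ is a hexagonal circle $C_i$ running through the six negative vertices $c_1,\dots,c_6$ (the barycentres of the six triangles around $p_i$) and six divalent vertices $a_1,\dots,a_6$, together with six legs $\ell_1,\dots,\ell_6$ joining the $c_j$ to $\partial H_i$. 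Thus $H_i\setminus\Delta$ has seven contractible components — a central disc $R_0\ni p_i$ bounded by $C_i$ and six outer quadrilaterals $R_1,\dots,R_6$ — and over each of these the cover is trivial.

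Next I would import the monodromy. By the analysis of \cite[Appendix~A]{AP19}, transferred to the mirror by taking inverse transposes of the affine monodromy (cf.\ \eqref{Eq:monodromy for the mirror quintic}), the monodromies $M_1,\dots,M_6$ around the six edges of the triangulation emanating from $p_i$, together with the leg monodromies $N_j$ (determined from the $M_j$ by the relations at the negative vertices, where the three meridian monodromies multiply to the identity), generate a Klein four-group $V\cong\ZZ_2^2$ acting by its regular representation on four of the seven sheets $u_1,\dots,u_7$ and fixing the other three. This is the key structural input, and one should check which four sheets are moved and that $V$ is indeed Klein and transitive on them. It then follows at once that the three fixed sheets give three disjoint sections of $\breve\pi$ over $H_i$, each homeomorphic to a disc, while the four moved sheets, glued over $\breve\pi^{-1}(\Delta\cap H_i)$ according to the permutations $M_j,N_j$, assemble into a single connected complex $Y_i$ — a connected $4$-fold cover of the disc $H_i$ branched over $C_i\cup\ell_1\cup\dots\cup\ell_6$. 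This establishes $\breve\pi^{-1}(H_i)=(\text{three discs})\sqcup Y_i$ and $H_0(Y_i)=\ZZ$.

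For $H_*(Y_i)$ I would put on $H_i$ the CW structure with $2$-cells $R_0,\dots,R_6$, $1$-cells the twelve arcs of $C_i$, the six legs and the six arcs of $\partial H_i$, and $0$-cells the $c_j$, the $a_j$ and the six leg endpoints, and lift it through $\breve\pi$. Using that $V$ has a single length-$4$ orbit over each $c_j$, that $\langle M_j\rangle$ has two orbits over each $a_j$ and each leg endpoint, and that the $\partial H_i$-arcs are unbranched, the lifted complex $Y_i$ acquires $28$ two-cells, $60$ one-cells and $30$ zero-cells, so $\chi(Y_i)=30-60+28=-2$, consistent with $1-3+0$. Reading the integer incidence numbers of $\partial_2$ and $\partial_1$ off the double transpositions $M_j,N_j$ together with a compatible choice of orientations for the lifted cells, a direct computation of this chain complex gives $H_1(Y_i)\cong\ZZ^3\oplus\ZZ_2$ and $H_2(Y_i)=0$; the answer is independent of $i$ because all interior lattice points have the same local affine and discriminant structure.

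The conceptual steps above are routine once the monodromy is in hand; the main obstacle will be the bookkeeping in the last step — fixing compatible orientations on the lifted cells, extracting the correct signs in $\partial_2$ from the $M_j,N_j$ (this is where the $\ZZ_2$ torsion is generated, and where a slip would replace $\ZZ^3\oplus\ZZ_2$ by, say, $\ZZ^4$ or $\ZZ^3\oplus\ZZ_2^2$), and then computing the Smith normal form of the resulting $60\times28$ boundary matrix to see the single $\ZZ_2$ elementary divisor and the vanishing of $H_2$. A way to shorten this is a Mayer--Vietoris splitting of $Y_i$ into $\breve\pi^{-1}(\overline{R_0})$ — a $4$-fold branched cover of a disc over its boundary circle, whose homology is computed from the monodromy around $C_i$ alone — and $\breve\pi^{-1}\big(\overline{H_i\setminus R_0}\big)$, which retracts onto a graph over the legs, with overlap an annular neighbourhood of $C_i$; the connecting homomorphism still encodes the $M_j$ but through a much smaller matrix. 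Orientability of $\breve{\shL}_\RR$ (Proposition~\ref{pro:orientable}) and the relation $M_6\cdots M_1=\mathrm{id}$ (forced because a pushoff of $C_i$ into $R_0$ bounds in $B_0$) serve as consistency checks throughout.
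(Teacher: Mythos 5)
Your overall strategy is sound and close in spirit to the paper's: you correctly isolate the Klein four-group generated by the branch monodromies within a fixed two-face (for the face $\sigma_{12}$, in the notation of \S\ref{sec:real_locus}, this is $\{\mathrm{id},(12)(67),(16)(27),(17)(26)\}$, acting regularly on four sheets and fixing three), which gives the three disc-sections and the single connected complex $Y_i$, and your cell count giving $\chi(Y_i)=-2$ is consistent with the claimed answer. The genuine gap is that the decisive step --- the computation that distinguishes $\ZZ^3\oplus\ZZ_2$ from, say, $\ZZ^4$ or $\ZZ^3\oplus\ZZ_2^2$, and that shows $H_2(Y_i)=0$ --- is never carried out: you assert that ``a direct computation of this chain complex gives'' the answer, while yourself flagging the orientation/sign bookkeeping of the $60\times 28$ boundary matrix as the main obstacle and the place where a slip would change the torsion. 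Since the single $\ZZ_2$ is precisely the content of the lemma that the rest of the paper rests on (it produces the $\ZZ_2^{60}$ entries in Theorems \ref{thm:cech_to_derived_Z2} and \ref{thm:cech_to_derived} and the bijection with interior integral points in Theorem \ref{Relating to integral points}), a complete proof must exhibit that matrix, or an equivalent small presentation, explicitly.

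The paper's proof shows how to make this step small, and it is essentially the Mayer--Vietoris shortcut you sketch at the end. Note first a difference of convention: in the paper $H_i$ is the closure of a component of $\sigma^2\setminus\Delta$ around an interior integral point, so $\Delta$ meets $H_i$ only along $\partial H_i$; your $H_i$ is the closed star, with the circle $C_i$ and the legs in its interior. The preimages are homotopy equivalent, but the paper's convention (also used in Lemma \ref{lem:preimage_face}) makes the computation short: $V_2:=\breve{\pi}^{-1}(\Int (H_i))\cap Y_i$ is a disjoint union of four discs, $V_1$ is a neighbourhood of the graph $\breve{\pi}^{-1}(\partial H_i)\cap Y_i$, which is homotopy equivalent to a wedge of seven circles, and Mayer--Vietoris reduces everything to an explicit injective $7\times 4$ matrix $A$ recording how the four boundary circles of the discs wrap around that graph. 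Discarding one generator yields the presentation of $H_1(Y_i)$ by the relations $v_i+v_{i+1}+v_{i+3}+v_{i+4}=0$ for $i\in\{1,2,3\}$, whence $\ZZ^3\oplus\ZZ_2$, and injectivity of $A$ together with $H_2(V_1)=H_2(V_2)=0$ gives $H_2(Y_i)=0$. To complete your argument you should either produce your $\partial_2$ (or the smaller connecting map in your splitting along $C_i$) with explicit entries, or adopt this reduction; the cell counts and the Euler characteristic check alone do not pin down the torsion.
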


\begin{proof}
	From the description of the monodromy action on fibres of $\breve{\pi}$ given in \S\ref{sec:real_locus}, we observe that three points (labelled $1$, $2$, and $3$ in Figure~\ref{fig:monodromy_cube}) are monodromy invariant around loops around any branch of $\Delta$ which is contained in $\sigma$. That is, $\breve{\pi}^{-1}(H_i)$ consists of $4$ connected components, three of which define sections of $\breve{\pi}$ over $H_i$ and are hence homeomorphic to discs. We let $Y_i$ denote the remaining connected component.
	Moreover, we let $V_1$ be a neighbourhood of $\breve{\pi}^{-1}(\partial H_i) \cap Y_i$ which retracts onto $\breve{\pi}^{-1}(\partial H_i) \cap Y_i$, and set $V_2 := \breve{\pi}^{-1}(\Int(H_i)) \cap Y_i$. The space $V_1$ is homotopy equivalent to the union of six circles as shown in Figure~\ref{fig:attaching}, and we note that $V_1$ is homotopy equivalent to the wedge union of seven circles. Moreover, $V_2$ is homeomorphic to the disjoint union of four discs (recalling that, away from $\Delta$, $Y_i$ is a $4$-to-$1$ cover of $H_i$) and $V_1 \cap V_2$ is the disjoint union of four annuli, each contained in a unique connected component of $V_2$.
	
    Since the map $H_0(V_1\cap V_2,\ZZ) \to H_0(V_1,\ZZ) \oplus H_0(V_2,\ZZ)$ is injective, part of the Mayer--Vietoris sequence associated to the decomposition $Y_i = V_1 \cup V_2$ takes the following form. 
	\begin{equation}
	\label{eq:mayer_vietoris}
	   H_1(V_1\cap V_2,\ZZ) \to H_1(V_1,\ZZ) \oplus H_1(V_2,\ZZ) \to H_1(Y_i,\ZZ) \to 0.
	\end{equation}
	Applying our descriptions of the spaces $V_1$, $V_2$, and $V_1\cap V_2$, the sequence \eqref{eq:mayer_vietoris} has the form
	\[
	\ZZ^4 \stackrel{A}\lra \ZZ^7 \to H_1(Y_i,\ZZ) \to 0.
	\]
	We fix a basis of $H_1(V_1\cap V_2,\ZZ) \cong \ZZ^4$ by orienting the four sheets of the covering $V_1\cap V_2 \to \breve{\pi}(V_1 \cap V_2)$ clockwise. We also fix a basis of $H_1(V_1,\ZZ) \cong \ZZ^7$ by requiring that the first basis element corresponds to the cycle defined by the arcs in Figure~\ref{fig:attaching} with label $1$, oriented clockwise. The remaining six elements are chosen to be the homology classes of the six circles shown in Figure~\ref{fig:attaching}, with the indicated orientation, which map to edges of the hexagon $H_i$. Note that these conventions require that the plane containing the hexagon (and hence $\sigma^2$) is itself oriented. We achieve this by recalling that the vertices of $\Delta_{\PP^4}$ are labelled by elements of $\{1,\ldots, 5\}$, and requiring that the vertices of $\sigma^2$ appear in anti-clockwise order in the plane. Fixing these bases we have that
	\[
	A =
	\begin{pmatrix}
	1 & 1 & 1 & 1 \\
	0 & 1 & 1 & 0 \\
	0 & 1 & 0 & 1 \\
	0 & 0 & 1 & 1 \\
	0 & 1 & 1 & 0 \\
	0 & 1 & 0 & 1 \\
	0 & 0 & 1 & 1
	\end{pmatrix}.
	\]
	Noting that the first standard basis vector is in the image of $A$, we can discard it and present $H_1(Y_i,\ZZ)$ as a quotient of $\ZZ^6$. This quotient is generated by the standard basis elements $v_1,\ldots,v_6$ of $\ZZ^6$, subject to the relations $v_i+v_{i+1}+v_{i+3}+v_{i+4} = 0$ for $i \in \{1,2,3\}$, where the addition of indices is interpreted cyclically (in particular, the third relation is $v_3+v_4+v_6+v_1$). This quotient is isomorphic to $\ZZ^3 \oplus \ZZ_2$, and the elements $v_i$, $i \in \{1,2,3\}$ generate the torsion-free group, while $v_1+v_4$ generates the torsion subgroup. Finally, we note that, since $A$ is injective, and $H_2(V_1,\ZZ) = H_2(V_2,\ZZ) = \{0\}$, the group $H_2(Y_i,\ZZ)$ is trivial.
\end{proof}

\begin{lemma}
	\label{lem:preimage_face}
	Let $\sigma^2$ be a two-dimensional face of $\partial \Delta_{\PP^4}$, then 
	\begin{align*}
		H^0(\breve{\pi}^{-1}(\U_{\sigma^2}),\ZZ) \cong \ZZ^{4} 
		\,,\,\,\,\,
	H^1(\breve{\pi}^{-1}(\U_{\sigma^2}),\ZZ) \cong \ZZ^{12} \,,\,\,\,\,
	H^2(\breve{\pi}^{-1}(\U_{\sigma^2}),\ZZ) \cong \ZZ_2^6 \,,
	\end{align*}
and 
\begin{align*}
	H^0(\breve{\pi}^{-1}(\U_{\sigma^2}),\ZZ_2) \cong \ZZ_2^{4} 
		\,,\,\,\,\,
	H^1(\breve{\pi}^{-1}(\U_{\sigma^2}),\ZZ_2) \cong \ZZ_2^{18} 
	\,,\,\,\,\,
	H^2(\breve{\pi}^{-1}(\U_{\sigma^2}),\ZZ_2) \cong \ZZ_2^6 \,.
	\end{align*}
\end{lemma}
\begin{proof}
	
	
	We first note that $\breve{\pi}^{-1}(\U_{\sigma^2})$ retracts onto $\breve{\pi}^{-1}(\sigma^2)$. We observe that the space $\breve{\pi}^{-1}(\sigma^2)$ contains four connected components, three of which are homeomorphic to discs (sections of the restriction of $\breve{\pi}$ to $\sigma^2$). The remaining component retracts onto the wedge union of $\bigcup^6_{i=1}Y_i$ and three circles. These three circles are contained in the pre-images of the segments labelled $\gamma_1^i$ for $i \in \{1,2,3\}$ in Figure~\ref{fig:generating_set}. Thus, setting 
	\[
	\bar{Y}_k := \bigcup^k_{i=1}Y_i,
	\]
	and $Y := \bar{Y}_6$, we conclude that 
	\[
	H^1(\breve{\pi}^{-1}(\U_{\sigma^2}),\ZZ) \cong H^1(Y,\ZZ) \oplus \ZZ^3 \,,\,\,\,\,
	H^2(\breve{\pi}^{-1}(\U_{\sigma^2}),\ZZ) \cong H^2(Y,\ZZ)\] and
	\[
	H^1(\breve{\pi}^{-1}(\U_{\sigma^2}),\ZZ_2) \cong H^1(Y,\ZZ_2) \oplus \ZZ_2^3 \,,\,\,\,\,
	H^2(\breve{\pi}^{-1}(\U_{\sigma^2}),\ZZ_2) \cong H^2(Y,\ZZ_2)\,.\]
	\begin{figure}
		\includegraphics*{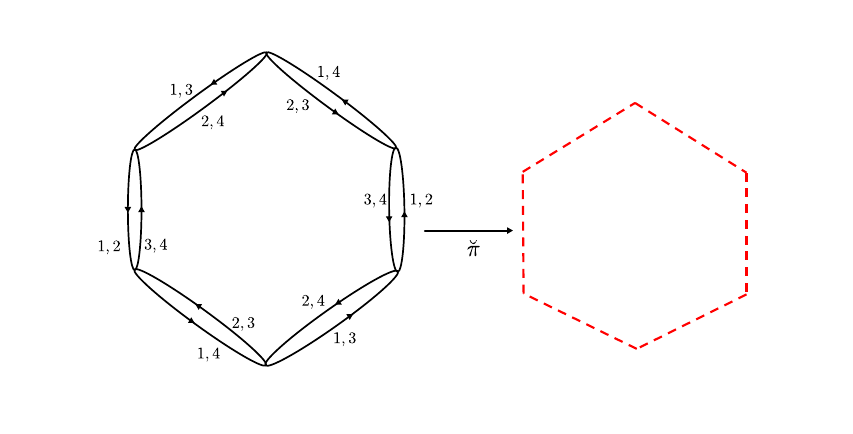}
		\caption{The restriction of $\breve{\pi}$ to the pre-image of a hexagonal region $H_i$.}
		\label{fig:attaching}
	\end{figure}

	Fixing a value of $k \in \{1,\ldots, 5\}$, and using that 
	$H_2(Y_{k+1},\ZZ)=0$ by Lemma~\ref{lem:hexagonal_regions}, 
	part of the Mayer--Vietoris sequence associated to the decomposition $\bar{Y}_{k+1} = Y_{k+1} \cup \bar{Y}_k$ has the form
	\[
	0 \to H_2(\bar{Y}_{k},\ZZ) \to
	H_2(\bar{Y}_{k+1},\ZZ)
	\to \ZZ^{l_k} \stackrel{M}{\lra} H_1(Y_{k+1},\ZZ) \oplus H_1(\bar{Y}_k,\ZZ) \to H_1(\bar{Y}_{k+1},\ZZ) \to 0,
	\]
	where $l_k \in \{1,2,3\}$ denotes the number of circles in the intersection $Y_{k+1} \cap \bar{Y}_k$.
	
	By Lemma~\ref{lem:hexagonal_regions}, we have $H_1(Y_k,\ZZ) \cong \ZZ^3\oplus \ZZ_2$  for all $k \in \{1,\ldots, 6\}$. The component $\ZZ^{l_k} \to H_1(Y_{k+1},\ZZ)$ of the map $M$ is injective, and hence $M$ is itself injective. It follows that  $H_2(\bar{Y}_{k},\ZZ) \simeq  H_2(\bar{Y}_{k+1},\ZZ)$. Using the base case $H_2(Y_1,\ZZ)=0$ given by  Lemma~\ref{lem:hexagonal_regions}, we deduce by induction that $H_2(\bar{Y}_{k+1},\ZZ)=0$ for every $k \in \{1,\dots,5\}$, and in particular 
	\[ H_2(Y,\ZZ)=0 \,.\]
	Moreover, the image of $M$ is a saturated subgroup of the torsion free part of $H_1(Y_{k+1},\ZZ) \oplus H_1(\bar{Y}_k,\ZZ)$. Thus, assuming inductively that $H_1(\bar{Y}_{k},\ZZ) \cong \ZZ^{a_k} \oplus \ZZ_2^k$ for some $a_k \in \ZZ_{>0}$, we have that
	\[
	H_1(\bar{Y}_{k+1},\ZZ) \cong \ZZ^{a_k+3-l_k} \oplus \ZZ_2^{k+1}.
	\]
	This, together with the base case that $H_1(\bar{Y}_1,\ZZ) = H_1(Y_1,\ZZ) \cong \ZZ^3 \oplus \ZZ_2$ verifies that
	\[
	H_1(\bar{Y}_k,\ZZ) \cong \ZZ^{a_k} \oplus \ZZ_2^k
	\]
	for all $k \in \{1,\ldots,6\}$. Moreover, computing $a_k$ using the formula $a_{k+1} = a_k + 3 - l_k$, we deduce that 
	\[
	H_1(Y,\ZZ) = H_1(\bar{Y}_6,\ZZ) \cong \ZZ^9 \oplus \ZZ_2^6 \,.
	\]
	As $H_2(Y,\ZZ)=0$, we have using the universal coefficient theorem that
	\[ H^1(Y,\ZZ)=\mathrm{Hom}(H_1(Y,\ZZ),\ZZ)=\ZZ^9 \,,\,\,\,\, 
	H^2(Y,\ZZ)=\mathrm{Ext}^1(H_1(Y,\ZZ),\ZZ)=\ZZ_2^6 \,,\] 
	and 
	\[ H^1(Y,\ZZ_2)=\mathrm{Hom}(H_1(Y,\ZZ),\ZZ_2)=\ZZ_2^{15}\,,\,\,\,\, 
	H^2(Y,\ZZ_2)=\mathrm{Ext}^1(H_1(Y,\ZZ),\ZZ_2)=\ZZ_2^6 \,.\]
	
\end{proof}



\begin{lemma}
\label{lem:topology_for_edges}
For each edge $\sigma^1 \subset \partial \Delta_{\PP^4}$, there is a homeomorphism between $\breve{\pi}^{-1}(U_{\sigma^1})$ and the disjoint union of $3$ copies of an open $3$-manifold which retracts onto the wedge union of $4$ circles, and a $3$-dimensional ball.
In particular, we have 
\[ H_0(\breve{\pi}^{-1}(U_{\sigma^1}),\ZZ)=\ZZ^4 \,,\,\,\,\,H_1(\breve{\pi}^{-1}(U_{\sigma^1}),\ZZ)=\ZZ^{12} \,,\,\,\,\,
H_2(\breve{\pi}^{-1}(U_{\sigma^1}),\ZZ)=0 \,,\]
\[ H^0(\breve{\pi}^{-1}(U_{\sigma^1}),\ZZ)=\ZZ^4 \,,\,\,\,\,H^1(\breve{\pi}^{-1}(U_{\sigma^1}),\ZZ)=\ZZ^{12} \,,\,\,\,\,
H^2(\breve{\pi}^{-1}(U_{\sigma^1}),\ZZ)=0 \,,\]
and 
\[H^0(\breve{\pi}^{-1}(U_{\sigma^1}),\ZZ_2)=\ZZ_2^4 \,,\,\,\,\, H^1(\breve{\pi}^{-1}(U_{\sigma^1}),\ZZ_2)=\ZZ_2^{12} \,,\,\,\,\,
H^2(\breve{\pi}^{-1}(U_{\sigma^1}),\ZZ_2)=0 \,,\]
\end{lemma}
\begin{proof}
    We recall that each vertex of $\Delta$ which is contained in $\sigma^1$ is a \emph{positive vertex}. Fixing a base point $x \in \sigma^1 \setminus \Delta$, monodromy actions on $\breve{\pi}^{-1}(x)$ along loops passing around branches of $\Delta$ containing this vertex are shown in the central image of Figure~\ref{fig:monodromy_cube}. Hence one element of $\breve{\pi}^{-1}(x)$ (labelled $3$ in Figure~\ref{fig:monodromy_cube}) is contained in a section of $\breve{\pi}^{-1}(\U_{\sigma^1})$ over $\U_{\sigma^1}$. Moreover, the pairs of points labelled $\{1,2\}$, $\{4,5\}$, and $\{6,7\}$ lie in distinct connected components of $\breve{\pi}^{-1}(\U_{\sigma^1})$. Letting $x$ vary along $\sigma^1$, elements in these three pairs come together over the five points in the intersection $\Delta \cap \sigma^1$, and hence each of these components retracts onto the wedge union of $4$ circles.
\end{proof}

\begin{lemma}
\label{lem:topology_for_intersections}
Given a two-dimensional face $\sigma^2$ of $\Delta_{\PP^4}$, and an edge $\sigma^1$ of $\sigma^2$, we have that
\[
\breve{\pi}^{-1}(\U_{\sigma^1} \cap \U_{\sigma^2})
\]
contains five connected components. Three of these five components are homeomorphic to a $3$-ball, while the other two retract onto the wedge union of $4$ circles.
In particular, we have 
\[ H_1(\breve{\pi}^{-1}(\U_{\sigma^1} \cap \U_{\sigma^2}),\ZZ)=\ZZ^8 \,,\,\,\,\,
H_2(\breve{\pi}^{-1}(\U_{\sigma^1} \cap \U_{\sigma^2}),\ZZ)=0 \,,\]
\[ H^1(\breve{\pi}^{-1}(\U_{\sigma^1} \cap \U_{\sigma^2}),\ZZ)=\ZZ^8 \,,\,\,\,\,
H^2(\breve{\pi}^{-1}(\U_{\sigma^1} \cap \U_{\sigma^2}),\ZZ)=0 \,,\]
and 
\[ H^1(\breve{\pi}^{-1}(\U_{\sigma^1} \cap \U_{\sigma^2}),\ZZ_2)=\ZZ_2^8 \,,\,\,\,\,
H^2(\breve{\pi}^{-1}(\U_{\sigma^1} \cap \U_{\sigma^2}),\ZZ_2)=0 \,,\]
\end{lemma}
\begin{proof}
This result follows from the same analysis used to prove Lemma~\ref{lem:topology_for_edges}. Note that $\U_{\sigma^1} \cap \U_{\sigma^2}$ contains no vertices of $\Delta$, and monodromy actions along loops around a single segment of $\Delta$ are illustrated in the left-hand image of Figure~\ref{fig:monodromy_cube}.
\end{proof}



\section{The integral cohomology of $\breve{L}_\RR$}
\label{sec:betti_numbers}


The main result of this section concerns the computation of a \u{C}ech-to-derived spectral sequence for the sheaf $\breve{\pi}_\star\ZZ$. We recall that the \u{C}ech-to-derived spectral sequence for a sheaf $\shF$ on $B$ with open cover $\U$ has the form
\[
E_2^{p,q}= \breve{H}^p(\U,\shH^q(\cdot,\shF)) \implies H^{p+q}(B,\shF) \,,
\]
where $\shH^q(\cdot,\shF)$ is the presheaf 
$U \mapsto H^q(U,\shF)$ on $B$, and for every presheaf
$\shG$ on $B$, $\breve{H}^p(\U,\shG)$ denotes the 
p'th \u{C}ech cohomology group of $\shG$
with respect to the cover $\U$.
Analysing this sequence for $\shF=\breve{\pi}_\star\ZZ$,
where $\breve{\pi}$ is 
the branched covering $\breve{\pi} \colon \breve{\shL}_\RR \to B$,
we obtain the following result.
\begin{theorem}
	\label{thm:cech_to_derived}
	Let $\U$ be the open cover of $B$ defined in 
	Construction~\ref{def:coarse_cover}.
	The \u{C}ech-to-derived spectral sequence 
	\[
E_2^{p,q}= \breve{H}^p(\U,\shH^q(B,\breve{\pi}_\star\ZZ)) \implies H^{p+q}(B,\breve{\pi}_\star\ZZ) \,,
\]
	for the sheaf $\breve{\pi}_\star\ZZ$, has the $E_2$ page
	\[
	\xymatrix@R-2pc{
		\ZZ_2^{60} & & & \\
		0 & \ZZ_2^{36} \oplus \ZZ_4^6 \oplus \ZZ_8^4 \oplus \ZZ^2_{32} & & \\
		\ZZ & 0 & \ZZ_2 & \ZZ.
	}
	\]
	Moreover, this spectral sequence degenerates at the $E_2$ page.
\end{theorem}

\begin{proof}
	
	We need to compute seven \u{C}ech cohomology groups, as displayed in the statement of Theorem~\ref{thm:cech_to_derived}. To describe these cohomology groups we first define the presheaf
	\[
	\shH^j_\ZZ \colon U \mapsto H^j(\breve{\pi}^{-1}(U), \ZZ)
	\]
	for open sets $U \subset B$, $j \in \ZZ_{\geq 0}$. The non-zero \u{C}ech cohomology groups we need to compute are:
	\[
	\xymatrix@R-2pc{
		\breve{H}^0(\U,\shH^2_\ZZ) & & & \\
		\breve{H}^0(\U,\shH^1_\ZZ) & \breve{H}^1(\U,\shH^1_\ZZ) & & \\
		\breve{H}^0(\U,\shH^0_\ZZ) & \breve{H}^1(\U,\shH^0_\ZZ) & \breve{H}^2(\U,\shH^0_\ZZ) & \breve{H}^3(\U,\shH^0_\ZZ).
	}
	\]
	These \u{C}ech cohomology groups are obtained from associated \u{C}ech complexes (which form the $E_1$ page of this spectral sequence). We let
	\[
	\breve{C}^i(\U,\shH^j_\ZZ)
	\]
	denote the group of \u{C}ech $i$-cochains for the presheaf $\shH^j_\ZZ$ associated to the open cover $\U$.

    Recall from Construction~\ref{def:coarse_cover} that the open cover $\U$ consists of 
    \begin{enumerate}
        \item $10$ open sets $\U_{\sigma^1}$, indexed by the $1$-dimensional faces $\sigma^1$ of $B=\partial \Delta_{\PP^4}$.
        \item 10 open sets $\U_{\sigma^2}$, indexed by the $2$-dimensional faces $\sigma^2$ of $B=\partial \Delta_{\PP^4}$.
        \item 5 open sets $\U_{\sigma^3}$, indexed by the $3$-dimensional faces $\sigma^3$ of $B=\partial \Delta_{\PP^4}$.
    \end{enumerate}

	By Lemma~\ref{lem:topology_for_edges}, 
	we have, for every $\sigma^1$,
	\begin{equation}
	\label{eq:cohomology_edges}
	H^i(\breve{\pi}^{-1}(\U_{\sigma^1}),\ZZ) =
	\begin{cases}
	\ZZ^4 & i=0 \\
	 \ZZ^{12} & i = 1\\
	 0 & i = 2 \,.
	\end{cases}
	\end{equation}
	
	By Lemma~\ref{lem:preimage_face}, we have, for every 
	$\sigma^2$,
	\begin{equation}
	\label{eq:cohomology_faces}
	H^i(\breve{\pi}^{-1}(\U_{\sigma^2}),\ZZ) =
	\begin{cases}
	\ZZ^4 & i=0 \\
	 \ZZ^{12} & i = 1\\
	 \ZZ^6_2 & i = 2.
	\end{cases}
	\end{equation}
	
	By Lemma \ref{lem:3dfaces}, we have, for every $\sigma^3$,  
	\begin{equation}
	\label{eq:cohomology_3faces}
	H^i(\breve{\pi}^{-1}(\U_{\sigma^3}),\ZZ) =
	\begin{cases}
	\ZZ^7 & i=0 \\
	 0 & i \geq 1\,.
	\end{cases}
	\end{equation}

	Since the $10$ open sets $\U_{\sigma^2}$ are the only elements of $\U$ whose pre-image under $\breve{\pi}$ has non-zero second cohomology, we have 
	\[
	\breve{H}^0(\U,\shH^2_\ZZ) \cong \breve{C}^0(\U,\shH^2_\ZZ) \cong \ZZ_2^{10 \times 6} = \ZZ^{60} \,.
	\]

    The cohomology groups $\breve{H}^i(\U,\shH^1_\ZZ)$ for $i \in \{1,2\}$ are determined by the two-term \u{C}ech complex
    \[
    \breve{C}^0(\U,\shH^1_\ZZ) \stackrel{\delta}\lra \breve{C}^1(\U,\shH^1_\ZZ).
    \]
    The group $\breve{C}^0(\U,\shH^1_\ZZ)$ is the sum of first cohomology groups of the preimages by $\breve{\pi}$ of the 
    $10$ open sets $U_{\sigma^1}$ and of the $10$ open sets 
    $U_{\sigma^2}$.
    Using  \eqref{eq:cohomology_edges} and \eqref{eq:cohomology_faces}, we obtain
    \[
    \breve{C}^0(\U,\shH^1_\ZZ) \cong \ZZ^{12\times 10 + 12\times 10} = \ZZ^{240}.
    \]
    The group $\breve{C}^1(\U,\shH^1_\ZZ)$ is the sum of the cohomology groups
    \[
    H^1(\breve{\pi}^{-1}(\U_{\sigma^1_j} \cap \U_{\sigma^2_k}),\ZZ)
    \]
    where $\sigma^1_j$ is an edge of $\sigma^2_k$. By Lemma~\ref{lem:topology_for_intersections} these groups are all isomorphic to $\ZZ^8$. Since there are $30$ such intersections, we have that
    \[
    \breve{C}^1(\U,\shH^1_\ZZ) \cong \ZZ^{30\times 8} = \ZZ^{240},
    \]
    and hence the map $\delta$ is determined by a $240 \times 240$ integer matrix $[\delta]$. Magma~\cite{Magma} source code for this construction is included in the supplementary material~\cite{Supplementary}. The computation of $\breve{H}^i(\U,\shH^1_\ZZ)$ for $i \in \{0,1\}$ follows from the Smith normal form of $[\delta]$.

	Finally, we consider the bottom row of the spectral sequence appearing in Theorem~\ref{thm:cech_to_derived}. We compute the ranks of the groups in the corresponding \u{C}ech complex by counting connected components lying over each open set. We obtain the sequence
	\[
	0 \lra \ZZ^{115} \stackrel{\delta_0}\lra \ZZ^{710} \stackrel{\delta_1}\lra \ZZ^{1190} \stackrel{\delta_2}\lra \ZZ^{595} \lra 0,
	\]
	which we verify has Euler characteristic zero. 
	With more details, there are $5$ (resp. $10$ and $10$) open subsets of the form 
	$\U_{\sigma^3}$ (resp.\ $\U_{\sigma^2}$ and $\U_{\sigma^3}$) over which lie $7$ (resp.\ $4$ and $4$) connected components, and 
	$5 \times 7+10 \times 4+10 \times 4=115$.
	There are $60$ (resp. $60$, $30$ and $20$) triple intersections  of types
	$\U_{\sigma^3} \cap \U_{\sigma^2} \cap \U_{\sigma_1}$ (resp.\ 
		$\U_{\sigma^3} \cap \U_{\sigma^1} \cap \U_{\sigma_1}$ , $\U_{\sigma^2} \cap \U_{\sigma^1} \cap \U_{\sigma_1}$  and 	$\U_{\sigma^1} \cap \U_{\sigma^1} \cap \U_{\sigma_1}$ ) over which lie $7$ (resp.\ $7$, $7$ and $7$) connected components, and 
	$60 \times 7+60 \times 7+30 \times 7+20 \times 7=1190$.
		There are $20$ (resp. $30$, $30$ and $30$) double intersections  of types
	$\U_{\sigma^3} \cap \U_{\sigma^2}$ (resp.\ 
		$\U_{\sigma^3} \cap \U_{\sigma^1}$, $\U_{\sigma^2} \cap \U_{\sigma^1}$ and 	$\U_{\sigma^1} \cap \U_{\sigma^1}$) over which lie $7$ (resp.\ $7$, $5$ and $7$) connected components, and 
	$20 \times 7+30 \times 7+30 \times 5+30 \times 7=710$.
	There are $60$ (resp.\ $20$ and $5$) quadruple intersections of the form $\U_{\sigma^3} \cap\U_{\sigma^2} \cap \U_{\sigma^1}  \cap \U_{\sigma^1}$
	(resp.\ $\U_{\sigma^3} \cap\U_{\sigma^1} \cap \U_{\sigma^1}  \cap \U_{\sigma^1}$ and $\U_{\sigma^1} \cap\U_{\sigma^1} \cap \U_{\sigma^1}  \cap \U_{\sigma^1}$)
	 over which lie $7$ (resp.\ $7$ and $7$) connected components,
and $60 \times 7+20 \times 7+5 \times 7=595$.
	The maps $\delta_1$ and $\delta_2$ are computed using Magma and source code for this computation is included in the supplementary material. In fact, since the cochain groups are torsion free,  the Smith normal form of the \u{C}ech differentials immediately determines the analogous result using the coefficient ring $\ZZ_2$.

	We note that degeneration of the spectral sequence at the $E_2$ page follows immediately from the fact that the only homomorphism from a torsion group to a free group is the zero map.
\end{proof}

While Theorem~\ref{thm:cech_to_derived} does not determine the integral cohomology of $\breve{\shL}_\RR$ up to isomorphism we can deduce the following properties.

\begin{corollary}
\label{Cor: integer cohomology}
We can describe the cohomology groups of $\breve{\shL}_\RR$ as follows.
\begin{enumerate}
    \item $H^0(\breve{\shL}_\RR,\ZZ) \cong H^3(\breve{\shL}_\RR,\ZZ) \cong \ZZ$.
    \item $H^1(\breve{\shL}_\RR,\ZZ) \cong 0$.
    \item $H^2(\breve{\shL}_\RR,\ZZ) \cong T$, where $T$ is a $2$-primary finite abelian group such that every element has order $\leq 2^7$.
\end{enumerate}
\end{corollary}
\begin{proof}
   Since $\breve{\shL}_\RR$ is connected and oriented we have that $H^0(\breve{\shL}_\RR,\ZZ) \cong H^3(\breve{\shL}_\RR,\ZZ) \cong \ZZ$. The remaining items follow immediately from the description of the $E_2$ page of the spectral sequence which appears in Theorem~\ref{thm:cech_to_derived}. In particular, the groups $E^{0,2}_2$ and $E^{1,1}_2$ are graded pieces of a filtration on $H^{p+q}(\breve{\shL}_\RR,\ZZ)$. Hence $H^2(\breve{\shL}_\RR,\ZZ)$ has a filtration with graded pieces equal to $\ZZ_2^{60}$, $\ZZ_2^{36} \oplus \ZZ_4^6\oplus\ZZ_8^4 \oplus \ZZ_{32}^2$, and $\ZZ_2$ respectively. We note that the maximal order of an element in such an extension is $128$.
  \end{proof}
We note that, since $H^2(\breve{\shL}_\RR,\ZZ_2) \cong \ZZ_2^{101}$, the filtration of $H^2(\breve{\shL}_\RR,\ZZ)$ described in the proof of Corollary~\ref{Cor: integer cohomology} does not split.
\section{The mod two cohomology of $\breve{L}_\RR$ and Hodge numbers of $\breve{X}$}
\label{sec:mod2_cohomology}

\subsection{Hodge numbers \`{a} la Batyrev}
\label{subsec:Batyrev Hodge}

It is shown in \cite{B} that one can construct a family of Calabi--Yau toric hypersurfaces from a given reflexive lattice polytope. Moreover, it is shown that the Hodge numbers of these hypersurfaces can be computed by enumerating integral points on the associated polytope.

\begin{example}
A toric degeneration of the quintic threefold is given by 
\begin{equation}
\label{Eq: quintic family}    
\shX=(x_0x_1x_2x_3x_4+tf_5) \subset \PP^4\times\AA^1_t    
\end{equation}
where $f_5$ is a degree $5$ homogeneous polynomial. A mirror $\breve{X}$ to the quintic threefold can be obtained via a crepant resolution of a quotient of the general fiber of the family \eqref{Eq: quintic family}, see, for example, \cite{Auroux} for further details. Moreover, there is a toric degeneration of the mirror such that the corresponding intersection complex is given by $B = \partial \Delta_{\PP^4}$, together with 
 the polyhedral decomposition of $\partial \Delta_{\PP^4}$ described in \S\ref{sec:integral_affine_manifolds}. We refer to \cite[Appendices A,B]{AP19} for a survey of this example. 
\end{example}


Hence, restricting our attention to the mirror $\breve{X}$ to the quintic threefold, where $B=\partial \Delta_{\PP^4}$ as discussed in \S\ref{sec:integral_affine_manifolds}, and using \cite[Theorem~$4.3.1$]{B}, we deduce that the Hodge number 
\[
h^{1,1}(\breve{X})=h^{2,1}(X)
\] 
can be computed in terms of the integral points on $\Delta_{\PP^4}$ from the equation
\begin{equation}
\label{Eq: Batyrev Hodge}    
 h^{2,1}(X)=l(\Delta_{\PP^4})-n-1-\sum_{\codim(\theta)=1}l^\star(\theta),
\end{equation}
where $n=4$ since $X \subset \PP^4$, and the other terms are defined as follows: $l(\Delta_{\PP^4})$ is the number of integral points in $\Delta_{\PP^4}$, the term $l^\star(\theta)$ denotes the number of integral points in the interior of a face $\theta$, and the sum is over all $\codim 1$ faces $\theta$ of $\Delta_{\PP^4}$, see \cite[Definition~$3.3.6$]{B}. It is shown in \cite[\S~$4.1.1$]{AM126} that $l(\Delta_{\PP^4})=126$. Note that there are $l^\star(\theta)=4$ integral points in the interior of each of the $5$ tetrahedra. Hence, we obtain
\[
h^{2,1}(X)=126-4-1-4\cdot5=101,
\]
as expected. In \S\ref{subsec: mod two} we give a correspondence between integral points on $\partial \Delta_{\PP^4}$ and cycles generating cohomology groups of the real Lagrangian $\breve{\mathcal{L}}_{\RR} \subset \breve{X}$, described in \S\ref{sec:real_locus}.

\subsection{The mod $2$ cohomology of $\breve{\mathcal{L}_{\RR}}$}
\label{subsec: mod two}

In \cite{AP19}, we computed the mod $2$ cohomology groups of Calabi--Yau threefolds which admit topological torus fibrations over integral affine manifolds with simple singularities, as discussed in \S\ref{sec:SYZ Mirrors}. In particular, we obtained the following result, see \cite[Example~$4.10$]{AP19}.

\begin{proposition}
Let $\breve{X}$ be the mirror to the quintic threefold, and $\breve{L}_\RR$ be the real Lagrangian obtained as in \S\ref{sec:real_locus}. Then, $\breve{L}_\RR$ is the disjoint union of the $3$-sphere with a $7$-to-$1$ branched cover $\breve{\mathcal{L}}_\RR$ over it \cite[Lemma~$3.2$]{AP19}, and the ranks of mod $2$ cohomology groups of $\breve{L}_\RR$ are
\begin{eqnarray}
\nonumber
h^0(\breve{L}_\RR,\ZZ_2) & = & h^3(\breve{L}_\RR,\ZZ_2) = 2.\\
\nonumber
h^1(\breve{L}_\RR,\ZZ_2) & = & h^2(\breve{L}_\RR,\ZZ_2) = 101.
\end{eqnarray}
\end{proposition}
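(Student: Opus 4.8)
The plan is to reduce everything to a computation on the base $B\cong S^3$, exploiting the description of $\breve L_\RR$ as a branched cover built by gluing the local models of \S\ref{sec:real_locus}; this is the route of \cite[Example~4.10]{AP19}, whose argument I recall and adapt to the present notation. The decomposition $\breve L_\RR=\breve{\shL}_\RR\coprod S^3$ is Lemma~\ref{lem:connected_cmpts}: the $2$-torsion point $u_0=(0,0,0)$ is fixed by every monodromy operator $T_{ij,k}$, hence sweeps out a section of $\breve f$ over all of $B$, giving an $S^3$ component; the remaining $2$-torsion points $u_1,\dots,u_7$ form a single orbit of the group generated by the double transpositions \eqref{Eq:monodromy for the mirror quintic} (from $u_1$ one reaches $u_2,u_4,u_6$ via $T_{12,1},T_{14,4},T_{12,2}$, and then $u_3,u_5,u_7$ from $u_2$), so they glue to a connected $7$-to-$1$ cover $\breve{\shL}_\RR\to B$ branched over $\Delta$. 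In particular $h^0(\breve L_\RR,\ZZ_2)=2$, and since both components are closed orientable $3$-manifolds by Proposition~\ref{pro:orientable}, mod-$2$ Poincar\'e duality gives $h^3(\breve L_\RR,\ZZ_2)=2$ and $h^2(\breve L_\RR,\ZZ_2)=h^1(\breve L_\RR,\ZZ_2)$. It therefore suffices to prove $h^1(\breve L_\RR,\ZZ_2)=101$, equivalently $h^1(\breve{\shL}_\RR,\ZZ_2)=101$, since the $S^3$ summand contributes nothing in degree~$1$.

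For that number I would pull back the cover $\U$ of Construction~\ref{def:coarse_cover} along $\breve\pi$ and run the associated \u{C}ech (Mayer--Vietoris) spectral sequence with $\ZZ_2$-coefficients; since $\breve\pi$ is finite this coincides with the \u{C}ech-to-derived spectral sequence for the constructible sheaf $\breve\pi_\star\ZZ_2$ relative to $\U$, with $E_2^{p,q}=\check H^p\!\bigl(\U,\underline H^q(\breve\pi_\star\ZZ_2)\bigr)$ abutting to $H^{p+q}(\breve{\shL}_\RR,\ZZ_2)$, where $\underline H^q(\breve\pi_\star\ZZ_2)$ is the presheaf $\shU\mapsto H^q(\breve\pi^{-1}(\shU),\ZZ_2)$. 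The required local inputs are exactly Lemmas~\ref{lem:hexagonal_regions}--\ref{lem:topology_for_intersections}: $\breve\pi^{-1}(\U_{\sigma^3})$ is seven disjoint $3$-balls; $\breve\pi^{-1}(\U_{\sigma^2})$ is three discs together with a $2$-complex with $H^1\cong\ZZ^{12}$ and $H^2\cong\ZZ_2^6$; $\breve\pi^{-1}(\U_{\sigma^1})$ is a $3$-ball together with three wedges of four circles; and each overlap $\breve\pi^{-1}(\U_{\sigma^1}\cap\U_{\sigma^2})$ is three $3$-balls together with two wedges of four circles. Over $\ZZ_2$ there are no extension problems, so assembling these pieces while tracking the monodromy \eqref{Eq:monodromy for the mirror quintic} computes the $E_2$ page; one then checks that the sequence degenerates there (as in \S\ref{sec:mod2_cohomology}, using for instance that $H^3(\breve{\shL}_\RR,\ZZ_2)\cong\ZZ_2$ together with the direct determination of the Betti numbers in \cite{AP19}), and reading off total degree~$1$ gives $h^1(\breve{\shL}_\RR,\ZZ_2)=101$. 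Combined with the Poincar\'e-duality reduction this yields all four Betti numbers.

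The conceptual content is slight; I expect the real obstacle to be bookkeeping. One must follow the seven sheets of the branched cover over the five tetrahedra, ten $2$-faces, ten edges and five vertices of $\partial\Delta_{\PP^4}$, and in particular understand how the $\ZZ_2$-monodromy glues sheets over the two kinds of trivalent vertices of $\Delta$ (negative vertices, lying in the interiors of $2$-faces, and positive vertices, lying in the interiors of edges), as depicted in Figure~\ref{fig:monodromy_cube}; it is only after descending to $B$ and using the combinatorics of $\partial\Delta_{\PP^4}$ that this becomes a finite, checkable calculation, and confirming that the spectral-sequence differentials vanish is the one point requiring an argument beyond enumeration. As an independent check on the degree-$1$ output one may compute $\pi_1(\breve{\shL}_\RR)$ from the Heegaard splitting of \S\ref{sec:heegaard_splitting}, which recovers $H_1(\breve{\shL}_\RR,\ZZ_2)$.
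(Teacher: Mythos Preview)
The paper does not give a proof of this Proposition; it is quoted directly from \cite[Example~4.10]{AP19}. Your first paragraph --- the decomposition $\breve L_\RR=\breve{\shL}_\RR\coprod S^3$ via Lemma~\ref{lem:connected_cmpts}, orientability via Proposition~\ref{pro:orientable}, and the Poincar\'e-duality reduction to the single number $h^1(\breve{\shL}_\RR,\ZZ_2)$ --- is correct and is exactly how the paper packages the statement.

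The gap is in your second paragraph. You propose to extract $h^1(\breve{\shL}_\RR,\ZZ_2)=101$ from the \u{C}ech-to-derived spectral sequence of \S\ref{sec:mod2_cohomology}, but in this paper the logic runs the other way: in the proof of Theorem~\ref{thm:cech_to_derived_Z2} the middle-row entries $E_2^{0,1}\cong\ZZ_2^{100}$ and $E_2^{1,1}\cong\ZZ_2^{40}$ are \emph{deduced from} the value $h^1(\breve{\shL}_\RR,\ZZ_2)=101$ taken from \cite[Corollary~1.2]{AP19}, not computed from the local Lemmas~\ref{lem:hexagonal_regions}--\ref{lem:topology_for_intersections} alone (those only give the cochain groups $\ZZ_2^{300}\to\ZZ_2^{240}$, not the rank of $\delta$). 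Degeneration at $E_2$ is likewise justified there by the same citation. You invoke ``the direct determination of the Betti numbers in \cite{AP19}'' to check degeneration and then ``read off'' $h^1=101$; that is circular. Note also that Poincar\'e duality does not rescue you: with the $E_2$ page as in \eqref{eq:E2_page_mod2}, if $d_2\colon E_2^{0,1}\to E_2^{2,0}\cong\ZZ_2$ had rank $a\in\{0,1\}$ one would get $h^1=h^2=101-a$ either way, so $h^1=h^2$ gives no constraint on $a$.

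The actual computation in \cite{AP19} does not go through this spectral sequence at all; it uses the long exact sequence involving the connecting homomorphism $\beta$ of \eqref{eq:beta} and its identification with the squaring map $D\mapsto D^2$ on $H^2(X,\ZZ_2)$ (this is \cite[Theorem~1.1]{AP19}; cf.\ Proposition~\ref{pro:fano_fourfolds} here). To make your route self-contained you would instead have to compute the rank of the $240\times 300$ matrix for $\delta$ and the differential $d_2$ directly; neither you nor the paper does this.
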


In this section we further investigate the mod $2$ cohomology of $\breve{\shL}_\RR$, using a \u{C}ech-to-derived spectral sequence. In particular, we show how to identify a basis of graded pieces of a filtration in $H^1(\breve{\shL}_\RR,\ZZ_2) \cong \ZZ_2^{101}$ with sets of integral points in $\Delta_{\PP^4}$. This allows us to relate the mod $2$ cohomology groups of $\breve{\shL}_\RR$, to Hodge numbers of $\breve{X}$ using \cite{B}, as outlined in \S\ref{subsec:Batyrev Hodge}.

\begin{theorem}
	\label{thm:cech_to_derived_Z2}
		Let $\U$ be the open cover of $B$ defined in 
	Construction~\ref{def:coarse_cover}.
	The \u{C}ech-to-derived spectral sequence 
	\[
E_2^{p,q}= \breve{H}^p(\U,\shH^q(B,\breve{\pi}_\star\ZZ_2)) \implies H^{p+q}(B,\breve{\pi}_\star\ZZ_2) \,,
\]
	for the sheaf $\breve{\pi}_\star\ZZ_2$, has the $E_2$ page
	\begin{equation}
	\label{eq:E2_page_mod2}
	\xymatrix@R-2pc{
		\ZZ_2^{60} & & & \\
		\ZZ_2^{100} & \ZZ_2^{40} & & \\
		\ZZ_2 & \ZZ_2 & \ZZ_2 & \ZZ_2.
	}
	\end{equation}
	Moreover, this spectral sequence degenerates at the $E_2$ page.
\end{theorem}
\begin{proof}
	We need to compute seven \u{C}ech cohomology groups, the non-zero entries in each of the $E_2$ pages displayed in the statement of Theorem~\ref{thm:cech_to_derived_Z2}. To describe these cohomology groups we first define the pre-sheaf
	\[
	\shH^j_{\ZZ_2} \colon U \mapsto H^j(\breve{\pi}^{-1}(U), \ZZ_2)
	\]
	for open sets $U \subset B$, $j \in \ZZ_{\geq 0}$. The non-zero \u{C}ech cohomology groups we need to compute are:
	\[
	\xymatrix@R-2pc{
		\breve{H}^0(\U,\shH^2_{\ZZ_2}) & & & \\
		\breve{H}^0(\U,\shH^1_{\ZZ_2}) & \breve{H}^1(\U,\shH^1_{\ZZ_2}) & & \\
		\breve{H}^0(\U,\shH^0_{\ZZ_2}) & \breve{H}^1(\U,\shH^0_{\ZZ_2}) & \breve{H}^2(\U,\shH^0_{\ZZ_2}) & \breve{H}^3(\U,\shH^0_{\ZZ_2}).
	}
	\]
	
	  Recall from Construction~\ref{def:coarse_cover} that the open cover $\U$ consists of 
    \begin{enumerate}
        \item $10$ open sets $\U_{\sigma^1}$, indexed by the $1$-dimensional faces $\sigma^1$ of $B=\partial \Delta_{\PP^4}$.
        \item 10 open sets $\U_{\sigma^2}$, indexed by the $2$-dimensional faces $\sigma^2$ of $B=\partial \Delta_{\PP^4}$.
        \item 5 open sets $\U_{\sigma^3}$, indexed by the $3$-dimensional faces $\sigma^3$ of $B=\partial \Delta_{\PP^4}$.
    \end{enumerate}

	By Lemma~\ref{lem:topology_for_edges}, 
	we have, for every $\sigma^1$,
	\begin{equation}
	\label{eq:cohomology_edges_2}
	H^i(\breve{\pi}^{-1}(\U_{\sigma^1}),\ZZ_2) =
	\begin{cases}
	\ZZ_2^4 & i=0 \\
	 \ZZ_2^{12} & i = 1\\
	 0 & i = 2 \,.
	\end{cases}
	\end{equation}
	
	By Lemma~\ref{lem:preimage_face}, we have, for every 
	$\sigma^2$,
	\begin{equation}
	\label{eq:cohomology_faces_2}
	H^i(\breve{\pi}^{-1}(\U_{\sigma^2}),\ZZ_2) =
	\begin{cases}
	\ZZ_2^4 & i=0 \\
	 \ZZ_2^{18} & i = 1\\
	 \ZZ_2^6 & i = 2.
	\end{cases}
	\end{equation}
	
	By Lemma \ref{lem:3dfaces}, we have, for every $\sigma^3$,  
	\begin{equation}
	\label{eq:cohomology_3faces_2}
	H^i(\breve{\pi}^{-1}(\U_{\sigma^3}),\ZZ_2) =
	\begin{cases}
	\ZZ_2^7 & i=0 \\
	 0 & i \geq 1\,.
	\end{cases}
	\end{equation}

The only contribution to $\breve{H}^0(\U,\shH^2_{\ZZ_2})$
comes from the $10$ open sets $\U_{\sigma^2}$ via \eqref{eq:cohomology_faces_2} and so
	\[
	\breve{H}^0(\U,\shH^2_{\ZZ_2}) \cong \breve{C}^0(\U,\shH^2_{\ZZ_2}) \cong \ZZ_2^{10 \times 6}= \ZZ_2^{60} \,.
	\]
	We now compute the \u{C}ech differential 
	\[
	\delta \colon \breve{C}^0(\U,\shH^1_{\ZZ_2}) \to \breve{C}^1(\U,\shH^1_{\ZZ_2}).
	\]
	 The dimensions $\dim \breve{H}^i(\U,\shH^1_{\ZZ_2})$ are equal to $\dim \ker (\delta)$ and $\dim \coker (\delta)$ for $i \in \{0,1\}$ respectively.
	The group  $\breve{C}^0(\U,\shH^1_{\ZZ_2})$ is the sum of the cohomology groups $H^1(\U_{\sigma},\ZZ_2)$
	for $\sigma$ one of the $10$ $1$-dimensional faces $\sigma^1$ or one of the $10$ $2$-dimensional faces
	$\sigma^2$, and so by
	\eqref{eq:cohomology_edges_2}-\eqref{eq:cohomology_faces_2}, we have 
	\[ \breve{C}^0(\U,\shH^1_{\ZZ_2}) \cong \ZZ_2^{10 \times 12+10 \times 18} = \ZZ_2^{300} \,. \]
    The group $\breve{C}^1(\U,\shH^1_{\ZZ_2})$ is the sum of the cohomology groups
    \[
    H^1(\breve{\pi}^{-1}(\U_{\sigma^1_j} \cap \U_{\sigma^2_k}),\ZZ_2)
    \]
    where $\sigma^1_j$ is an edge of $\sigma^2_k$. By Lemma~\ref{lem:topology_for_intersections} these groups are all isomorphic to $\ZZ_2^8$. Since there are $30$ such intersections, we have that
    \[
    \breve{C}^1(\U,\shH^1_{\ZZ_2}) \cong \ZZ_2^{30\times 8} \cong \ZZ_2^{240}\,.
    \]

To obtain the last two rows of the spectral sequence, note that the corresponding \u{C}ech complex is computed using the analogous computation over the integers, as described in the proof of Theorem~\ref{thm:cech_to_derived}. Since $\breve{H}^3(\U,\shH^0_{\ZZ_2}) \cong \ZZ_2$ and $H^3(\breve{\shL}_\RR,\ZZ_2) \cong \ZZ_2$, the differentials
	\begin{align*}
	 \breve{H}^1(\U,\shH^1_{\ZZ_2})  &\to \breve{H}^3(\U,\shH^0_{\ZZ_2}), \textrm{ and} \\
	 \breve{H}^0(\U,\shH^2_{\ZZ_2})  &\to \breve{H}^3(\U,\shH^0_{\ZZ_2})
	\end{align*}
    on the $E_2$ and $E_3$ pages respectively vanish. Since, $h^1(\breve{\shL}_\RR,\ZZ_2) = h^2(\breve{\shL}_\RR,\ZZ_2) = 101$ by \cite[Corollary~$1.2$]{AP19}, we have that $\dim \coker(\delta) = 40$ and hence $\dim \ker(\delta) = 100$. We show in Theorem~\ref{Relating to integral points} that bases of these vector spaces can be identified with integral points in faces of $\Delta_{\PP^4}$.
    
	The degeneration of the spectral sequence at the $E_2$ page follows from \cite[Corollary~$1.2$]{AP19}. Indeed, by \cite[Example~$4.10$]{AP19} it follows that $h^1(\breve{\shL}_\RR,\ZZ_2) = 101$; this ensures that all morphisms on the $E_2$ and $E_3$ pages vanish.
\end{proof}

We now consider the relationship with the mod $2$ cohomology groups of real Lagrangians and Hodge numbers in more detail.

\begin{theorem}
	\label{Relating to integral points}
	Let $\breve{\pi} \colon \breve{\mathcal{L}}_\RR \to B$ be the fibration on the connected component of the real Lagrangian $\breve{L}_{\RR}$ in the mirror to the quintic as in Lemma~\ref{lem:connected_cmpts}. The terms of the $E_2$ page of the \u{C}ech-to-derived spectral sequence
	\[
	\breve{H}^i(\mathcal{U}, \shH^j_{\ZZ_2}) \Rightarrow H^{i+j}(B;\breve{\pi}_\star\ZZ_2) \cong H^{i+j}(\breve{\shL}_\RR,\ZZ_2)
	\]
	are $\ZZ_2$ vector spaces with dimensions $\dim E_2^{2-p,p}$, equal the total number of integral points in the relative interiors of $p$-dimensional faces of $B\cong\partial \Delta_\PP^4$ for each $p \in \{1,2\}$. Moreover, there is a canonical generating set of $E_2^{2-p,p}$ indexed by these integral points of $\partial \Delta_{\PP^4}$.
\end{theorem}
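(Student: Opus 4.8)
The plan is to treat the two relevant off-diagonal entries of the $E_2$-page, namely $E_2^{0,2}$ and $E_2^{1,1}$, separately: the first is essentially a repackaging of Lemma~\ref{lem:preimage_face}, while the second needs a closer analysis of the \u{C}ech differential. The dimensions of both entries are already pinned down in Theorem~\ref{thm:cech_to_derived_Z2}, so the real content is the production of canonical bases indexed by integral points.

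For $E_2^{0,2}$: by the computations of \S\ref{sec:topology_real_locus}, the only open sets $U \in \U$ with $H^2(\breve{\pi}^{-1}(U),\ZZ_2) \neq 0$ are the sets $\U_{\sigma^2}$ attached to the ten two-dimensional faces $\sigma^2$ of $\partial \Delta_{\PP^4}$, and these are pairwise disjoint by Construction~\ref{def:coarse_cover}; hence $E_2^{0,2} = \breve{H}^0(\U,\shH^2_{\ZZ_2}) \cong \bigoplus_{\sigma^2} H^2(\breve{\pi}^{-1}(\U_{\sigma^2}),\ZZ_2)$, and it suffices to give, for each $\sigma^2$, a canonical basis of $H^2(\breve{\pi}^{-1}(\U_{\sigma^2}),\ZZ_2) \cong \ZZ_2^{6}$ indexed by the six integral points in the relative interior of $\sigma^2$. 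For this I would refine the proof of Lemma~\ref{lem:preimage_face}: the hexagonal regions $H_1,\dots,H_6$ of Lemma~\ref{lem:hexagonal_regions} are precisely the cells dual to the six interior integral points of $\sigma^2$ in the first barycentric subdivision of the triangulation, hence are canonically labelled by those points, and the inductive argument there identifies $H_1(\breve{\pi}^{-1}(\U_{\sigma^2}),\ZZ) \cong \ZZ^{12} \oplus \ZZ_2^{6}$ with its $i$-th torsion summand generated by the class $v_1+v_4$ living on $H_i$. The universal coefficient theorem then produces the dual basis of $\Ext(H_1,\ZZ_2) \cong H^2(\breve{\pi}^{-1}(\U_{\sigma^2}),\ZZ_2)$; summing over the ten faces gives a basis of $E_2^{0,2}$ of cardinality $60$, canonically indexed by the integral points in the relative interiors of two-dimensional faces of $\partial \Delta_{\PP^4}$.

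For $E_2^{1,1} = \breve{H}^1(\U,\shH^1_{\ZZ_2}) = \coker\big(\delta\colon \breve{C}^0(\U,\shH^1_{\ZZ_2}) \to \breve{C}^1(\U,\shH^1_{\ZZ_2})\big)$, whose dimension is $40$ by Theorem~\ref{thm:cech_to_derived_Z2}, I would build the generating set from distinguished loops around edge points. For an edge $\sigma^1$, Lemma~\ref{lem:topology_for_edges} gives three components of $\breve{\pi}^{-1}(\U_{\sigma^1})$ retracting onto a wedge of four circles; since $\Delta \cap \sigma^1$ consists of five points, the four integral points in the relative interior of $\sigma^1$ lie one in each of the four middle segments of $\sigma^1 \setminus \Delta$, and the four circles of each loopy component are exactly the loops $\lambda_m$ encircling these four points $m$. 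Fixing once and for all a choice of loopy component and of orientations — using the vertex labelling of $\Delta_{\PP^4}$ and the conventions of the proof of Lemma~\ref{lem:hexagonal_regions} — the class of $\lambda_m$ restricts into $H^1(\breve{\pi}^{-1}(\U_{\sigma^1} \cap \U_{\sigma^2}),\ZZ_2)$ for each of the three faces $\sigma^2 \supset \sigma^1$ (these intersections being described by Lemma~\ref{lem:topology_for_intersections}); the alternating sum over those faces is a \u{C}ech cochain $c_{\sigma^1,m} \in \breve{C}^1(\U,\shH^1_{\ZZ_2})$. One checks directly that $c_{\sigma^1,m}$ is a cocycle, and the claim is that the forty classes $[c_{\sigma^1,m}]$, one for each integral point $m$ in the relative interior of an edge, span $E_2^{1,1}$; combined with the dimension count this makes them a basis.

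The main obstacle is this last spanning statement. I expect it to reduce to an explicit, if lengthy, computation over $\ZZ_2$: one writes every group appearing in $\breve{C}^0(\U,\shH^1_{\ZZ_2})$ and $\breve{C}^1(\U,\shH^1_{\ZZ_2})$ in the distinguished bases coming from \S\ref{sec:topology_real_locus} — the edge loops $\lambda_m$, together with the hexagon-boundary loops and the three section circles on each two-dimensional face — computes the restriction maps from the monodromy permutations recorded in \S\ref{sec:real_locus}, and reads off $\im(\delta)$; the content is then that $\operatorname{span}\{c_{\sigma^1,m}\}$ is a complement of $\im(\delta)$ in $\breve{C}^1(\U,\shH^1_{\ZZ_2})$. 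The orientation and labelling conventions are exactly what make this bookkeeping — which loopy component a given loop lives in, and with which sign — consistent across the whole cover, so that the $c_{\sigma^1,m}$ are genuinely canonical. Finally, since Theorem~\ref{thm:cech_to_derived_Z2} asserts degeneration at $E_2$, we have $E_\infty^{2-p,p} = E_2^{2-p,p}$, so the bases constructed above are bases of the associated graded pieces of the induced filtration on $H^{\ast}(\breve{\shL}_\RR,\ZZ_2)$, which gives the stated correspondence with integral points.
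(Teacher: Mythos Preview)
Your treatment of $E_2^{0,2}$ matches the paper's argument: both identify the six hexagonal regions $H_i$ with the six interior integral points of $\sigma^2$, observe that each contributes a $\ZZ_2$ torsion class to $H_1(\breve{\pi}^{-1}(\U_{\sigma^2}),\ZZ)$, and pass to $H^2(-,\ZZ_2)$ via the universal coefficient theorem.

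Your approach to $E_2^{1,1}$, however, has a fatal gap. Your cochain $c_{\sigma^1,m}$ is, by construction, the restriction to the three intersections $\U_{\sigma^1}\cap\U_{\sigma^2}$ of a single class $\lambda_m \in H^1(\breve{\pi}^{-1}(\U_{\sigma^1}),\ZZ_2)$. But this is precisely $\delta$ applied to the $0$-cochain which equals $\lambda_m$ on $\U_{\sigma^1}$ and vanishes on all other opens (these three intersections are the only ones contributing to $\breve{C}^1(\U,\shH^1_{\ZZ_2})$ which involve $\U_{\sigma^1}$). Hence every $[c_{\sigma^1,m}]$ is zero in $\coker(\delta)=E_2^{1,1}$, and your proposed generating set collapses entirely. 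The statement ``one checks directly that $c_{\sigma^1,m}$ is a cocycle'' is vacuous here, since $\breve{C}^1$ is already the top of this \u{C}ech complex; the issue is that it is also a coboundary.

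The paper avoids this by dualising: it identifies $(\coker\delta)^\star$ with $\ker(\delta^\star)$ and exhibits forty linearly independent homology classes $\chi_p$ in that kernel. The combinatorial point you are missing is that one must use \emph{both} loopy components of $\breve{\pi}^{-1}(\U_{\sigma^1}\cap\U_{\sigma^2})$ at once. For each interior edge point $p\in\sigma^1$ and each of the three faces $\sigma^2\supset\sigma^1$, let $\chi^1_{p,\sigma^2},\chi^2_{p,\sigma^2}$ be the circle classes over the segment containing $p$ in the two loopy components, and set $\chi_p := \sum_{\sigma^2\ni p}(\chi^1_{p,\sigma^2}+\chi^2_{p,\sigma^2})$. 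Under the map $\iota_1$ into $H_1(\breve{\pi}^{-1}(\U_{\sigma^1}))$, the six summands hit each of the three edge-circle classes $\chi^j_p$ exactly twice (the three faces pick out the three pairs from $\{\chi^1_p,\chi^2_p,\chi^3_p\}$), so the sum vanishes mod~$2$; under $\iota_2$ into $H_1(\breve{\pi}^{-1}(\U_{\sigma^2}))$ the two classes $\chi^1_{p,\sigma^2}$ and $\chi^2_{p,\sigma^2}$ have the same image, so again their sum vanishes mod~$2$. Linear independence is immediate since the $\chi^i_{p,\sigma^2}$ already form a basis of $\breve{C}^1(\U,\shH^1_{\ZZ_2})^\star$. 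This mod-$2$ double-counting is the actual mechanism, and it is invisible if you restrict to a single loopy component.
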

\begin{proof}
	It is easy to check that there are $6$ points in the relative interior of each of the $10$ two-dimensional faces of $\Delta_{\PP^4}$, and $4$ points in the relative interior of each of the $10$ edges of $\Delta_{\PP^4}$. We then verify that $E^{0,2}_2  \cong \ZZ_2^{60}$, and $E^{1,1}_2  \cong \ZZ_2^{40}$.
	
	Indeed, in the proof of Lemma~\ref{lem:preimage_face} we observed that, restricting to a two-dimensional face $\sigma^2$, the torsion group of $H_1(\breve{\pi}^{-1}(\U_{\sigma^2}),\ZZ)$ is isomorphic to $\ZZ^6_2$. By the universal coefficient theorem, this is isomorphic to the summand $H^2(\breve{\pi}^{-1}(\U_{\sigma^2}),\ZZ_2)$ of 
	\[
    \breve{C}^0(\U,\shH^2_{\ZZ_2}) \cong \breve{H}^0(\U,\shH^2_{\ZZ_2}).
	\]
	This torsion group is generated by elements associated with the hexagonal components $H_i$ for $i \in \{1,\ldots, 6\}$, see Figure~\ref{fig:generating_set}. Note that these hexagonal regions are in canonical bijection with the set of integral points which lie in the relative interior of $\sigma^2$. Moreover, by the universal coefficient theorem, this $\ZZ_2^6$ torsion group is isomorphic to $H^2(\breve{\pi}^{-1}(\U_{\sigma^2}),\ZZ_2)$.
	
	The vector space $E_2^{1,1}$ is the $40$ dimensional cokernel of the map
	\[
	\delta \colon \breve{C}^0(\U,\shH^1_{\ZZ_2}) \to \breve{C}^1(\U,\shH^1_{\ZZ_2}).
	\]
	We recall from the proof of Theorem~\ref{thm:cech_to_derived_Z2} that the vector spaces $\breve{C}^i(\U,\shH^1_{\ZZ_2})$ have dimensions $300$ and $240$ for $i=0$ and $i=1$ respectively. The dual space to $\breve{C}^1(\U,\shH^1_{\ZZ_2})$ is generated by the first homology groups of $\breve{\pi}^{-1}(\U_{\sigma^1} \cap \U_{\sigma^2})$, where $\sigma^i$ is a $i$-dimensional face of $\Delta_{\PP^4}$ for each $i \in \{1,2\}$. We recall that each space $\breve{\pi}^{-1}(\U_{\sigma^1} \cap \U_{\sigma^2})$ retracts onto the disjoint union of $3$ points and $2$ copies of the wedge union of $4$ circles.
	
	Each integral point $p$ in the relative interior of $\sigma^1$ corresponds to a segment between a pair of adjacent (positive) vertices of $\Delta$. In particular, each segment between adjacent (positive) vertices in $\sigma^1$ determines a pair of homology classes in $H_1(\breve{\pi}^{-1}(\U_{\sigma^1} \cap \U_{\sigma^2}),\ZZ_2)$. We let $\chi^1_{p,\sigma^2}$ and $\chi^2_{p,\sigma^2}$ denote this pair of homology classes.
	
	We claim that a basis of the kernel of $\delta^\star$, the linear dual of $\delta$, is given by
	\[
	\chi_p := \sum_{\{\sigma^2 : ~ p \in \sigma^2\}}(\chi^1_{p,\sigma^2} + \chi^2_{p,\sigma^2}),
	\]
    as $p$ varies over the $40$ integral points in the relative interior of the edges of $\partial \Delta_{\PP^4}$. Note that the sum defining $\chi_p$ contains exactly three terms for all $p$.
    
    We first note that, since the classes $\chi^1_{p,\sigma^2}$ and $\chi^2_{p,\sigma^2}$ form a basis of $\breve{C}^1(\U,\shH^1_{\ZZ_2})^\star$ the set of classes $\chi_p$ is linearly independent. To verify that $\chi_p \in \ker \delta^\star$ we first check that $\chi_p$ is in the kernel of the natural map
    \[
    \iota_1 \colon \bigoplus_{\{\sigma^2 : ~ \sigma^1 \subset \sigma^2\}}H_1(\breve{\pi}^{-1}(\U_{\sigma^1} \cap \U_{\sigma^2}),\ZZ_2) \to H_1(\breve{\pi}^{-1}(\U_{\sigma^1}),\ZZ_2)
    \]
    We recall from Lemma~\ref{lem:topology_for_intersections} that $H_1(\breve{\pi}^{-1}(\U_{\sigma^1}),\ZZ_2)$ is generated by the classes of $12$ circles, three of which lie over the segment in $\sigma^1$ containing $p$. Let $\chi^1_p$, $\chi^2_p$, and $\chi^3_p$ denote these homology classes. We note that, for any $i \in \{1,2\}$, $\iota_1(\chi^i_{p,\sigma^2})$ is equal to a class $\chi^j_p$ for some $j \in \{1,2,3\}$.
    Moreover, we have that
    \[
    \iota_1(\chi_p) = \sum_{\{\sigma^2: ~ p \in \sigma^2\}}(\iota_1(\chi^1_{p,\sigma^2}) + \iota_1(\chi^2_{p,\sigma^2})).
    \]
    This is a sum of six cycles in which each of the three classes in $\{\chi^i_p : i \in \{1,2,3\}\}$ appears twice. Hence this sum vanishes modulo $2$.
    
    Finally, we verify that $(\chi^1_{p,\sigma^2} + \chi^2_{p,\sigma^2})$ is in the kernel of the natural map
    \[
    \iota_2 \colon H_1(\breve{\pi}^{-1}(\U_{\sigma^1} \cap \U_{\sigma^2}),\ZZ_2) \to H_1(\breve{\pi}^{-1}(\U_{\sigma^2}),\ZZ_2)
    \]
    for any $p$ and $\sigma^2$ such that $p \in \sigma^2$. However this follows immediately from the fact that the images of the classes of $\chi^1_{p,\sigma^2}$ and $\chi^2_{p,\sigma^2}$ in $H_1(\breve{\pi}^{-1}(\U_{\sigma^2}),\ZZ_2)$ are equal and hence their sum vanishes modulo $2$. These computations verify that $\chi_p$ is in the kernel of $\delta^\star$, and hence the set of points in the relative interior of edges in $\Delta_{\PP^4}$ can be canonically identified with a basis of. 
    \[
    \breve{H}^1(\U,\shH^1_{\ZZ_2}) \cong \ZZ_2^{40}
    \]
\end{proof}

We expect this relationship between mod $2$ cohomology groups of real Lagrangians and Hodge numbers to hold in greater generality, as stated in the following conjecture.

\begin{conjecture}
\label{conj:generalised_example}
Let $X$ be a Calabi--Yau hypersurface in a smooth toric Fano fourfold, and let $\breve{f}\colon \breve{X} \to B$ be a topological torus fibration on the mirror $\breve{X}$ to $X$, as discussed in \S\ref{sec:SYZ Mirrors}. Then, there is a fiber preserving anti-symplectic involution on $\breve{X}$, whose fixed point locus has two connected components, one homeomorphic to $B$ and the other arising as a multi-section $\breve{\pi}:\mathcal{L}_{\RR} \to B$, such that 
there is a \u{C}ech-to-derived spectral sequence for $\breve{\pi}_*\ZZ$, analogous to that appearing in Theorem~\ref{thm:cech_to_derived_Z2}, whose $E_2$ page has the following form
\[
	\xymatrix@R-2pc{
		\ZZ_2^{l_f} & & & \\
		\ZZ_2^{l_e+l_f} & \ZZ_2^{l_e} & & \\
		\ZZ_2 & \ZZ^{a}_2 & \ZZ^{a}_2 & \ZZ_2
	}
\]
where $l_f$ and $l_e$ denote the number of integral points in the relative interiors of two-dimensional faces and edges respectively. Analogously with Theorem~\ref{thm:cech_to_derived_Z2}, there are canonical bijections between integral points and generating sets of $E^{2-p,p}_2$ for $p \in \{1,2\}$. Moreover, this spectral sequence degenerates at the $E_2$ page.
\end{conjecture}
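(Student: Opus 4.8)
We outline a strategy for Conjecture~\ref{conj:generalised_example} that reproduces, in the general toric setting, the argument used for the mirror quintic in Theorems~\ref{thm:cech_to_derived_Z2} and~\ref{Relating to integral points}. Let $P$ be the reflexive polytope of the smooth toric Fano fourfold containing $X$, so that $B=\partial P$ carries the affine structure with simple singularities underlying the topological torus fibration $\breve{f}\colon\breve{X}\to B$ of \cite{GrossBB,HZci}. As in Lemma~\ref{lem:connected_cmpts} the point $u_0$ is fixed by every monodromy matrix, so $\breve{L}_\RR=\breve{\shL}_\RR\sqcup S^3$ with $\breve{\pi}\colon\breve{\shL}_\RR\to B$ a $7$-to-$1$ cover branched along $\Delta$, and $\breve{\shL}_\RR$ is a closed orientable $3$-manifold by Proposition~\ref{pro:orientable} and Remark~\ref{rem:assumption}. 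One then forms the coarse open cover $\U$ exactly as in Construction~\ref{def:coarse_cover}, now indexed by the faces of $P$ of positive dimension, and runs the \u{C}ech-to-derived spectral sequence for $\breve{\pi}_\star\ZZ_2$ relative to $\U$.

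The key point is that, by Theorem~\ref{Thm:Marks}, the affine structure near any point of $\Delta$ is one of three \emph{universal} local models --- the interior of an edge of $\Delta$, a positive vertex, or a negative vertex --- so the local computations of Lemmas~\ref{lem:hexagonal_regions}--\ref{lem:topology_for_intersections} should port over. Over a facet of $P$ the cover restricts to seven contractible sheets. Over a $2$-face $\sigma^2$ one decomposes $\breve{\pi}^{-1}(\U_{\sigma^2})$ into disc sections together with one region per interior lattice point of $\sigma^2$ playing the role of the hexagons $H_i$, and a Mayer--Vietoris induction as in Lemma~\ref{lem:preimage_face} gives $H^2(\breve{\pi}^{-1}(\U_{\sigma^2}),\ZZ_2)\cong\ZZ_2^{l^\star(\sigma^2)}$ with a canonical generator per such point; over an edge $\sigma^1$ one obtains sections plus one extra circle per interior lattice point of $\sigma^1$, as in Lemma~\ref{lem:topology_for_edges}. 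Summing over faces yields $E_2^{0,2}\cong\ZZ_2^{l_f}$, and the $\chi_p$ argument of Theorem~\ref{Relating to integral points}, applied to the cokernel of the \u{C}ech differential in bidegree $(1,1)$, yields $E_2^{1,1}\cong\ZZ_2^{l_e}$, each with the advertised canonical basis of lattice points; the term $E_2^{0,1}\cong\ZZ_2^{l_e+l_f}$ then follows from these two and an Euler-characteristic identity for the \u{C}ech complex of $\shH^1_{\ZZ_2}$. The bottom row $E_2^{\bullet,0}=\breve{H}^\bullet(\U,\shH^0_{\ZZ_2})$ is the cohomology of an explicit combinatorial complex built from the nerve of $\U$ weighted by $\pi_0$ of the fibres of $\breve{\pi}$; since $\underline{\ZZ_2}\subset\breve{\pi}_\star\ZZ_2$ it contains $H^\bullet(B,\ZZ_2)=(\ZZ_2,0,0,\ZZ_2)$ as a summand, and one expects $\breve{H}^0\cong\breve{H}^3\cong\ZZ_2$ (connectedness) and $\breve{H}^1\cong\breve{H}^2\cong\ZZ_2^a$.

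Degeneration is then handled as in the proof of Theorem~\ref{thm:cech_to_derived_Z2}: by \cite{AP19} the mod $2$ Betti numbers of $\breve{\shL}_\RR$ are $h^0=h^3=1$, $h^1=h^2=h^{2,1}(X)$, while Batyrev's formula \eqref{Eq: Batyrev Hodge} (retaining the codimension-two correction $\sum_\Theta l^\star(\Theta)\,l^\star(\Theta^\star)$) expresses $h^{2,1}(X)$ through lattice points of $P$. Matching these against the $E_2$ page forces $a=h^{2,1}(X)-l_e-l_f$, after which the total $\ZZ_2$-dimension of the $E_2$ page equals $2(l_e+l_f+a+1)=\sum_n\dim H^n(\breve{\shL}_\RR,\ZZ_2)$; since ranks can only drop from $E_2$ to $E_\infty$, equality forces every higher differential to vanish, giving degeneration at the $E_2$ page.

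The main obstacle is exactly the combinatorial input that the quintic case supplied for free. There every $2$-face of $P$ is a triangle with a single standard triangulation, whereas in general it is an arbitrary lattice polygon admitting many unimodular triangulations; one must show that $H^\bullet(\breve{\pi}^{-1}(\U_{\sigma^2}),\ZZ_2)$ depends only on $l^\star(\sigma^2)$ and not on the chosen triangulation, and that the regions near the polygon's vertices --- which have no counterpart in the triangular case --- contribute only sections and no extra homology. Likewise, establishing that the fibrations of \cite{GrossBB,HZci} really exhibit the three universal monodromy models of Theorem~\ref{Thm:Marks} around all of $\Delta$, running the $\chi_p$-type computation of $E_2^{1,1}$ uniformly rather than by the case-by-case checks of \cite{AP19}, and identifying $a$ with the combinatorial correction term appearing in Batyrev's formula, all seem to require genuinely new arguments; it is this absence of a uniform treatment that keeps the statement at the level of a conjecture.
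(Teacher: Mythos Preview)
The statement is a \emph{conjecture}: the paper offers no proof, only the heuristic that the quintic computation should generalise, followed by consequences (equation~\eqref{Eq: beta and points} and Proposition~\ref{pro:fano_fourfolds}) drawn \emph{assuming} the conjecture. Your proposal is therefore not to be compared against a proof in the paper, but assessed on its own terms as a strategy. As such it is a faithful extrapolation of the quintic argument, and your final paragraph correctly isolates the genuine obstacles (arbitrary lattice polygons as $2$-faces, dependence on the triangulation, uniform treatment of the $\chi_p$ classes).

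There is, however, a concrete error in your degeneration step. You assert that ``by \cite{AP19} the mod $2$ Betti numbers of $\breve{\shL}_\RR$ are $h^1=h^2=h^{2,1}(X)$''. This is false in general: Proposition~\ref{pro:fano_fourfolds} in the present paper gives
\[
h^1(\breve{L}_\RR,\ZZ_2) \;=\; h^{1,1}(\breve{X}) + \dim\ker\Square \;=\; h^{2,1}(X) + \dim\ker\Square,
\]
and Table~\ref{tab:fano_fourfolds} shows the defect $d=\dim\ker\Square$ is nonzero for the majority of the $124$ smooth toric Fano fourfolds. Consequently your formula $a=h^{2,1}(X)-l_e-l_f$ is wrong (the paper's own expectation, equation~\eqref{Eq: beta and points}, is $a=\dim\ker\beta+l_v-4$), and the dimension count you use to force degeneration collapses. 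Even setting aside the wrong value of $h^1$, the argument is circular as written: you determine $a$ by matching against the abutment, which presupposes degeneration, and then use that value of $a$ to conclude degeneration. In the quintic proof of Theorem~\ref{thm:cech_to_derived_Z2} this circularity is broken because the bottom row is computed \emph{directly} from the \u{C}ech complex of $\shH^0_{\ZZ_2}$ (giving $a=1$ independently), after which the known $h^1=101$ pins down $\dim\ker\delta$ and $\dim\coker\delta$; any general argument must likewise compute $a$ from the combinatorics of $\pi_0$ of the fibres before invoking the Betti numbers.
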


If Conjecture~\ref{conj:generalised_example} holds, the identification of integral points of faces of $B$ with generating sets of entries in the $E_2$ page of the \u{C}ech-to-derived spectral sequence for $\breve{\pi}_\star\ZZ_2$ guarantees, by results of Batyrev--Borisov~\cite{BB,BBci}, a relationship between the dimension of $a := \dim E^{2,0}_2$ and the rank of the connecting homomorphism
\begin{equation}
\label{eq:beta}
\beta \colon H^1(B,R^2 f_\star\ZZ_2) \to H^2(B,R^1 f_\star\ZZ_2)
\end{equation}
defined in \cite{CBM} and described in considerable detail in \cite{AP19}. Applying \cite[Theorem~$1.1$]{AP19}, if the corresponding polytope has $l_v$ vertices, we have that
\begin{equation}
    \label{Eq: beta and points}
a = \dim E_2^{2,0} = \dim \ker (\beta) + l_v - 4.
\end{equation}

We note that the rank of $\beta$, and hence $h^1(\breve{\shL}_\RR,\ZZ_2)$, can be computed easily for hypersurfaces in smooth toric Fano fourfolds using \cite[Theorem~$1.1$]{AP19} and toric geometry. In particular \cite[Theorem~$1.1$]{AP19} allows us to compute the value of $h^1(\breve{\shL}_\RR,\ZZ_2)$ from the square map 
\begin{eqnarray}
\nonumber
    \Square :  H^1(B,R^1 \breve{f}_\star\ZZ_2) & \longrightarrow & H^2(B,R^2 \breve{f}_\star\ZZ_2) \\
\nonumber
    D & \longmapsto & D^2
\end{eqnarray}
in the cohomology ring of $X$. We collect the computation of $h^1(\breve{\shL}_\RR,\ZZ_2)$ for anti-canonical hypersurfaces in each of the $124$ smooth toric Fano fourfolds, as classified in \cite{B99}, in the following result.

\begin{proposition}
    \label{pro:fano_fourfolds}
Let $X$ be an anti-canonical hypersurface in a smooth toric Fano fourfold. 
Then, 
\begin{equation}
     \label{Eq: anticanonical hypersurfaces}
    h^1(\breve{\shL}_\RR,\ZZ_2) - h^{1,1}(\breve{X}) = \dim \ker \Square,
\end{equation}
where $\Square$ is the map $H^2(X,\ZZ_2) \to H^4(X,\ZZ_2)$ given by $\Square \colon D \mapsto D^2$. The values of $h^1(\breve{\shL}_\RR,\ZZ_2)$ for each of the $124$ anti-canonical Calabi--Yau hypersurfaces are as in 
in Table~\ref{tab:fano_fourfolds}.
\end{proposition}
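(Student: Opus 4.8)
The plan is to derive \eqref{Eq: anticanonical hypersurfaces} by comparing \cite[Theorem~1.1]{AP19} with the Batyrev--Borisov formula, and then to fill in Table~\ref{tab:fano_fourfolds} by a finite computation ranging over the classification of smooth toric Fano fourfolds in \cite{B99}.

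\textbf{Step 1: the identity.} Let $X$ be an anti-canonical hypersurface in a smooth toric Fano fourfold $Y$, and let $P$ be the reflexive polytope with $B=\partial P$ (the dual of the fan polytope of $Y$). As recalled in \S\ref{subsec:Batyrev Hodge}, following \cite{GrossBB}, the mirror $\breve{X}$ is the general fibre of a toric degeneration whose intersection complex is $B$, which has simple singularities; thus $\breve{L}_\RR\subset\breve{X}$ and the connecting homomorphism $\beta$ of \eqref{eq:beta} are defined, and \cite[Theorem~1.1]{AP19} applies. I would first record two inputs. First, \cite[Theorem~1.1]{AP19}, together with \cite[Corollary~1.2]{AP19}, expresses $h^1(\breve{L}_\RR,\ZZ_2)$ --- via the rank of $\beta$ --- as the sum of the defect $\dim\ker\Square$ of the cup-square map $\Square\colon H^2(X,\ZZ_2)\to H^4(X,\ZZ_2)$, $D\mapsto D^2$, and a term determined by the lattice-point combinatorics of $P$; in the quintic case \eqref{Eq: beta and points} exhibits this combinatorial term as $l_v+l_e+l_f-4$, where $l_v$, $l_e$, $l_f$ count respectively the vertices of $P$ and the lattice points in the relative interiors of the edges and of the two-dimensional faces of $P$. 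Second, Batyrev--Borisov \cite{BB,BBci} express $h^{1,1}(\breve{X})=h^{2,1}(X)$ through \eqref{Eq: Batyrev Hodge}; here the codimension-two correction term of the general Batyrev--Borisov formula vanishes, since the fan polytope of the \emph{smooth} fourfold $Y$ has no lattice point in the relative interior of any of its edges, and one obtains $h^{1,1}(\breve{X})=l_v+l_e+l_f-4$ exactly as in the quintic case. Comparing the two, the combinatorial terms cancel and $h^1(\breve{L}_\RR,\ZZ_2)-h^{1,1}(\breve{X})=\dim\ker\Square$, which is \eqref{Eq: anticanonical hypersurfaces}.

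\textbf{Step 2: the table.} Granting the identity $h^1(\breve{L}_\RR,\ZZ_2)=h^{1,1}(\breve{X})+\dim\ker\Square$, it remains to evaluate both summands for each of the $124$ smooth toric Fano fourfolds $Y$ in \cite{B99}. The Hodge number $h^{1,1}(\breve{X})=h^{2,1}(X)$ is read off from $P$ via \eqref{Eq: Batyrev Hodge}, with no correction term by smoothness of $Y$. For $\dim\ker\Square$: by the Lefschetz hyperplane theorem $H^2(X,\ZZ_2)\cong H^2(Y,\ZZ_2)$ is spanned by the classes of the toric boundary divisors of $Y$ modulo linear relations, with cup products governed by the Stanley--Reisner ideal; since cup product is graded-commutative, $D\mapsto D^2$ is $\FF_2$-linear in characteristic $2$, and, using the perfect Poincar\'e pairing of $X$ together with $[X]=-K_Y$, the kernel of $\Square$ consists of the classes $D$ with $D\cdot D\cdot D'\cdot(-K_Y)\equiv 0\pmod 2$ for every toric boundary divisor $D'$ of $Y$. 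This is an $\FF_2$-linear system whose coefficients are quadruple intersection numbers on the smooth fan of $Y$; solving it and adding $h^{1,1}(\breve{X})$ gives $h^1(\breve{\shL}_\RR,\ZZ_2)$ for each $Y$, which we tabulate in Table~\ref{tab:fano_fourfolds}.

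\textbf{The main obstacle.} The substantive point is Step~1: matching the combinatorial output of \cite[Theorem~1.1]{AP19} against the Batyrev--Borisov lattice-point count term by term, and in particular confirming that the codimension-two correction genuinely drops out for smooth $Y$, so that the comparison leaves precisely $\dim\ker\Square$. One must also check that the hypotheses of \cite[Theorem~1.1]{AP19} hold across the entire \cite{B99} list --- that $B=\partial P$ carries an integral affine structure with simple singularities, and that $\breve{X}$ arises as a toric degeneration with intersection complex $B$ --- which follows from \cite{HZci,GrossBB} since $Y$ is smooth and Fano. The remainder of Step~2 is mechanical but extensive, and is best carried out by computer.
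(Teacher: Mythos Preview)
Your proposal is essentially correct and aligns with the paper's argument, though you do more work in Step~1 than the paper does. The paper's proof of \eqref{Eq: anticanonical hypersurfaces} is a single sentence: it is declared an \emph{immediate corollary} of \cite[Theorem~1.1]{AP19}, with no intermediate passage through the Batyrev--Borisov lattice-point formula. You instead reconstruct a route to the identity by expressing both $h^1(\breve{L}_\RR,\ZZ_2)$ and $h^{1,1}(\breve{X})$ as $l_v+l_e+l_f-4$ plus a defect and then cancelling; this is a reasonable unpacking, but it is not how the paper presents it, and it rests on your paraphrase of what \cite[Theorem~1.1]{AP19} outputs rather than on its actual statement. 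Your Step~2 matches the paper almost exactly: the paper also invokes the Lefschetz hyperplane theorem to identify $H^2(X,\ZZ_2)\cong H^2(Y,\ZZ_2)$, reduces $\rank\Square$ to the rank of the matrix $S_{i,j}=e_i^2\smile e_j\smile K_Y$ on $Y$, and computes this by machine (SageMath, with source in the supplementary material) across the $124$ polytopes.
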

\begin{proof}
    Fix a smooth Fano fourfold $Y$ and an anti-canonical hypersurface $X$ in $Y$. 
    According to \cite{BK} (see Corollary 1.9 and 3.9), the cohomology of $\breve{X}$ with $\ZZ$-coefficients is torsion-free, and so
     it follows from the analysis in \cite[pg 245]{CBM} that $h^1(B, R^1 f_{\star}\ZZ_2)=h^{1,1}(\breve{X})$. The Equation \ref{Eq: anticanonical hypersurfaces} is an immediate corollary of \cite[Theorem~$1.1$]{AP19}. Fix a basis $e_1,\ldots,e_k$ of $H^2(Y,\ZZ_2)$ and note that $H^2(Y,\ZZ_2) \to H^2(X,\ZZ_2)$ is an isomorphism by the Lefschetz hyperplane theorem. Hence the rank $\Square$ is equal to the rank of the $k \times k$ matrix with entries $S_{i,j}$ where
    \[
    S_{i,j} = e_i^2\smile e_j \smile K_Y,
    \]
    where $K_Y$ denotes the canonical class of $Y$ in $H^2(Y,\ZZ_2)$. This matrix can be easily computed using SageMath~\cite{SageMath}. Source code for this computation is included in the supplementary material~\cite{Supplementary}. The column $id$ records the index of each smooth four-dimensional toric Fano variety in the graded rings database \cite{grdb}.

    \begin{table}
        \centering
    \[{\scriptsize
        \arraycolsep=1pt
        \begin{array}{ccccc || ccccc || ccccc}
     id & h^{1,1}(X) & h^0(-K_Y) & \rank \beta & ~d~ & ~id~ & h^{1,1}(X) & h^0(-K_Y) & \rank \beta & ~d~ & ~id~ &  h^{1,1}(X) & h^0(-K_Y) & \rank \beta & d~ \\  \hline
     1 &3& 123& 2& 1& 42 & 4& 70& 4& 0& 83 & 4& 81& 2& 2 \\
     2 &2& 159& 2& 0& 43 & 3& 96& 2& 1& 84 & 5& 65& 3& 2 \\
     3 &4& 114& 2& 2& 44 & 4& 75& 4& 0& 85 & 4& 75& 2& 2 \\
     4 &5& 78& 2& 3& 45 & 3& 90& 2& 1& 86 & 3& 85& 1& 2 \\
     5 &4& 99& 2& 2& 46 & 3& 85& 3& 0& 87 & 4& 77& 3& 1 \\
     6 &4& 96& 3& 1& 47 & 2& 105& 2& 0& 88 & 4& 82& 3& 1 \\
     7 &3& 120& 2& 1& 48 & 5& 92& 0& 5& 89 & 4& 84& 0& 4 \\
     8 &3& 117& 2& 1& 49 & 6& 67& 0& 6& 90 & 5& 69& 0& 5 \\
     9 &4& 81& 2& 2& 50 & 5& 83& 0& 5& 91 & 4& 81& 2& 2 \\
     10 &4& 102& 0& 4& 51 & 4& 99& 0& 4& 92 & 4& 81& 4& 0 \\
     11 &4& 87& 2& 2& 52 & 4& 96& 0& 4& 93 & 4& 74& 3& 1 \\
     12 &3& 114& 0& 3& 53 & 5& 71& 0& 5& 94 & 3& 95& 2& 1 \\
     13 &4& 78& 4& 0& 54 & 5& 85& 3& 2& 95 & 4& 93& 4& 0 \\
     14 &4& 84& 0& 4& 55 & 6& 65& 4& 2& 96 & 5& 72& 4& 1 \\
     15 &4& 90& 3& 1& 56 & 5& 85& 4& 1& 97 & 5& 72& 0& 5 \\
     16 &4& 72& 3& 1& 57 & 6& 65& 4& 2& 98 & 6& 63& 0& 6 \\
     17 &3& 93& 3& 0& 58 & 5& 79& 2& 3& 99 & 5& 70& 2& 3 \\
     18 &3& 108& 3& 0& 59 & 5& 73& 3& 2& 100 & 4& 87& 2& 2 \\
     19 &3& 102& 0& 3& 60 & 4& 93& 2& 2& 101 & 4& 80& 2& 2 \\
     20 &3& 84& 2& 1& 61 & 5& 78& 0& 5& 102 & 4& 78& 2& 2 \\
     21 &2& 120& 2& 0& 62 & 6& 63& 0& 6& 103 & 4& 78& 0& 4 \\
     22 &4& 104& 2& 2& 63 & 6& 61& 4& 2& 104 & 3& 90& 2& 1 \\
     23 &5& 76& 4& 1& 64 & 6& 59& 0& 6& 105 & 3& 101& 2& 1 \\
     24 &4& 92& 4& 0& 65 & 5& 71& 3& 2& 106 & 3& 102& 2& 1 \\
     25 &4& 86& 4& 0& 66 & 5& 71& 5& 0& 107 & 4& 81& 0& 4 \\
     26 &3& 114& 2& 1& 67 & 5& 75& 0& 5& 108 & 4& 75& 0& 4 \\
     27 &4& 87& 3& 1& 68 & 5& 77& 3& 2& 109 & 3& 96& 2& 1 \\
     28 &5& 69& 5& 0& 69 & 5& 67& 0& 5& 110 & 3& 90& 0& 3 \\
     29 &5& 67& 2& 3& 70 & 4& 86& 3& 1& 111 & 3& 99& 2& 1 \\
     30 &4& 70& 4& 0& 71 & 4& 82& 0& 4& 112 &  3& 90& 2& 1 \\
     31 &5& 55& 3& 2& 72 & 4& 87& 0& 4& 113 & 3& 84& 2& 1 \\
     32 &4& 69& 3& 1& 73 & 4& 75& 0& 4& 114 & 3& 84& 3& 0 \\
     33 &5& 66& 3& 2& 74 & 3& 100& 0& 3& 115 & 2& 105& 1& 1 \\
     34 &4& 78& 3& 1& 75 & 6& 64& 2& 4& 116 & 2& 129& 0& 2 \\
     35 &4& 78& 3& 1& 76 & 7& 56& 2& 5& 117 & 3& 93& 0& 3 \\
     36 &4& 81& 2& 2& 77 & 8& 49& 2& 6& 118 & 2& 105& 2& 0 \\
     37 &3& 87& 2& 1& 78 & 6& 63& 2& 4& 119 &4& 81& 0& 4 \\
     38 &4& 81& 1& 3& 79 & 5& 72& 2& 3& 120 &3& 90& 0& 3 \\
     39 &5& 66& 5& 0& 80 & 5& 70& 2& 3& 121 &2& 111& 2& 0 \\
     40 &6& 51& 2& 4& 81 & 4& 76& 2& 2& 122 &2& 105& 0& 2 \\
     41 &3& 90& 3& 0& 82 & 4& 90& 2& 2& 123 & 2& 100& 2& 0 \\
      & & & & & & & & & & 124 & 1 & 126 & 1 & 0
    \end{array}
      }  \]
        \caption{Rank of the map \eqref{eq:beta} for
         anti-canonical hypersurfaces $X\subset Y$ in smooth toric Fano fourfolds $Y$, and $d:= h^1(\breve{L}_\RR,\ZZ_2) - h^{1,1}(X)$.}
        \label{tab:fano_fourfolds}
    \end{table}
\end{proof}

Note that for all real Lagrangians $\breve{L}_\RR \subset \breve{X}$ which appear in Table~\ref{tab:fano_fourfolds}, we have that 
    \[
    h^{1,1}(\breve{X}) \leq h^1(\breve{L}_\RR,\ZZ_2) \leq h^{1,1}(\breve{X}) + h^{1,2}(\breve{X}).
    \]
    These inequalities also follow from \cite[Thm 1]{CBM} if there is no $2$-torsion in the homology of $X$ with $\ZZ$ coefficients.
    Moreover, if $\rank \beta = 0$, then $h^1(\breve{L}_\RR,\ZZ_2) = h^{1,1}(\breve{X}) + h^{1,2}(\breve{X})$, while if $d = 0$ we have that $h^1(\breve{L}_\RR,\ZZ_2) = h^{1,1}(\breve{X})$.



\section{Heegaard splittings}
\label{sec:heegaard_splitting}

In the proof of Proposition~\ref{pro:orientable} we introduced a Heegaard splitting to show that $\breve{\shL}_\RR$ is an orientable $3$-manifold. We describe an algorithm to compute $\pi_1(\breve{\shL}_\RR)$ using this Heegaard splitting and adapt this algorithm in \S\ref{sec:application} to verify calculations made in the proof of Theorem~\ref{thm:cech_to_derived}.

Fix an integral affine manifold $B$, of real dimension $3$, with simple singularities and denote by $\breve{\pi} \colon \breve{\shL}_\RR \to B$ denote the $7$-to-$1$ cover, constructed
analogously as in \S\ref{sec:real_locus}. We let $W_1 \subset B$ be the thickening of the discriminant locus $\Delta$ of $B$ described in the proof of Proposition~\ref{pro:orientable}. In particular, we form a cover of $\Delta \subset B$ by fixing closed subsets $W_v$ and $W_e$ of $B$ for each vertex $v$ and edge $e$ of $\Delta$ which satisfy the conditions given in the proof of Proposition~\ref{pro:orientable} and let
\[
W_1 = \bigcup_v W_v \cup \bigcup_e W_e.
\]


Following a similar topological analysis to that made in \cite[Appendix~A]{AP19}, the preimages of $\breve{\pi}^{-1}(W_v)$ and $\breve{\pi}^{-1}(W_e)$ are disjoint unions of $3$-balls for all vertices $v$ and edges $e$ of $\Delta$. We define $W_2$ to be the closure of the complement of $W_1$ in $B$ and assume throughout this section that $W_2$, and hence $\breve{\pi}^{-1}(W_2)$, is a handlebody, see Remark~\ref{rem:assumption}.


To describe the fundamental group (or first homology group) of $\breve{\shL}_\RR$ we make use of the following elementary observation on the fundamental groups of $3$-manifolds with a Heegaard splitting.

\begin{lemma}
	\label{lem:attaching_discs}
	Given a $3$-manifold $X$ with a genus $g$ Heegaard splitting into (compact) handlebodies $H_1$ and $H_2$, $\pi_1(X)$ is generated by the free group $\pi_1(H_1)$, with relations determined by a collection of meridian discs $\{D_1,\ldots,D_k\}$ of $H_2$ such that the complement of $\bigcup^k_{i=1}{D_i}$ in $H_2$ is a disjoint union of $3$-balls.
\end{lemma}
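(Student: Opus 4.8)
The plan is to apply the Seifert--van Kampen theorem to the Heegaard decomposition $X = H_1 \cup_\Sigma H_2$, where $\Sigma = \partial H_1 = \partial H_2$ is the genus $g$ Heegaard surface, and then to simplify the resulting presentation using the handle structure of $H_2$. First I would recall that $H_1$ is a handlebody, so $\pi_1(H_1)$ is free of rank $g$, with generators $x_1,\ldots,x_g$ that may be taken to be the cores of the $1$-handles; similarly $\pi_1(\Sigma)$ is the surface group on $2g$ generators. Choosing a basepoint on $\Sigma$, van Kampen gives
\[
\pi_1(X) \cong \pi_1(H_1) \ast_{\pi_1(\Sigma)} \pi_1(H_2),
\]
so $\pi_1(X)$ is generated by $\pi_1(H_1)$ together with $\pi_1(H_2)$, amalgamated over the images of $\pi_1(\Sigma)$. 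Since $\pi_1(\Sigma) \to \pi_1(H_1)$ is surjective (every loop on the boundary of a handlebody is homotopic, inside the handlebody, to a loop in its spine), no new generators are contributed by $\pi_1(\Sigma)$ or $\pi_1(H_2)$ beyond those of $\pi_1(H_1)$; the effect of amalgamating with $\pi_1(H_2)$ is exactly to impose the relations $\ker(\pi_1(\Sigma) \to \pi_1(H_2))$, pushed forward to words in $x_1,\ldots,x_g$.

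Next I would identify that kernel concretely using the meridian discs. By hypothesis $\{D_1,\ldots,D_k\}$ is a collection of properly embedded discs in $H_2$ whose complement $H_2 \setminus \bigcup_i N(D_i)$ is a disjoint union of $3$-balls; equivalently, cutting $H_2$ along these discs yields balls, so $\{D_1,\ldots,D_k\}$ is a (possibly redundant) complete meridian system for $H_2$. A standard fact about handlebodies is that $\ker(\pi_1(\Sigma) \to \pi_1(H_2))$ is normally generated by the boundary curves $\partial D_1,\ldots,\partial D_k$: indeed a loop in $\Sigma$ bounds in $H_2$ if and only if, after cutting $H_2$ along the $D_i$ into balls, the loop becomes null-homotopic, and this is controlled precisely by how many times (with sign) it crosses each disc. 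Transporting each class $[\partial D_i] \in \pi_1(\Sigma)$ to $\pi_1(H_1)$ via the surjection $\pi_1(\Sigma) \to \pi_1(H_1)$ yields words $w_i(x_1,\ldots,x_g)$, and the amalgamated product collapses to
\[
\pi_1(X) \cong \langle\, x_1,\ldots,x_g \;\mid\; w_1,\ldots,w_k \,\rangle,
\]
which is the assertion of the lemma. (For the application to $H_1$ one only needs the induced first homology statement, obtained by abelianising.)

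The step I expect to be the main obstacle is the claim that cutting $H_2$ along the $D_i$ into $3$-balls implies $\ker(\pi_1(\Sigma)\to\pi_1(H_2))$ is \emph{normally} generated by the $[\partial D_i]$ --- i.e.\ that no further relations are needed. The cleanest way to see this is to build $H_2$ from a collection of $0$-handles (the balls obtained after cutting) by re-attaching $1$-handles dual to the discs $D_i$: the resulting handle decomposition has no $2$- or $3$-handles touching $\Sigma$ in a way that imposes relations, and van Kampen applied handle-by-handle shows each reattachment of a $1$-handle along $\partial D_i$ either merges two free factors or adds a generator, never a relation, while the inclusion $\Sigma \hookrightarrow H_2$ records exactly the relations $\partial D_i$. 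One should be slightly careful that the $D_i$ need not be disjoint or nonseparating and the complement may be several balls, so the bookkeeping is via the dual graph of the decomposition; but since we only claim a generating set of relations (not a minimal presentation), this combinatorial care suffices and no new subtlety arises.
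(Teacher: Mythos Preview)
Your argument is correct, but it takes a different route from the paper's. You apply van Kampen once to the whole splitting $X = H_1 \cup_\Sigma H_2$, obtain the pushout $\pi_1(H_1) \ast_{\pi_1(\Sigma)} \pi_1(H_2)$, and then use surjectivity of both boundary inclusions to reduce this to $\pi_1(H_1)$ modulo the normal closure of $\phi_1(\ker(\pi_1(\Sigma)\to\pi_1(H_2)))$; the remaining work is your ``main obstacle'', namely showing that this kernel is normally generated by the $[\partial D_i]$, which you handle by rebuilding $H_2$ from balls and $1$-handles dual to the $D_i$. The paper instead decomposes $H_2$ directly as the discs $D_1,\ldots,D_k$ together with the $3$-balls $A_1,\ldots,A_{k-g+1}$ forming their complement, writes
\[
X = H_1 \cup \bigcup_i D_i \cup \bigcup_j A_j,
\]
discards the $3$-balls (since $\pi_1$ depends only on the $2$-skeleton), and then applies van Kampen $k$ times, once for each $2$-cell $D_i$, to see that each attachment kills exactly $[\partial D_i]$ in $\pi_1(H_1)$. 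This sidesteps your main obstacle entirely: there is no need to identify $\ker(\pi_1(\Sigma)\to\pi_1(H_2))$ or to invoke its normal generation, because the cell-by-cell attachment makes the relations manifest. Your approach is the more structural one and makes the role of the Heegaard surface explicit; the paper's is shorter and more elementary. (A small notational point: since $\pi_1(\Sigma)\to\pi_1(H_i)$ is not injective, the van Kampen output is a pushout rather than an amalgamated free product in the strict sense, though this does not affect your argument.)
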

\begin{proof}
	Writing the interior of $H_2$ as the union of $k$ disks $D_1,\ldots D_k$ and $(k-g+1)$ three-dimensional balls $A_1, \ldots, A_{k-g+1}$, we have that 
	\[
	X = H_1 \cup \bigcup^{k-g+1}_{i=1}A_i \cup \bigcup^k_{i=1}D_i.
	\]
	Recalling that the fundamental group of a CW complex is determined by its $2$-skeleton, removing the $3$-balls $A_i$ does not affect the fundamental group, and $\pi_1(X)$ is isomorphic to
	\[
	\pi_1\big( H_1 \cup \bigcup^k_{i=1}D_i\big).
	\]
	Applying the Seifert--van Kampen theorem $k$ times, $\pi_1(X)$ is isomorphic to the quotient $\pi_1(H_1)$ by the classes of the loops $\partial D_i \subset H_1$ for $i \in \{1,\ldots,k\}$.
\end{proof}

Applying Lemma~\ref{lem:attaching_discs}, we describe the fundamental group of $\breve{\shL}_\RR$ by constructing a generating set of $\pi_1(\breve{\pi}^{-1}(W_2))$ and a collection of meridian discs in $\breve{\pi}^{-1}(W_1)$ whose complement is a disjoint union of $3$-balls. Recalling that we have assumed the space $W_2$ is a handlebody, we fix a cover $\shV$ of $W_2$ by $3$-balls which meet along a pairwise disjoint collection of discs. 

\begin{algorithm}
    \label{alg:fundamental_group}
	We first construct a generating set of $\pi_1(\breve{\pi}^{-1}(W_2))$. Observe that the space $W_2$ retracts onto an embedded graph $\Gamma$ in $B \setminus \Delta$ such that:
	\begin{enumerate}
		\item $\Gamma$ contains a vertex $v$ for each $3$-ball in $\shV$, contained in its interior. 
		\item $\Gamma$ contains an edge $e \subset V_1 \cup V_2$ for every pair of elements $V_1$ and $V_2$ of $\shV$ which intersect in a disc. This edge connects the vertices $v_1$ and $v_2$ corresponding to $V_1$ and $V_2$ respectively and passes through a single point in $V_1 \cap V_2$.
	\end{enumerate}
	
	Note that, as $\breve{\pi}$ is unbranched over $W_2$, $\breve{\pi}^{-1}(\Gamma)$ is a $7$-to-$1$ covering of $\Gamma$. We recall that any connected graph is homotopy equivalent to the wedge union of circles, obtained by contracting a spanning tree to a point. Hence, fixing a spanning tree $T$ in $\breve{\pi}^{-1}(\Gamma)$, a generating set of $\pi_1(\breve{\pi}^{-1}(\Gamma))$ is determined by choosing orientations of the edges of $\breve{\pi}^{-1}(\Gamma) \setminus T$, regarded as loops in the quotient space $\breve{\pi}^{-1}(\Gamma)/T$.
	
	Next we construct a disjoint collection of meridian disks in $\breve{\pi}^{-1}(W_1)$. Recall that, regarding $W_e$ as a solid cylinder thickening $e$, $\breve{\pi}^{-1}(W_e)$ is the disjoint union of five disjoint solid cylinders for any edge $e$ of $\Delta$. The preimage of a disk $D_e$ in $W_e$ which separates $W_e$ into two components is a set of five disks
	\[
	\breve{\pi}^{-1}(D_e) = \{D^i_e : i \in \{1,\ldots,5\}\}
	\]
	which separate each of the five cylinders $\breve{\pi}^{-1}(W_e)$ into two components. The complement of the union of all disks $D^i_e$ retracts onto $\bigcup_v{\breve{\pi}^{-1}(W_v)}$, which is a disjoint collection of $3$-balls. Hence the collection of all disks $D^i_e$ satisfies the conditions of Lemma~\ref{lem:attaching_discs}, and thus determines a set of relations for a presentation of $\pi_1(\breve{\shL}_\RR)$. These relations are obtained explicitly by constructing homotopies in $B \setminus \Delta$ from the circles $\partial D_e$ to cycles in $\Gamma$.
\end{algorithm}

\subsection{Application to the mirror quintic}
\label{sec:application}

We use the Heegaard splitting described above to verify part of the calculations made in the proofs of Theorem~\ref{thm:cech_to_derived} and Theorem~\ref{thm:cech_to_derived_Z2}. In particular, we compute the cohomology groups of the complex $C^\bullet$
\begin{equation}
	\label{eq:cech_complex}
	0 \lra \ZZ^{115} \stackrel{\delta_0}\lra \ZZ^{710} \stackrel{\delta_1}\lra \ZZ^{1190} \stackrel{\delta_2}\lra \ZZ^{595} \lra 0
\end{equation}
which appears in the proof of Theorem~\ref{thm:cech_to_derived}.

\begin{proposition}
	\label{pro:verify_cech_complex}
	The \u{C}ech complex $C^\bullet$ has cohomology groups
	\[
	H^i(C^\bullet) \cong 
	\begin{cases}
	\ZZ & i \in \{0,3\},\\
	0 & i = 1,\\
	\ZZ_2 & i=2.
	\end{cases}
	\]
\end{proposition}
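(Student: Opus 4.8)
\emph{Set-up.}
Since $\breve{\pi}$ is a finite map, $(\breve{\pi}_\star\ZZ)(V)=\ZZ\bigl(\breve{\pi}^{-1}(V)\bigr)$ for every open $V\subseteq B$, so the complex $C^\bullet$ is, termwise and compatibly with the differentials, the \u{C}ech cochain complex $\breve{C}^\bullet(\tilde{\U},\ZZ)$ of the constant sheaf $\ZZ$ on $\breve{\shL}_\RR$ associated to the open cover $\tilde{\U}:=\{\breve{\pi}^{-1}(U):U\in\U\}$; hence $H^i(C^\bullet)\cong\breve{H}^i(\tilde{\U},\ZZ)$. The plan is to compute these \u{C}ech groups by hand, using the Heegaard splitting $\breve{\shL}_\RR=\breve{\pi}^{-1}(W_1)\cup\breve{\pi}^{-1}(W_2)$ of \S\ref{sec:heegaard_splitting} to organise the combinatorics in place of the Magma computation in the proof of Theorem~\ref{thm:cech_to_derived}. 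As a useful reduction, refining $\tilde{\U}$ to its connected components does not change the \u{C}ech complex with constant coefficients, and by Lemmas~\ref{lem:preimage_face}, \ref{lem:topology_for_edges} and \ref{lem:topology_for_intersections} each resulting piece, and each nonempty finite intersection of such pieces, is contractible, a wedge of circles, or a sub-complex of one of the spaces $\breve{\pi}^{-1}(\U_{\sigma^2})$. In particular $H^0(C^\bullet)\cong\ZZ$, since $\breve{\shL}_\RR$ is connected.

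\emph{Degrees $0$, $1$ and $3$.}
Consider the \u{C}ech-to-derived spectral sequence $E_2^{p,q}=\breve{H}^p(\tilde{\U},\underline{H}^q)\Rightarrow H^{p+q}(\breve{\shL}_\RR,\ZZ)$ for this cover; under the identification above it is the spectral sequence of Theorem~\ref{thm:cech_to_derived}, so it degenerates at $E_2$. Rationally the presheaves $\underline{H}^q$ vanish for $q\ge 2$: the only cover pieces with nonzero $H^{\ge 2}$ are the spaces $\breve{\pi}^{-1}(\U_{\sigma^2})$, whose $H^2$ is $2$-torsion by Lemma~\ref{lem:preimage_face} and whose higher cohomology vanishes as they are noncompact $3$-manifolds, so $H^n(\breve{\shL}_\RR,\QQ)\cong(E_2^{n,0}\oplus E_2^{n-1,1})\otimes\QQ$ for all $n$. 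Running Algorithm~\ref{alg:fundamental_group} on the explicit Heegaard diagram of $\breve{\shL}_\RR$ produces a finite presentation of $\pi_1(\breve{\shL}_\RR)$; abelianising and checking that the resulting group $H_1(\breve{\shL}_\RR,\ZZ)$, the cokernel of the meridian-relation matrix, is finite shows $\breve{\shL}_\RR$ is a rational homology sphere. Hence $E_2^{1,0}\otimes\QQ=E_2^{2,0}\otimes\QQ=0$, i.e.\ $H^1(C^\bullet)$ and $H^2(C^\bullet)$ have rank $0$, and since $C^\bullet$ has vanishing Euler characteristic (recorded in the proof of Theorem~\ref{thm:cech_to_derived}), $H^3(C^\bullet)$ has rank $1$. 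Now $H^1(\breve{\shL}_\RR,\ZZ)$ is torsion free of rank $0$, hence zero, and the edge homomorphism $H^1(C^\bullet)\hookrightarrow H^1(\breve{\shL}_\RR,\ZZ)$ is injective, so $H^1(C^\bullet)=0$. Finally $H^3(C^\bullet)=E_2^{3,0}=E_\infty^{3,0}$ is the deepest filtration step $F^3H^3(\breve{\shL}_\RR,\ZZ)\subseteq H^3(\breve{\shL}_\RR,\ZZ)$, and since $\breve{\shL}_\RR$ is a closed connected orientable $3$-manifold (Proposition~\ref{pro:orientable}) we have $H^3(\breve{\shL}_\RR,\ZZ)\cong\ZZ$; a rank-$1$ subgroup of $\ZZ$ is all of $\ZZ$, so $H^3(C^\bullet)\cong\ZZ$.

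\emph{Degree $2$: the main obstacle.}
By the above $H^2(C^\bullet)$ is a finite group. Applying the universal coefficient theorem to the free complex $C^\bullet$, comparing with the mod $2$ computation of Theorem~\ref{thm:cech_to_derived_Z2} (whose bottom row is $\ZZ_2$ in every degree), and using $H^1(C^\bullet)=0$ and the fact that $H^3(C^\bullet)\cong\ZZ$ is torsion free, one finds that $H^2(C^\bullet)$ has exactly one cyclic summand of $2$-power order, i.e.\ its $2$-part is $\ZZ_{2^a}$ for some $a\ge 1$. Showing $a=1$ and excluding odd torsion requires an honest computation of the middle cohomology $\ker\delta_2/\operatorname{im}\delta_1$ of $C^\bullet$, and this is the step where the Heegaard splitting does the real work: over $W_1$ (a thickening of $\Delta$) the relevant cover pieces pull back to disjoint unions of $3$-balls, and over $W_2$ the preimage $\breve{\pi}^{-1}(W_2)$ is a handlebody with $H^{\ge 2}=0$, so the Mayer--Vietoris bookkeeping along $\breve{\pi}^{-1}(W_1\cap W_2)$, combined with the explicit monodromy of $\breve{\pi}$ from \S\ref{sec:real_locus}, should express $\breve{H}^2(\tilde{\U},\ZZ)$ as the cokernel of a small integer matrix localised near the vertices of $\Delta$ --- the hand-sized analogue of the Magma step in Theorem~\ref{thm:cech_to_derived} --- which I expect to evaluate to $\ZZ_2$. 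The delicate point, and the main obstacle, is exactly this gluing: keeping track of which of the seven sheets of $\breve{\pi}$ are identified as one crosses from $W_2$ into each ball of the thickening $W_1$, so that the resulting matrix is both correct and small enough to put in Smith normal form directly.
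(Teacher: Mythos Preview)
Your proposal misses the central idea of the paper's argument and, as a result, does not close. The purpose of Proposition~\ref{pro:verify_cech_complex} is to give an \emph{independent} verification of the bottom row of the $E_2$ page computed by Magma in Theorem~\ref{thm:cech_to_derived}. Your argument instead feeds the spectral sequence of Theorem~\ref{thm:cech_to_derived} back into the computation of $C^\bullet$: you invoke its degeneration at $E_2$, but in the paper that degeneration is deduced from the fact that $E_2^{3,0}$ is torsion free, which is precisely the content of $H^3(C^\bullet)\cong\ZZ$ that you are trying to establish. The same circularity affects your use of the bottom row of Theorem~\ref{thm:cech_to_derived_Z2} in the degree~$2$ step, since that row is obtained in the paper from the integral computation. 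Even setting circularity aside, your argument for $H^3(C^\bullet)$ is incomplete: without degeneration you only know that $E_\infty^{3,0}$ is a quotient of $E_2^{3,0}=H^3(C^\bullet)$ by a torsion subgroup and embeds in $H^3(\breve{\shL}_\RR)\cong\ZZ$, which shows $H^3(C^\bullet)/\mathrm{tors}\cong\ZZ$ but does not exclude torsion. (Also, ``a rank-$1$ subgroup of $\ZZ$ is all of $\ZZ$'' is false as stated; you mean it is abstractly isomorphic to $\ZZ$.) And as you yourself note, the degree~$2$ case is left as an obstacle.

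The idea you are missing is that $C^\bullet$, being built only from the presheaf $\shH^0$ of connected components, does not see the internal topology of the preimages $\breve{\pi}^{-1}(U_\sigma)$. The paper exploits this by replacing $\breve{\pi}$ with a much simpler branched cover $\tilde{\pi}\colon\tilde{\shL}_\RR\to\partial\Delta_{\PP^4}$, branched over a coarser trivalent graph $\tilde{\Delta}$ (one hexagon per $2$-face rather than six), whose preimages over the elements of $\U$ have the \emph{same} connected components as those of $\breve{\pi}$. Hence $C^\bullet\cong\breve{C}^\bullet(\U,\tilde{\pi}_\star\ZZ)$. For $\tilde{\pi}$ the cover $\U$ is Leray, so $H^i(C^\bullet)\cong H^i(\tilde{\shL}_\RR,\ZZ)$ on the nose, with no spectral sequence bookkeeping. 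One then runs Algorithm~\ref{alg:fundamental_group} on $\tilde{\shL}_\RR$, whose Heegaard splitting is dramatically smaller than that of $\breve{\shL}_\RR$ (the graph $\Gamma$ has $2$ vertices and $12$ edges, and $24$ meridian discs suffice), obtaining $H_1(\tilde{\shL}_\RR,\ZZ)\cong\ZZ_2$; Poincar\'e duality and the universal coefficient theorem then give all four groups at once. Your Heegaard computation is aimed at the wrong space: applying it to $\breve{\shL}_\RR$ gives $H_1(\breve{\shL}_\RR)$, not $H^i(C^\bullet)$.
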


Before proving Proposition~\ref{pro:verify_cech_complex}, we show that $C^\bullet$ computes the cohomology groups of a topological space $\tilde{\shL}_\RR$. To define $\tilde{\shL}_\RR$ we first fix a trivalent graph $\tilde{\Delta}$ in $\partial\Delta_{\PP^4}$, by shrinking the discriminant locus as illustrated in Figure \ref{fig:shrinking delta}.
	\begin{figure}
	    \centering
	    \includegraphics[scale=0.55]{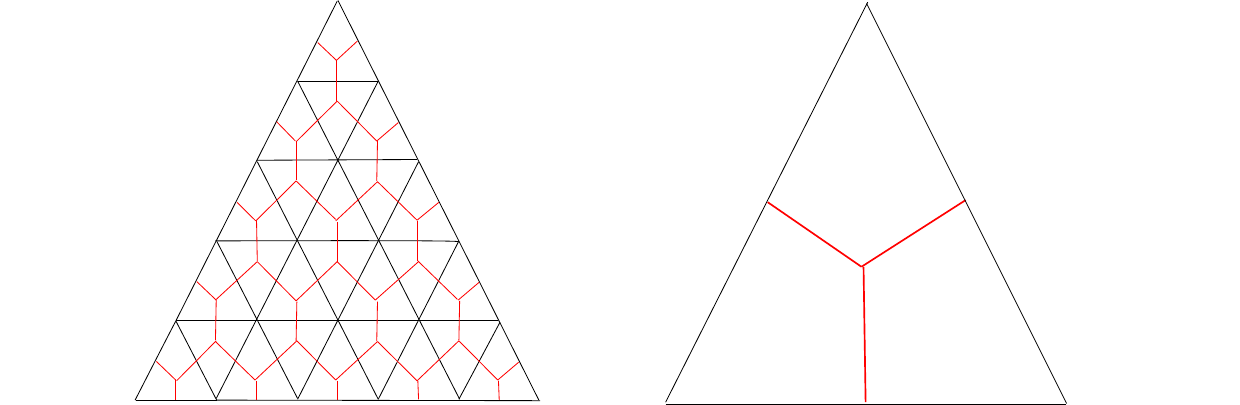}
	    \caption{The discriminant locus $\Delta$ (on the left), and its shrinking to $\tilde{\Delta}$ (on the right) on a triangular face.}
	    \label{fig:shrinking delta}
	\end{figure}
More concretely, $\tilde{\Delta}$ is the union of the cells of the first barycentric subdivison of $\partial\Delta_{\PP^4}$ which are contained in a two-dimensional face of $\Delta_{\PP^4}$ and which do not contain a vertex of $\Delta_{\PP^4}$. Part of $\tilde{\Delta}$ is illustrated in Figure~\ref{fig:heegaard_application}, in which one vertex of $ \Delta_{\PP^4}$ is taken to be at infinity. We define $\tilde{\shL}_\RR$ by fixing a $7$-to-$1$ cover 
\[
\tilde{\pi} \colon \tilde{\shL}_\RR \to \partial \Delta_{\PP^4}
\]
branched over $\tilde{\Delta}$. In particular, we recall that $\partial \Delta_{\PP^4}$ admits an integral affine structure with (non-simple) singularities along $\tilde{\Delta}$, see for example \cite[Proposition~$2.12$]{GrossBB}. Hence we can define a $7$-to-$1$ cover $\tilde{\pi}_0$ over the complement of $\tilde{\Delta}$, following the construction given in \S\ref{sec:real_locus}. While this integral affine structure is not simple, the monodromy of $\tilde{\pi}_0$ around segments of $\tilde{\Delta}$ is identical to that described in \S\ref{sec:real_locus}. That is, we can extend the covering $\tilde{\pi}_0$ to a branched covering $\tilde{\pi}$ over $\partial\Delta_{\PP^4}$ using the local models around positive and negative vertices described in \S\ref{sec:real_locus}.



\begin{lemma}
    The cohomology groups $H^i(C^\bullet)$ are isomorphic to the cohomology groups $H^i(\tilde{\shL}_\RR,\ZZ)$ for all $i \in \{0,1,2,3\}$.
\end{lemma}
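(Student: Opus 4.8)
The complex $C^\bullet$ is, by its construction in the proof of Theorem~\ref{thm:cech_to_derived}, the bottom row $\breve{C}^\bullet(\U,\shH^0_\ZZ)$ of the $E_1$ page of the \u{C}ech-to-derived spectral sequence for $\breve{\pi}_\star\ZZ$, so $H^i(C^\bullet)=E_2^{i,0}=\breve{H}^i(\U,\breve{\pi}_\star\ZZ)$. This spectral sequence converges to $H^\bullet(\breve{\shL}_{\RR},\ZZ)$ and, by Theorem~\ref{thm:cech_to_derived}, degenerates at $E_2$; hence each $H^n(\breve{\shL}_{\RR})$ is filtered with graded pieces the terms $E_2^{p,n-p}$, and the bottom step is the image of the edge homomorphism $H^n(C^\bullet)=E_2^{n,0}\to H^n(\breve{\shL}_{\RR})$. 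The plan is to read this edge map off the explicit $E_2$ page degree by degree.

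I would first treat the degrees $i\in\{0,1,3\}$, where the only nonzero term on the anti-diagonal of the $E_2$ page of Theorem~\ref{thm:cech_to_derived} is the bottom-row term $E_2^{i,0}$: indeed $E_2^{0,0}\cong\ZZ$; $E_2^{1,0}=0$ together with $E_2^{0,1}=0$; and $E_2^{3,0}\cong\ZZ$ together with $E_2^{2,1}=E_2^{1,2}=E_2^{0,3}=0$. Since the sequence degenerates at $E_2$, the filtration on $H^i(\breve{\shL}_{\RR})$ has a single nonzero graded piece in these degrees, so the edge homomorphism is an isomorphism and $H^i(C^\bullet)\cong H^i(\breve{\shL}_{\RR},\ZZ)$ for $i\in\{0,1,3\}$, recovering $\ZZ,0,\ZZ$ in agreement with Corollary~\ref{Cor: integer cohomology}.

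The delicate case, and the main obstacle, is degree $2$. Here the anti-diagonal carries $E_2^{0,2}\cong\ZZ_2^{60}$ and the large $2$-group $E_2^{1,1}$ in addition to $E_2^{2,0}\cong\ZZ_2$, so by Corollary~\ref{Cor: integer cohomology} the group $H^2(\breve{\shL}_{\RR})\cong T$ is a nonsplit extension with $|T|\gg 2$; the edge homomorphism realises $H^2(C^\bullet)\cong\ZZ_2$ as the bottom filtration step of $T$ rather than as all of $T$. Thus a clean isomorphism in every degree forces the target to be not $\breve{\shL}_{\RR}$ but the auxiliary branched cover $\tilde{\pi}\colon\tilde{\shL}_{\RR}\to\partial\Delta_{\PP^4}$ over the coarse trivalent graph $\tilde{\Delta}$: its local monodromy agrees with that of $\breve{\pi}$, but its discriminant omits the fine negative and positive vertices of $\Delta$ that produce the higher rows $E_2^{0,2}$ and $E_2^{1,1}$. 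Concretely, I would build a cover $\tilde{\U}$ of $\partial\Delta_{\PP^4}$ combinatorially identical to $\U$ yet adapted to $\tilde{\Delta}$, check that the preimages $\tilde{\pi}^{-1}(U)$ and their intersections are disjoint unions of contractible pieces so that $\tilde{\U}$ is acyclic for $\tilde{\pi}_\star\ZZ$, and verify that the connected-component count reproduces the ranks $115,710,1190,595$, yielding a chain isomorphism $\breve{C}^\bullet(\tilde{\U},\tilde{\pi}_\star\ZZ)\cong C^\bullet$. Acyclicity collapses that spectral sequence onto its bottom row, giving $H^i(C^\bullet)\cong H^i(\tilde{\shL}_{\RR},\ZZ)$ for all $i$, with $H^2(\tilde{\shL}_{\RR})\cong\ZZ_2$. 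In summary, the statement holds verbatim in degrees $0,1,3$; in degree $2$ it holds after replacing $\breve{\shL}_{\RR}$ by the simplified model $\tilde{\shL}_{\RR}$, and this is the version that suffices to verify the bottom-row computation in Theorem~\ref{thm:cech_to_derived} and Proposition~\ref{pro:verify_cech_complex}.
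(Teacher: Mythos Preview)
You are right that the lemma, taken literally with $\breve{\shL}_\RR$, is false in degree $2$: by Corollary~\ref{Cor: integer cohomology} the group $H^2(\breve{\shL}_\RR)$ has order $2^{101}$, while $H^2(C^\bullet)\cong\ZZ_2$. This is a typo in the paper; the intended target is the auxiliary space $\tilde{\shL}_\RR$, exactly as you diagnosed, and the paper's own proof in fact concludes with $H^i(\tilde{\shL}_\RR,\ZZ)$.

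Your proposed argument for the corrected statement---realising $C^\bullet$ as the \u{C}ech complex of $\tilde{\pi}_\star\ZZ$ with respect to a Leray (acyclic) cover, and then using that the fibres of $\tilde{\pi}$ are discrete to identify \u{C}ech cohomology with $H^\bullet(\tilde{\shL}_\RR,\ZZ)$---is precisely the paper's approach. One small simplification: the paper uses the \emph{same} cover $\U$ rather than constructing a new $\tilde{\U}$. Since $\shH^0$ only records connected components of preimages, and these coincide for $\breve{\pi}$ and $\tilde{\pi}$ over every element and intersection in $\U$, one obtains $\breve{C}^i(\U,\tilde{\pi}_\star\ZZ)\cong C^i$ directly; the higher presheaves $\shH^j$, $j\geq 1$, for $\tilde{\pi}$ vanish on $\U$ because $\tilde{\Delta}$ meets each $\U_\sigma$ so simply that all preimages are disjoint unions of contractible pieces. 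Your separate treatment of degrees $0,1,3$ via the $E_2$ page of Theorem~\ref{thm:cech_to_derived} is correct but redundant once the $\tilde{\shL}_\RR$ argument handles all degrees uniformly.
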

\begin{proof}
    Consider the open cover $\U$ used to define the \u{C}ech-to-derived spectral sequence used in Theorem~\ref{thm:cech_to_derived}. By construction, and comparing preimages of elements of the open cover $\U$ (and their intersections) under $\breve{\pi}$ and $\tilde{\pi}$, we observe that
    \begin{equation}
    \label{Eq cech equivalence}
    \breve{C}^i(\U,\tilde{\pi}_\star\ZZ) \cong \breve{C}^i(\U,\breve{\pi}_\star\ZZ) = C^i
    \end{equation}
    for all $i \in \{0,1,2,3\}$, and so the cohomology groups 
    $H^i(C^\bullet)$ are isomorphic to the 
    \u{C}ech cohomology groups $\breve{H}^i(\U,\tilde{\pi}_\star\ZZ)$. Moreover, the open cover $\U$ is acylic for the sheaf $\tilde{\pi}_\star\ZZ$. Hence the \u{C}ech cohomology groups $\breve{H}^i(\U,\tilde{\pi}_\star\ZZ)$ are isomorphic to the sheaf cohomology groups $H^i(B,\tilde{\pi}_\star\ZZ)$. Finally, the sheaf cohomology groups 
    $H^i(B,\tilde{\pi}_\star\ZZ)$ are isomorphic to the cohomology groups $H^i(\tilde{\shL}_\RR,\ZZ)$ by the Leray spectral sequence
    for $\tilde{\pi}$ and using that the fibres of $\tilde{\pi}$ are discrete.

    
\end{proof}

\begin{proof}[Proof of Proposition~\ref{pro:verify_cech_complex}]

	We first fix handlebodies $W_1$ and $W_2$ as in Algorithm~\ref{alg:fundamental_group}. We fix disks $D_i$, for $i \in \{1,\ldots, 12\}$ in $\partial \Delta_{\PP^4}$, three of which are illustrated in Figure~\ref{fig:heegaard_application}. Recall that  $\partial \Delta_{\PP^4}$ is formed by $5$ tetrahedra, has ten $2$-faces, ten $1$-faces, and five vertices, among which one is at infinity, $\infty$. In particular, to a $2$-face spanned by $\{ i,j,k\} \in \{1,2,3,4\}$, as in Figure~\ref{fig:heegaard_application}, there are six other adjacent $2$-faces; three of them lie tetrahedra with vertices $\{ 1,2,3,4\}$ the other three on the tetrahedra with vertices $\{ i.j.k,\infty \}$. The disc $D_1$ in Figure~\ref{fig:heegaard_application}, is a disc containing the vertex labelled by $1$, and enclosing the region that is bounded by part of the discriminant locus lying on the $2$-faces spanned by $\{1,2,3\}$, $\{1,2, \infty\}$ and $\{1,3,\infty\}$. We choose the other discs $D_i$ analogously. Note that the complement of $W_1 \cup \bigcup_{i=1}^{12} D_i$ is a pair of $3$-balls $\shV := \{V_1,V_2\}$. Hence, in this case, the graph $\Gamma$ (as defined in Algorithm~\ref{alg:fundamental_group}) has two vertices and twelve edges between these two vertices, while $\tilde{\pi}^{-1}(\Gamma)$ contains $14$ vertices and $84$ edges. We fix a generating set of 
	\[
	H_1(\tilde{\pi}^{-1}(W_2)) \cong H_1(\tilde{\pi}^{-1}(\Gamma))
	\]
	by noting that a spanning tree $T$ in $\tilde{\pi}^{-1}(\Gamma)$ contains $13$ edges and identifying each of the $71$ edges of $\tilde{\pi}^{-1}(\Gamma) \setminus T$  with a generator of $H_1(\tilde{\pi}^{-1}(\Gamma)))$, as described in Algorithm~\ref{alg:fundamental_group}. In particular, we fix an isomorphism $H_1(\tilde{\pi}^{-1}(\Gamma)) \cong \ZZ^{71}$. Applying Lemma~\ref{lem:attaching_discs}, the space $H_1(\tilde{\shL}_\RR)$ is the quotient of $\ZZ^{71}$ by a subgroup determined by a system of meridian disks of $\tilde{\pi}^{-1}(W_1)$.

	\begin{figure}
	    \centering
	    \includegraphics[scale=0.55]{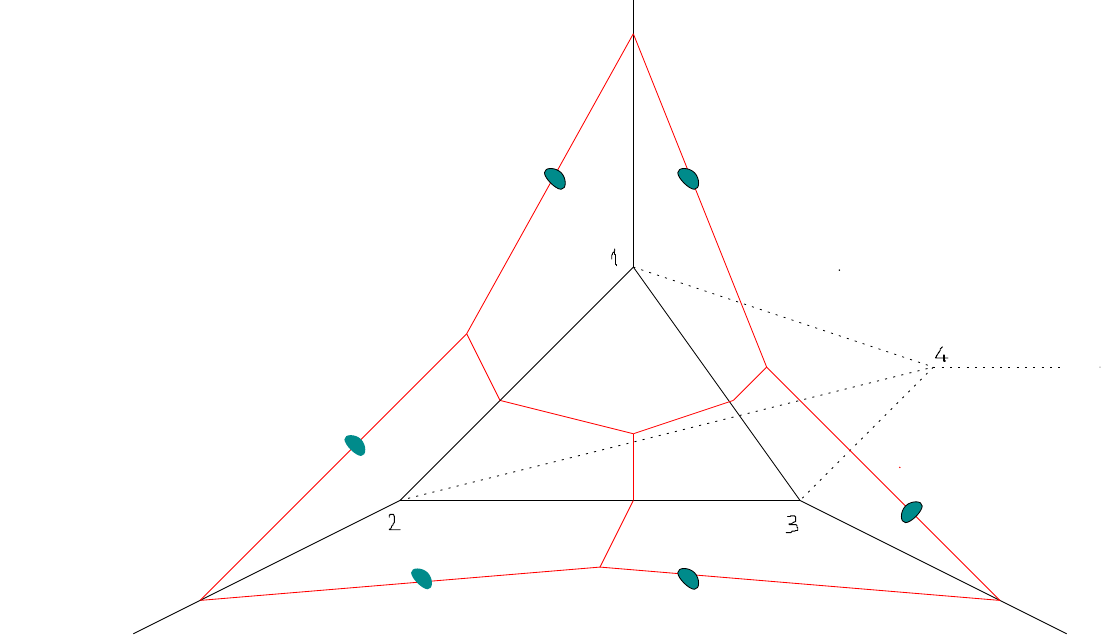}
	    \caption{Three of the discs $D_i$ enclosing regions labelled by $i$ for $i\in \{ 1,2,3 \}$, with boundaries on the discriminant locus, and 6 of the discs $D_e$ depicted in green.}
	    \label{fig:heegaard_application}
	\end{figure}
	
	As described in Algorithm~\ref{alg:fundamental_group}, each edge of $\tilde{\Delta}$ determines a collection of five disjoint meridian disks in $\tilde{\pi}^{-1}(W_1)$. We consider $24$ meridian discs $D_e$, associated to $24$ (of the total $30$) edges $e$ of $\tilde{\Delta}$, six of which are illustrated in Figure~\ref{fig:heegaard_application}. The preimage of the union of these $24$ disks is a disjoint union of $120$ disks $D^i_e$, where $i \in \{1,\ldots, 5\}$. It is straightforward to verify that the complement of these discs in $\tilde{\pi}^{-1}(W_1)$ is a disjoint collection of $3$-balls.
	
	Expressing an orientation of each cycle $\partial D^i_e \subset \tilde{\pi}^{-1}(W_2)$ in terms of the generating set of $H_1(\tilde{\pi}^{-1}(W_2)) \cong \ZZ^{71}$ described above, we can express $H_1(\tilde{\shL}_\RR)$ as the quotient of $\ZZ^{71}$ by a subgroup generated by $120$ elements. Following the computation of this matrix, source code for which is included in the supplementary material, this quotient group is $\ZZ_2$, and hence $H_1(\tilde{\shL}_\RR,\ZZ) \cong \ZZ_2$. Since $\tilde{\shL}_\RR$ is connected and orientable, $H^i(\tilde{\shL}_\RR) \cong \ZZ$ for $i \in \{0,3\}$. Moreover, by the universal coefficient theorem and Poincar\'e duality, we have that $H^1(\tilde{\shL}_\RR) \cong 0$ and $H^2(\tilde{\shL}_\RR) \cong \ZZ_2$.
\end{proof}

\bibliographystyle{plain}
\bibliography{bibliography}

\begin{thebibliography}{10}

\bibitem{AS}
H.~Arg{\"u}z.
\newblock Real loci in (log-) {C}alabi-{Y}au manifolds via {K}ato-{N}akayama
  spaces of toric degenerations.
\newblock \href{https://arxiv.org/abs/1610.07195}{\texttt{arXiv:1610.07195v3
  [math.AG]}}, 2016.

\bibitem{AP:Schoen}
H.~Arg{\"u}z and T.~Prince.
\newblock Cohomology of the fixed point locus of an anti-symplectic involution
  on {S}choen’s {C}alabi-{Y}au.
\newblock Preprint available on
  \href{https://lmv.math.cnrs.fr/wp-content/uploads/2020/09/Note-on-Schoens-CY.pdf}{the
  webpage of H.Arg\"uz}.

\bibitem{AP19}
H.~Arg{\"u}z and T.~Prince.
\newblock Real {L}agrangians in {C}alabi--{Y}au threefolds, \emph{to appear in
  the Proceedings of the London Mathematical Society}.
\newblock \href{https://arxiv.org/abs/1908.06685}{\texttt{arXiv:1908.06685
  [math.AG]}}, 2019.

\bibitem{DBranes09}
Paul~S. Aspinwall, Tom Bridgeland, Alastair Craw, Michael~R. Douglas, Anton
  Kapustin, Gregory~W. Moore, Mark Gross, Graeme Segal, Bal{\'a}zs
  Szendr{\"o}i, and P.~M.~H. Wilson.
\newblock {\em Dirichlet branes and mirror symmetry}, volume~4 of {\em Clay
  Mathematics Monographs}.
\newblock AMS, Providence, RI, 2009.

\bibitem{AM126}
Paul~S Aspinwall and M~Ronen Plesser.
\newblock General mirror pairs for gauged linear sigma models.
\newblock {\em Journal of High Energy Physics}, 2015(11):29, 2015.

\bibitem{Auroux}
Denis Auroux.
\newblock Lecture notes: {T}opics in {G}eometry, October 2009.
\newblock
  \href{https://math.berkeley.edu/~auroux/18.969-S09/mirrorsymm-lect6.pdf}{math.berkeley.edu/$\sim$auroux/18.969-S09/mirrorsymm-lect6.pdf}.

\bibitem{AM}
L.~Auslander and L.~Markus.
\newblock Holonomy of flat affinely connected manifolds.
\newblock {\em Ann. of Math. (2)}, 62:139--151, 1955.

\bibitem{A}
Louis Auslander.
\newblock The structure of complete locally affine manifolds.
\newblock {\em Topology}, 3(suppl. 1):131--139, 1964.

\bibitem{B99}
V.~V. Batyrev.
\newblock On the classification of toric {F}ano {$4$}-folds.
\newblock {\em J. Math. Sci. (New York)}, 94(1):1021--1050, 1999.
\newblock Algebraic geometry, 9.

\bibitem{BK}
Victor Batyrev and Maximilian Kreuzer.
\newblock Integral cohomology and mirror symmetry for {C}alabi-{Y}au 3-folds.
\newblock In {\em Mirror symmetry. {V}}, volume~38 of {\em AMS/IP Stud. Adv.
  Math.}, pages 255--270. Amer. Math. Soc., Providence, RI, 2006.

\bibitem{B}
Victor~V. Batyrev.
\newblock Dual polyhedra and mirror symmetry for {C}alabi-{Y}au hypersurfaces
  in toric varieties.
\newblock {\em J. Algebraic Geom.}, 3(3):493--535, 1994.

\bibitem{BB}
Victor~V. Batyrev and Lev~A. Borisov.
\newblock Mirror duality and string-theoretic {H}odge numbers.
\newblock {\em Invent. Math.}, 126(1):183--203, 1996.

\bibitem{BBci}
Victor~V. Batyrev and Lev~A. Borisov.
\newblock On {C}alabi-{Y}au complete intersections in toric varieties.
\newblock In {\em Higher-dimensional complex varieties ({T}rento, 1994)}, pages
  39--65. de Gruyter, Berlin, 1996.

\bibitem{bihan2003asymptotic}
F~Bihan.
\newblock Asymptotic behaviour of {B}etti numbers of real algebraic surfaces.
\newblock {\em Commentarii Mathematici Helvetici}, 78(2):227--244, 2003.

\bibitem{biss2004mod2}
Daniel Biss, Victor~W Guillemin, and Tara~S Holm.
\newblock The mod2 cohomology of fixed point sets of anti-symplectic
  involutions.
\newblock {\em Advances in mathematics}, 185(2):370--399, 2004.

\bibitem{Magma}
Wieb Bosma, John Cannon, and Catherine Playoust.
\newblock The {M}agma algebra system. {I}. {T}he user language.
\newblock {\em J. Symbolic Comput.}, 24(3-4):235--265, 1997.
\newblock Computational algebra and number theory (London, 1993).

\bibitem{grdb}
G.~Brown and A.~Kasprzyk.
\newblock Graded {R}ing {D}atabase.
\newblock \href{http://www.grdb.co.uk/}{\texttt{http://www.grdb.co.uk/}}.

\bibitem{CLS90}
P.~Candelas, M.~Lynker, and R.~Schimmrigk.
\newblock Calabi-{Y}au manifolds in weighted {$\mathbb{P}^4$}.
\newblock {\em Nuclear Phys. B}, 341(2):383--402, 1990.

\bibitem{COGP}
Philip Candelas, Xenia~C. de~la Ossa, Paul~S. Green, and Linda Parkes.
\newblock A pair of {C}alabi-{Y}au manifolds as an exactly soluble
  superconformal theory.
\newblock {\em Nuclear Phys. B}, 359(1):21--74, 1991.

\bibitem{CBM}
R.~Casta\~no Bernard and D.~Matessi.
\newblock The fixed point set of anti-symplectic involutions of {L}agrangian
  fibrations.
\newblock {\em Rend. Semin. Mat. Univ. Politec. Torino}, 68(3):235--250, 2010.

\bibitem{CBM3}
Ricardo Casta\~no Bernard and Diego Matessi.
\newblock Lagrangian 3-torus fibrations.
\newblock {\em J. Differential Geom.}, 81(3):483--573, 2009.

\bibitem{CBMS}
Ricardo Casta\~no Bernard, Diego Matessi, and Jake~P. Solomon.
\newblock Symmetries of {L}agrangian fibrations.
\newblock {\em Adv. Math.}, 225(3):1341--1386, 2010.

\bibitem{ChanSYZ}
Kwokwai Chan.
\newblock {The Strominger--{Y}au--{Z}aslow conjecture and its impact}.
\newblock {\em Adv. Lect. Math.}, 29(2):1183--1208, 2014.

\bibitem{D88}
Lance~J. Dixon.
\newblock Some world-sheet properties of superstring compactifications, on
  orbifolds and otherwise.
\newblock In {\em Superstrings, unified theories and cosmology 1987 ({T}rieste,
  1987)}, volume~4 of {\em ICTP Ser. Theoret. Phys.}, pages 67--126. World Sci.
  Publ., Teaneck, NJ, 1988.

\bibitem{finashin2019first}
Sergey Finashin and Viatcheslav Kharlamov.
\newblock First homology of a real cubic is generated by lines.
\newblock {\em arXiv preprint arXiv:1911.07008}, 2019.

\bibitem{GS99}
Robert~E. Gompf and Andr\'{a}s~I. Stipsicz.
\newblock {\em {$4$}-manifolds and {K}irby calculus}, volume~20 of {\em
  Graduate Studies in Mathematics}.
\newblock American Mathematical Society, Providence, RI, 1999.

\bibitem{GP90}
B.~R. Greene and M.~R. Plesser.
\newblock Duality in {C}alabi-{Y}au moduli space.
\newblock {\em Nuclear Phys. B}, 338(1):15--37, 1990.

\bibitem{GHJ}
M.~Gross, D.~Huybrechts, and D.~Joyce.
\newblock {\em Calabi-{Y}au manifolds and related geometries}.
\newblock Universitext. Springer-Verlag, Berlin, 2003.
\newblock Lectures from the Summer School held in Nordfjordeid, June 2001.

\bibitem{GrossSLagI}
Mark Gross.
\newblock Special {L}agrangian fibrations. {I}. {T}opology.
\newblock In {\em Winter {S}chool on {M}irror {S}ymmetry, {V}ector {B}undles
  and {L}agrangian {S}ubmanifolds ({C}ambridge, {MA}, 1999)}, volume~23 of {\em
  AMS/IP Stud. Adv. Math.}, pages 65--93. Amer. Math. Soc., Providence, RI,
  2001.

\bibitem{GrossTopology}
Mark Gross.
\newblock Topological mirror symmetry.
\newblock {\em Invent. Math.}, 144(1):75--137, 2001.

\bibitem{GrossBB}
Mark Gross.
\newblock Toric degenerations and {B}atyrev-{B}orisov duality.
\newblock {\em Math. Ann.}, 333(3):645--688, 2005.

\bibitem{GrossSYZ}
Mark Gross.
\newblock Mirror symmetry and the {S}trominger-{Y}au-{Z}aslow conjecture.
\newblock {\em arXiv preprint arXiv:1212.4220}, 2012.

\bibitem{GS1}
Mark Gross and Bernd Siebert.
\newblock Mirror symmetry via logarithmic degeneration data. {I}.
\newblock {\em J. Differential Geom.}, 72(2):169--338, 2006.

\bibitem{GS2}
Mark Gross and Bernd Siebert.
\newblock From real affine geometry to complex geometry.
\newblock {\em Ann. of Math. (2)}, 174(3):1301--1428, 2011.

\bibitem{HZci}
Christian Haase and Ilia Zharkov.
\newblock Integral affine structures on spheres: complete intersections.
\newblock {\em Int. Math. Res. Not.}, pages 3153--3167, 2005.

\bibitem{HT}
Tam{\'a}s Hausel and Michael Thaddeus.
\newblock Mirror symmetry, {L}anglands duality, and the {H}itchin system.
\newblock {\em Inventiones mathematicae}, 153(1):197--229, 2003.

\bibitem{itenberg-topology}
Ilia Itenberg.
\newblock {\em Topology of Real Algebraic T-surfaces}.
\newblock Revista Matem\'{a}tica de la Universidad Complutense de Madrid.,
  1997.

\bibitem{itenberg2019tropical}
Ilia Itenberg, Ludmil Katzarkov, Grigory Mikhalkin, and Ilia Zharkov.
\newblock Tropical homology.
\newblock {\em Mathematische Annalen}, 374(1-2):963--1006, 2019.

\bibitem{itenberg2006asymptotically}
Ilia Itenberg and Oleg Viro.
\newblock Asymptotically maximal real algebraic hypersurfaces of projective
  space.
\newblock In {\em Proceedings of G{\"o}kova Geometry/Topology conference},
  pages 91--105. Citeseer, 2006.

\bibitem{LVW89}
Wolfgang Lerche, Cumrun Vafa, and Nicholas~P. Warner.
\newblock Chiral rings in {$N=2$} superconformal theories.
\newblock {\em Nuclear Phys. B}, 324(2):427--474, 1989.

\bibitem{Supplementary}
T.M. Prince.
\newblock Cohomology groups of real {L}agrangians in
  {C}alabi--{Y}au--threefolds.
\newblock
  \url{https://github.com/T-Prince/Cohomology-groups-of-real-Lagrangians-in-Calabi-Yau-threefolds},
  2020.

\bibitem{renaudineau2018bounding}
Arthur Renaudineau and Kristin Shaw.
\newblock Bounding the {B}etti numbers of real hypersurfaces near the tropical
  limit.
\newblock {\em arXiv preprint arXiv:1805.02030}, 2018.

\bibitem{MR19}
H.~Ruddat and C.~Y. Mak.
\newblock Tropically constructed {L}agrangians in mirror quintic threefolds.
\newblock \href{https://arxiv.org/abs/1904.11780}{\texttt{arXiv:1904.11780
  [math.SG]}}, 2019.

\bibitem{RZ3}
H~Ruddat and I.~Zharkov.
\newblock Compactifying torus fibrations over integral affine manifolds with
  singularities.
\newblock \href{https://arxiv.org/abs/2003.08521}{\texttt{arXiv:2003.08521
  [math.AG]}}, 2020.

\bibitem{RZ1}
H~Ruddat and I.~Zharkov.
\newblock Tailoring a pair of pants.
\newblock \href{https://arxiv.org/abs/2001.08267}{\texttt{arXiv:2001.08267
  [math.AG]}}, 2020.

\bibitem{SageMath}
W.\thinspace{}A. Stein et~al.
\newblock {\em {S}age {M}athematics {S}oftware ({V}ersion 9.0)}.
\newblock The Sage Development Team, 2020.
\newblock {\tt http://www.sagemath.org}.

\bibitem{SYZ}
Andrew Strominger, Shing-Tung Yau, and Eric Zaslow.
\newblock Mirror symmetry is {$T$}-duality.
\newblock {\em Nuclear Phys. B}, 479(1-2):243--259, 1996.

\bibitem{Z00}
Ilia Zharkov.
\newblock Torus fibrations of {C}alabi-{Y}au hypersurfaces in toric varieties.
\newblock {\em Duke Math. J.}, 101(2):237--257, 2000.

\end{thebibliography}

\end{document}